\theoremstyle{plain}
\newtheorem{theorem}{Theorem}
\newtheorem{proposition}{Proposition}
\newtheorem{lemma}{Lemma}
\newtheorem{remark}{Remark}
\numberwithin{equation}{section}
\newcommand{\ignore}[1]{}{}
\def \R {\mathbb{R}}
\def \E {\mathbb{E}}
\def \P {\mathbb{P}}
\def \F  {\mathbb{F}}
\def \< {\langle}
\def \> {\rangle}
\def \^ {\widehat}
\def \tr {{\rm tr}}
\def \centre {{\rm center}}
\newcommand{\bbA}{{\bf A}}
\newcommand{\bbB}{{\bf B}}
\newcommand{\bbC}{{\bf C}}
\newcommand{\bbD}{{\bf D}}
\newcommand{\bbe}{{\bf e}}
\newcommand{\bbG}{{\bf G}}
\newcommand{\bbI}{{\bf I}}
\newcommand{\bbs}{{\bf s}}
\newcommand{\bbT}{{\bf T}}
\newcommand{\bbX}{{\bf X}}
\newcommand{\bbx}{{\bf x}}
\newcommand{\beq}{\begin{equation}}
\newcommand{\eeq}{\end{equation}}
\newcommand{\bqa}{\begin{eqnarray}}
\newcommand{\eqa}{\end{eqnarray}}
\newcommand{\bqn}{\begin{eqnarray*}}
\newcommand{\eqn}{\end{eqnarray*}}
\newcommand{\non}{\nonumber \\}
\newcommand{\bdes}{\begin{description}}
\newcommand{\edes}{\end{description}}
\def\underwiggle 1{\ifmmode\setbox\TempBox=\hbox{$ 1$}\else\setbox\TempBox=\hbox{1}\fi
\setbox\TempBoxA=\hbox to \wd\TempBox{\hss\char'176\hss}
\rlap{\copy\TempBox}\smash{\lower9pt\hbox{\copy\TempBoxA}} }
\begin{document}

\title[ CLT of the smoothed empirical spectral distribution]{Universality of sample covariance matrices: CLT of the smoothed empirical spectral distribution}

\author{ Guangming Pan, \  \ Qi-Man Shao, \ \ Wang Zhou \\
}
\thanks{ G.M. Pan was partially supported by a grant M58110052 at the Nanyang Technological University;
Q.M. Shao was partially supported by Hong Kong RGC CERG 602608 and 603710;   W. Zhou was partially supported by a grant
R-155-000-106-112 at the National University of Singapore}

\address{Division of Mathematical Sciences, School of Physical and Mathematical Sciences, Nanyang Technological University, Singapore 637371}
\email{gmpan@ntu.edu.sg}

\address{Department of Mathematics, Hong Kong University of Science and Technology, Clear Water Bay, Kowloon, Hong Kong}
\email{maqmshao@ust.hk}

\address{Department of Statistics and Applied Probability, National University of
 Singapore, Singapore 117546}
\email{stazw@nus.edu.sg} \subjclass[2010]{Primary 15B52, 82B44, 62E20;
Secondary 60F17} \keywords{sample covariance matrices, Stieltjes
transform, nonparametric estimate, central limit theorem}

\maketitle

\begin{abstract}
A central limit theorem  (CLT) for the smoothed empirical spectral distribution of sample covariance matrices is established.
 Moreover, the CLTs for the smoothed quantiles of  Marcenko and Pastur's law have been also developed.
 \end{abstract}

\def\theequation{\thesection.\arabic{equation}}

\section{Introduction}

\setcounter {equation}{0}
The sample covariance matrix is defined by
$$
\bbA_n=\frac{1}{n}\bbX_n\bbX_n^T,  \ \text{where} \
\bbX_n=(X_{ij})_{p\times n}
$$
with \{$X_{ij}$\}, $i,j=\cdots,$
being a double array of independent and identically distributed (i.i.d.) real random variables (r.v.'s) with
$\E X_{11}=0 $ and $ \E X_{11}^2=1$. In the large dimensional random matrix theory, the
sample covariance matrix is a prominent model. One reason is that its eigenvalues are
 not only interesting in its own right, but also play important roles in many other areas
  of mathematics and engineering, such as combinatorics \cite{kks},
   mathematical physics \cite{me}, probability \cite{agz},  statistics \cite{john}
   and wireless communications \cite{PZ}. Its study dates back to
   the work of Wishart \cite{Wishart}, who considered the case where all $X_{ij}$ are Gaussian r.v.'s.
In this particular model, the joint distribution of the eigenvalues of $\bbA_n$ can be explicitly computed (as a special case of the Laguerre orthogonal ensemble). One can use this explicit formula to directly obtain the law of
 local eigenvalues in large dimensions, such as the distribution of the largest one \cite{Johan00, john}, the smallest
one \cite{Edel}, and the bulk ones \cite{Su}. Also  it is widely conjectured that these limiting behaviors hold for a much larger class of
sample covariance matrices. For recent progress in this direction, we refer to \cite{ESYJ}. By comparison, the universality of the Wigner matrices and $\beta$-ensembles is well developed, see  \cite{VV} and \cite{ESY}.

However, in order to capture the whole picture of the eigenvalues of sample covariance matrices,
it is necessary to study the behavior of all eigenvalues.
A good candidate for this purpose is the empirical spectral distribution (ESD) defined by
$$
F^{\bbA_n}(x)=\frac{1}{p}\sum\limits_{k=1}^pI(\lambda_k\leq x),
$$
where $\lambda_k,k=1,\cdots,p$ are the eigenvalues of $\bbA_n$.
It is equivalent to consider
$$
\bbB_n=\frac{1}{n}\bbX_n^T\bbX_n,
$$
because the eigenvalues of $\bbA_n$ and $\bbB_n$ differ by $|n-p|$
zero eigenvalues.  The almost sure convergence of $F^{\bbA_n}$ to the
famous Marcenko-Pastur law (MP law) is fully understood under the 2nd moment condition of
 $X_{11}$ when the dimension $p$ is of the same order as the sample size $n$.
 There has been a vast literature on this topic. One can refer to
 the pioneer work \cite{MP} and the recent book \cite{b7}.

After establishing the strong law of large numbers (SLLN),
one may wish to prove the central limit theorem (CLT).
However, as far as we know, even for the Wishart ensemble, there is no CLT available in the literature
about $F^{\bbA_n}(\cdot)$ due to the shortage of powerful tools.
Hence it is also impossible to make inference based on the individual
eigenvalue of the sample covariance matrix when one only has finite moment conditions. These difficulties push one to seek other possible ways to make statistical inference.

Motivated by the ``smoothing" ideas, Jing, Pan, Shao and Zhou \cite{ker}
propose the following  kernel estimators of the distribution function of the MP law,
\bqa
F_n(x) &=&\int_{-\infty}^x\, f_n(y)\,dy, \label{Fn}
\eqa
where
\bqa
f_n(x) &=&\frac{1}{ph}\sum\limits_{i=1}^pK(\frac{x-\lambda_i}{h})=\frac{1}{h}\int
K(\frac{x-y}{h})dF^{\bbA_n}(y)\label{a1},
\eqa
$K(\cdot)$ is a smooth function and $h$ is the bandwidth tending to zero as $n\to \infty$. Intuitively, $F_n(\cdot)$ depicts the global picture of all eigenvalues
 and should have no much difference from $F^{\bbA_n}(\cdot)$.  It was proved in \cite{ker}
that $F_n(\cdot)$ almost surely converges to the MP law under some
regularity conditions.

The main aim of this paper is to establish the CLTs for
$F_n(x)$, the smoothed version of the empirical spectral distribution $F^{\bbA_n}$, and $f_n(x)$. Moreover we develop CLTs for
 the $\alpha$-th quantile of $F_n(\cdot)$, which is a
 smoothed version of the $[p\alpha]$-th largest eigenvalue of $\bbA_n$.

\section{main results}
We first introduce some necessary notation, assumptions and some basic facts about the MP law.

 In this paper, we suppose that the ratio of the dimension and sample size
$c_n=p/n$ tends to a positive constant $c$ as $n\to \infty$.
Then $F^{\bbA_n}(\cdot)$ tends to
the so-called Marcenko and Pastur law with the density function
$$
f_c(x)=\begin{cases}(2\pi cx)^{-1}\sqrt{(b-x)(x-a)} & a\leq x\leq b.\\
0 &\text{otherwise}.
\end{cases}
$$
It has point mass $1-c^{-1}$ at the origin if $c>1$, where
$a=(1-\sqrt c)^2$ and $b=(1+\sqrt c)^2$ (see \cite{b7}). The distribution function of the MP law is denoted by $\F_c(\cdot)$.
The Stieltjes transform of the MP law is
\begin{equation}
m(z)=\frac{1-c-z+\sqrt{(z-1-c)^2-4c}}{2cz},\label{m2*}
\end{equation}
 which satisfies the equation
\begin{equation}\label{m2}
m(z)=\frac{1}{1-c-czm(z)-z}.
\end{equation}
Here the Stieltjes transform $m_F(\cdot)$ for any probability
distribution function $F(\cdot)$ is given by
\begin{equation}\label{b5*}
m_F(z)=\int\frac{1}{x-z}dF(x),\ \ z\in\mathcal{C}^+.
\end{equation}
The relationship between the Stieltjes transform of the limit of  $F^{\bbB_n}(\cdot)$ and $m(\cdot)$ is given by
\begin{equation}\label{m45}
\underline{m}(z)=-\frac{1-c}{z}+cm(z).
\end{equation}
which gives the equation satisfied by $\underline{m}(\cdot)$
 \begin{equation}\label{h31}
z=-\frac{1}{\underline{m}(z)}+\frac{c}{1+\underline{m}(z)}.
\end{equation}

For the kernel function $K(\cdot)$ we assume that
\begin{equation}\label{a25}
\lim\limits_{|x|\rightarrow\infty}|xK(x)|=\lim\limits_{|x|\rightarrow\infty}|xK'(x)|=0,\
\end{equation}
 \begin{equation} \label{a26} \int K(x)dx=1, \ \ \int
|xK'(x)|dx<\infty,  \ \ \int |K''(x)|dx<\infty.
\end{equation}
and
\begin{equation} \label{a27}
\int xK(x)dx=0,  \int x^2|K(x)|dx<\infty.
\end{equation}
Let $z=u+iv$, where $u\in \R$ and $v$ is in a bounded interval, say $[-v_0,v_0]$ with $v_0>0$. Suppose that
\begin{equation}\label{f63}
\int^{+\infty}_{-\infty}|K^{(j)}(z)|du<\infty, \quad j=0,1,2,
\end{equation}
uniformly in $v\in [-v_0,v_0]$, where $K^{(j)}(z)$ denotes the $j$-th derivative of $K(z)$. Also suppose that
\begin{equation}\label{a25*}
\lim\limits_{|x|\rightarrow\infty}|xK(x+iv_0)|=\lim\limits_{|x|\rightarrow\infty}|xK'(x+iv_0)|=0.
\end{equation}

   Our first result is the CLT for $(F_n(x)-\F_{c_n}(x))$.

 \begin{theorem}\label{theo2}
 Suppose that

\begin{itemize}
\item[1)]  $h = h(n)$ is a sequence of positive constants satisfying
$$
\lim\limits_{n\rightarrow\infty}\frac{nh^2}{\sqrt{\ln h^{-1}}}\rightarrow 0, \quad \lim\limits_{n\rightarrow\infty}\frac{1}{nh^2}\rightarrow 0,\quad
$$

\item[2)] $K(x)$ satisfies (\ref{a25})-(\ref{a25*}) and is analytic on
open interval including
$$
[\frac{a-b}{h},\frac{b-a}{h}] \ ;
$$

\item[3)]  $X_{ij}$ are i.i.d. with $\E X_{11}=0$, $Var(X_{11})=1$,
$\E X_{11}^4=3$ and
$\E X_{11}^{32}<\infty$,  $c_n\rightarrow c\in (0,1)$ ;

\end{itemize}
 Then, as
$n\rightarrow\infty$, for any fixed positive integer $d$ and different points $x_1,\cdots,x_d$ in $(a,b)$, the joint limiting distribution  of
\begin{equation}\label{g50}
\frac{\sqrt{2}\pi n}{\sqrt{\ln n}}\Big(F_n(x_j)-\F_{c_n}(x_j)\Big),\quad  \ j=1,\cdots,d
\end{equation}
is multivariate normal with mean zero and
covariance matrix $I$, the $d\times d$ identity matrix.
\end{theorem}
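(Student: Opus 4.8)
\emph{Step 1 (plan and contour representation; bias).} I would express $F_n(x_j)-\F_{c_n}(x_j)$ through the Stieltjes transform of $F^{\bbA_n}$, split off a deterministic bias, and prove a joint CLT for the remaining stochastic contour integral by a martingale decomposition in the spirit of the Bai--Silverstein CLT for linear spectral statistics (cf.\ \cite{b7,ker}); the new feature is that the relevant contour must sit at height of order $h$, which makes the variance grow like $\ln h^{-1}$. Concretely, let $\bar K(t)=\int_{-\infty}^{t}K(s)\,ds$, so $\bar K(-\infty)=0$, $\bar K(+\infty)=1$, and by Fubini $F_n(x)=\int\bar K(\tfrac{x-\lambda}{h})\,dF^{\bbA_n}(\lambda)$. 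Since $K$, hence $\bar K$, is analytic near $[\tfrac{a-b}{h},\tfrac{b-a}{h}]$ together with the strip bounds (\ref{f63})--(\ref{a25*}), on the event $\Omega_n$ that all eigenvalues of $\bbA_n$ lie in a fixed neighbourhood of $[a,b]$ --- which by $\E X_{11}^{32}<\infty$ and known bounds on extreme eigenvalues has probability $1-o(n^{-\ell})$ for every $\ell$ --- one may write $F_n(x)=-\frac1{2\pi i}\oint_{\mathcal C}\bar K(\tfrac{x-z}{h})\,m_{F^{\bbA_n}}(z)\,dz$ for a contour $\mathcal C$ enclosing $[a,b]$ whose horizontal sides lie at height of order $hv_0$ (keeping $\tfrac{x-z}{h}$ inside the strip $|\operatorname{Im}|\le v_0$). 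The same identity with $m(\cdot)$ in place of $m_{F^{\bbA_n}}$ represents $\int\bar K(\tfrac{x-\lambda}{h})\,d\F_{c_n}(\lambda)$, which differs from $\F_{c_n}(x)$ by $O(h^2)$ via a Taylor expansion using $\int uK(u)\,du=0$ from (\ref{a27}); with the classical mean expansion $\E m_{F^{\bbA_n}}(z)-m(z)=O(1/n)$ (uniform on $\mathcal C$ once $nh^2\to\infty$) this gives $\E F_n(x)-\F_{c_n}(x)=O(h^2+1/n)=o(\sqrt{\ln n}/n)$ by hypothesis 1), so the bias is negligible after normalization.

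\emph{Step 2 (martingale CLT).} Put $M_n(z)=\tr(\bbA_n-zI)^{-1}-\E\tr(\bbA_n-zI)^{-1}$, so $X_n(x_j):=F_n(x_j)-\E F_n(x_j)=-\frac1{2\pi i\,p}\oint_{\mathcal C}\bar K(\tfrac{x_j-z}{h})\,M_n(z)\,dz$. Writing $\bbA_n=\tfrac1n\sum_{k=1}^n\bbr_k\bbr_k^T$ over the columns $\bbr_k$ of $\bbX_n$ and $\E_k(\cdot)=\E(\cdot\mid\bbr_1,\dots,\bbr_k)$, a martingale decomposition over the columns together with the rank-one resolvent identity gives $M_n(z)=\sum_{k=1}^n\gamma_k(z)$ with $\gamma_k(z)=-(\E_k-\E_{k-1})\partial_z\log\bigl(1+\tfrac1n\bbr_k^T(\bbA_{n,k}-zI)^{-1}\bbr_k\bigr)$, hence $X_n(x_j)=\sum_{k=1}^n Y_{k,j}$ with $Y_{k,j}=-\tfrac1{2\pi i p}\oint_{\mathcal C}\bar K(\tfrac{x_j-z}{h})\gamma_k(z)\,dz$, a martingale-difference array. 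The multivariate martingale CLT applies once one verifies the conditional Lindeberg condition --- which reduces to a Lyapunov bound, ultimately to moment estimates for $\bbr_k^T(\bbA_{n,k}-zI)^{-1}\bbr_k-\tfrac1n\tr(\bbA_{n,k}-zI)^{-1}$ along $\mathcal C$, and this is exactly where $\E X_{11}^{32}<\infty$ is needed to keep such estimates effective down to $\operatorname{Im}z\asymp h$ --- and once one establishes that the conditional covariances $\sum_k\E_{k-1}[Y_{k,j}Y_{k,l}]$ converge to deterministic constants $\sigma_{jl}$.

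\emph{Step 3 (the covariance, and the identity matrix).} The $\sigma_{jl}$ reduce to a double contour integral of $\bar K(\tfrac{x_j-z_1}{h})\bar K(\tfrac{x_l-z_2}{h})$ against the limiting kernel of $\sum_k\E_{k-1}[\gamma_k(z_1)\gamma_k(z_2)]$, into which $\E X_{11}^4=3$ enters by suppressing the fourth-cumulant part so that the limit is universal. One then shows: for $j=l$ the near-axis part of this integral carries a term proportional to $\ln h^{-1}$; for $j\ne l$ the two factors $\bar K(\tfrac{x_j-\cdot}{h})$ and $\bar K(\tfrac{x_l-\cdot}{h})$ switch around the distinct bulk points $x_j\ne x_l$, so the singular ``on-diagonal'' contribution cancels and $\sigma_{jl}=o(\ln n/n^2)$. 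Since $1/(nh^2)\to0$ and $nh^2=o(\sqrt{\ln h^{-1}})$ together pin $h$ down to $n^{-1/2}$ up to a slowly varying factor, $\ln h^{-1}=\tfrac12\ln n\,(1+o(1))$, and a direct evaluation of the resulting one-dimensional integral gives $\tfrac{2\pi^2 n^2}{\ln n}\,\sigma_{jl}\to\delta_{jl}$. Inserting this into the martingale CLT of Step 2 yields the claimed joint asymptotic normality with covariance matrix $I$.

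\emph{Main obstacle.} The heart of the proof is Step 3 and the uniform resolvent control underneath it: one must handle $M_n(z)$ and the quadratic forms in $\gamma_k(z)$ uniformly for $\operatorname{Im}z$ as small as $h\asymp n^{-1/2}$ --- the regime where the moment method (powered here by $\E X_{11}^{32}<\infty$) is stretched to its limit --- and then extract the coefficient of $\ln h^{-1}$ cleanly, so that it is not overwhelmed by the $O(h^2)$ smoothing bias or the $O(1/n)$ mean correction. This delicate balance is precisely what forces the two-sided bandwidth restriction in hypothesis 1).
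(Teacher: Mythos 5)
Your overall architecture coincides with the paper's: represent $F_n(x_j)$ through the integrated kernel $\int_{-\infty}^{x}K(\frac{t-z}{h})\,dt$ on a contour whose horizontal sides sit at height $v_0h$, run the column-by-column martingale decomposition of $\tr\bbA^{-1}(z)-\E\tr\bbA^{-1}(z)$, and extract a diagonal covariance of order $\ln h^{-1}\sim\tfrac12\ln n$ from the logarithmic singularity $\ln(u_1-u_2)^2$, with off-diagonal terms vanishing for $x_j\neq x_l$. That part of your plan is sound and is exactly what Sections 3, 5 and Appendices 2--3 of the paper carry out.

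The genuine gap is in your Step 1, where you dispose of the deterministic centering by invoking ``the classical mean expansion $\E m_{F^{\bbA_n}}(z)-m(z)=O(1/n)$, uniform on $\mathcal C$.'' No such bound is available here: the classical Bai--Silverstein expansion holds on contours kept at a fixed distance from $[a,b]$, whereas your contour (like the paper's) has $\Im z\asymp h$, and the bandwidth conditions force $h\asymp n^{-1/2}$ up to a slowly varying factor. In that regime the best previously known control (Proposition 6.1 of \cite{g2}) gives only $\E|\Gamma(z)|^2\le M/(n^2v^3)$, and even the sharpened estimates proved in this paper yield $|\E m_n(z)-m(z)|\le M/(nv)$ (Proposition \ref{prop1}), i.e.\ $O(n^{-1/2})$ pointwise on the horizontal sides, not $O(1/n)$. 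Obtaining the precise asymptotics of $n(\E\underline m_n(z)-\underline m_n^0(z))$ down to $v\ge M/\sqrt n$ is the central technical content of the paper: it requires the sharp bounds on $\E(\Gamma(z))^2$ and $\E(\Gamma(z))^3$ (Lemmas \ref{lem11}--\ref{lem13}), the order $v^{-1/2}$ for $n^{-1}\E\tr\bbA^{-2}(z)$ and the bounds of Lemma \ref{lem14}, leading to the expansion (\ref{g32}) with an error $O\bigl(n^{-1}v^{-2}|z+c_n-1+2zc_nm_n^0(z)|^{-1}\bigr)$, which becomes negligible only after integration against the kernel (via the square-root-singularity estimate (\ref{h19})) and division by $\sqrt{\ln h^{-1}}$, as shown for (\ref{g44}) in Appendix 3. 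Your proposal neither supplies these estimates nor any substitute for them, yet the same small-$v$ resolvent control also underlies your Step 2 variance/Lyapunov bounds and the reduction of the conditional covariance to the deterministic kernel; asserting a uniform $O(1/n)$ mean expansion at $\Im z\asymp n^{-1/2}$ assumes away precisely the difficulty the theorem's hypotheses (notably $\E X_{11}^{32}<\infty$ and the two-sided bandwidth restriction) are designed to overcome.
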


\begin{remark}
The convergence rate $n/\sqrt{\ln n}$ is consistent with the
conjectured convergence rate $n/\sqrt{\ln n}$ of the ESD of sample covariance matrices to the MP law.
\end{remark}

\begin{remark}
It is easy to check that the Gaussian kernel $(2\pi)^{-1/2}e^{-x^2/2}$ satisfies all conditions specified in Theorem \ref{theo2}.
\end{remark}

Based on Theorem \ref{theo2} we may further develop the smoothed quantile estimators of the MP law. For $0<\alpha<1$, define the $\alpha$-quantile of the MP law by
\begin{equation}\label{m39}
x_\alpha=\inf \{x,\F_{c_n}(x)\geq \alpha\}
\end{equation}
and its estimator by
\begin{equation}\label{m40}
x_{n,\alpha}=\inf \{x,F_{n}(x)\geq \alpha\}.
\end{equation}

\begin{theorem}\label{theo3} Under the assumptions of Theorem \ref{theo2},
$$
\frac{n}{\sqrt{\ln n}}(x_{n,\alpha}-x_\alpha) \rightarrow N(0,\frac{1}{2\pi^2f_c^2(x_\alpha)}),\quad x_\alpha\in (a,b).
$$

\end{theorem}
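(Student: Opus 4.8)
The plan is to reduce the fluctuations of the estimated quantile $x_{n,\alpha}$ to those of the distribution estimator $F_n$ at the single point $x_\alpha$ — the standard quantile delta method — and then quote Theorem \ref{theo2} with $d=1$. Set $G_n(x):=F_n(x)-\F_{c_n}(x)$, so that Theorem \ref{theo2} gives $\frac{n}{\sqrt{\ln n}}\,G_n(x_\alpha)\Rightarrow N\!\big(0,\tfrac{1}{2\pi^2}\big)$. I would first record the elementary facts: since $c\in(0,1)$, the MP law is absolutely continuous with a continuous distribution function, $\F_{c_n}\to\F_c$ uniformly on $\R$, and because $x_\alpha\in(a,b)$ with $\F_{c_n}$ continuous and strictly increasing there, $\F_{c_n}(x_\alpha)=\alpha$ exactly (note $x_\alpha$ depends on $n$ through $c_n$); moreover $F_n=\int_{-\infty}^{\cdot}f_n$ is continuous with $F_n(+\infty)=1$, so $x_{n,\alpha}$ is finite and $F_n(x_{n,\alpha})=\alpha$.

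The first real step is consistency, $x_{n,\alpha}\pto x_\alpha$. For small $\e>0$ one has $\F_{c_n}(x_\alpha-\e)<\alpha<\F_{c_n}(x_\alpha+\e)$ for all large $n$ with both gaps bounded away from $0$; combining this with the almost sure consistency of $F_n$ for the (continuous) MP distribution function from \cite{ker} gives, a.s. for large $n$, $\sup_{x\le x_\alpha-\e}F_n(x)<\alpha\le F_n(x_\alpha+\e)$, hence $x_\alpha-\e<x_{n,\alpha}<x_\alpha+\e$. This yields $x_{n,\alpha}\to x_\alpha$ a.s.

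Next I would linearize. From $F_n(x_{n,\alpha})=\alpha=\F_{c_n}(x_\alpha)$ and the mean value theorem for $\F_{c_n}$,
\[
0 = G_n(x_{n,\alpha}) + f_{c_n}(\xi_n)\,(x_{n,\alpha}-x_\alpha)
\]
for some $\xi_n$ between $x_{n,\alpha}$ and $x_\alpha$; by consistency $\xi_n\pto x_\alpha$, and since $f_{c_n}\to f_c$ uniformly on compact subsets of $(a,b)$ with $f_c$ continuous and $f_c(x_\alpha)>0$, we get $f_{c_n}(\xi_n)\pto f_c(x_\alpha)$. It remains to replace $G_n(x_{n,\alpha})$ by $G_n(x_\alpha)$: on a fixed compact neighbourhood $J\subset(a,b)$ of $x_\alpha$,
\[
\big|G_n(x_{n,\alpha})-G_n(x_\alpha)\big|=\Big|\int_{x_\alpha}^{x_{n,\alpha}}\!\big(f_n(y)-f_{c_n}(y)\big)dy\Big|\le |x_{n,\alpha}-x_\alpha|\cdot\sup_{y\in J}|f_n(y)-f_{c_n}(y)|=o_P(1)\,|x_{n,\alpha}-x_\alpha|,
\]
using the a.s. convergence $f_n\to f_c$ from \cite{ker} and $f_{c_n}\to f_c$. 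Substituting into the previous identity and rearranging gives $\big(1+o_P(1)\big)f_{c_n}(\xi_n)(x_{n,\alpha}-x_\alpha)=-G_n(x_\alpha)$, i.e. $x_{n,\alpha}-x_\alpha=-f_c(x_\alpha)^{-1}\big(1+o_P(1)\big)G_n(x_\alpha)$. Multiplying by $n/\sqrt{\ln n}$, invoking $\frac{n}{\sqrt{\ln n}}G_n(x_\alpha)\Rightarrow N\!\big(0,\tfrac{1}{2\pi^2}\big)$ from Theorem \ref{theo2}, and applying Slutsky's theorem then yields $\frac{n}{\sqrt{\ln n}}(x_{n,\alpha}-x_\alpha)\Rightarrow N\!\big(0,\tfrac{1}{2\pi^2 f_c^2(x_\alpha)}\big)$.

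The mean value theorem and Slutsky steps are routine; the parts deserving care are the two convergence inputs from \cite{ker} (uniform consistency of $F_n$ for the consistency step, local uniform consistency of $f_n$ for the equicontinuity step) and the observation that the localization error in $G_n$ is $o_P(1)\cdot|x_{n,\alpha}-x_\alpha|$ rather than merely $o_P(1)$ — that size is exactly what lets it be absorbed into the leading term after dividing through. I expect the consistency step, together with correctly tracking the $n$-dependence of $x_\alpha$ via $\F_{c_n}\to\F_c$, to be the main (though not deep) obstacle.
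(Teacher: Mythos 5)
Your argument is correct in outline, but it is organized differently from the paper's proof. The paper proves Theorem \ref{theo3} by direct inversion of the quantile: it writes $\P\big(\tfrac{n}{\sqrt{\ln n}}(x_{n,\alpha}-x_\alpha)\le x\big)=\P\big(F_n(x_\alpha+\tfrac{x\sqrt{\ln n}}{n})\ge\alpha\big)=\P(\hat F_n(x)\ge g_n(x))$, shows $g_n(x)\to -xf_c(x_\alpha)$ by Taylor expansion of $\F_{c_n}$, and shows $\hat F_n(x)=\tfrac{n}{\sqrt{\ln n}}\big[F_n(x_\alpha)-\F_{c_n}(x_\alpha)\big]+o_p(1)$, so that the limit law follows from Theorem \ref{theo2} evaluated at the single point $x_\alpha$. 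You instead run the classical quantile delta method: a.s. consistency of $x_{n,\alpha}$, the exact identity $F_n(x_{n,\alpha})=\alpha=\F_{c_n}(x_\alpha)$, a mean-value linearization of $\F_{c_n}$, replacement of $G_n(x_{n,\alpha})$ by $G_n(x_\alpha)$, and Slutsky. Both routes ultimately rest on Theorem \ref{theo2} plus a negligibility statement for increments of $F_n-\F_{c_n}$ near $x_\alpha$; the difference is in what that statement must cover. The paper's cdf-inversion only needs the increment over a deterministic window of length $O(\sqrt{\ln n}/n)$ (and needs no consistency step at all), whereas your localization bound $|G_n(x_{n,\alpha})-G_n(x_\alpha)|\le|x_{n,\alpha}-x_\alpha|\sup_{y\in J}|f_n(y)-f_{c_n}(y)|$ requires $\sup_{y\in J}|f_n-f_{c_n}|=o_P(1)$ on a \emph{fixed} compact $J\subset(a,b)$, together with a Glivenko--Cantelli-type statement $\sup_x|F_n-\F_{c_n}|\to0$ a.s. for the consistency step; neither is proved in the present paper, so both must genuinely be imported from \cite{ker} (the paper itself only quotes the pointwise Theorem \ref{rem2} and continuity for its $o_p(1)$ step, which is terser but lighter). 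Provided those locally uniform consistency results from \cite{ker} are accepted, your proof is complete; a small point in its favor is that your identity $F_n(x_{n,\alpha})=\alpha$ needs only continuity of $F_n$, whereas the paper's event identity $\{x_{n,\alpha}\le t\}=\{F_n(t)\ge\alpha\}$ implicitly uses monotone-type behavior of $F_n$ near $x_\alpha$, which neither write-up discusses explicitly.
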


The next theorem is the CLT for $f_n(x)$.

\begin{theorem}\label{rem2}
Suppose that
\begin{itemize}
\item[1)]  $h = h(n)$ is a sequence of positive constants satisfying
\begin{equation}\label{band}
\lim_{n\to \infty}\frac{\ln h^{-1}}{nh^2}\rightarrow 0, \lim_{n\to \infty}nh^3=0 \ ;
\end{equation}

\item[2)] $K(x)$ satisfies (\ref{a25})-(\ref{a25*}) and is analytic on
open interval including
$$
[\frac{a-b}{h},\frac{b-a}{h}] \ ;
$$

\item[3)]  $X_{ij}$ are i.i.d. with $\E X_{11}=0$, $Var(X_{11})=1$,
$\E X_{11}^4=3$ and
$\E X_{11}^{32}<\infty$,  $c_n\rightarrow c\in (0,1)$ ;

\end{itemize}
Then, as $n\rightarrow\infty$, for any fixed positive integer $d$ and different points $x_1,\cdots,x_d$ in $(a,b)$,  the joint limiting  distribution
of
\begin{equation}\label{g49}
 nh\Big(f_n(x_j)-f_{c_n}(x_j)\Big), \quad  \ j=1,\cdots,d
\end{equation}
is  multivariate normal with mean zero and covariance matrix $\sigma^2 \, I$,
where
 $$
\sigma^2=-\frac{1}{2\pi^2}\int_{-\infty}^{+\infty}
\int_{-\infty}^{+\infty}K'(u_1) K'(u_2)\ln (u_1-u_2)^2du_1du_2.
$$
\end{theorem}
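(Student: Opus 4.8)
The plan is to pass from $f_n(x)$ to a contour integral of the Stieltjes transform $m_{F^{\bbA_n}}$, so that Theorem~\ref{rem2} becomes an assertion about the centred process $M_n(z):=\mathrm{Tr}(\bbA_n-zI)^{-1}-\E\,\mathrm{Tr}(\bbA_n-zI)^{-1}=p\big(m_{F^{\bbA_n}}(z)-\E\,m_{F^{\bbA_n}}(z)\big)$ along a contour that approaches the real axis at height comparable to $h$, and then to run a martingale argument of Bai--Silverstein type while keeping careful track of every negative power of the imaginary part. All of this is carried out after the usual preliminaries: truncation, recentring and rescaling of the $X_{ij}$ (which is where the hypothesis $\E\,X_{11}^{32}<\infty$ is used, here and in the a priori bounds below, because resolvent entries reach size $h^{-1}$ on our contour), and the fact that under the moment hypotheses no eigenvalue of $\bbA_n$ leaves $[a-\eps,b+\eps]$ with probability $1-o(1)$, which justifies the contour choice.

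\emph{Integral representation and the bias.} Fix distinct $x_1,\dots,x_d\in(a,b)$. Since $K$ is analytic on an open neighbourhood of $[(a-b)/h,(b-a)/h]$, for each $x=x_j$ the map $z\mapsto K((x-z)/h)$ is analytic inside the positively oriented rectangle $\CC_n$ with vertical sides at $\mathrm{Re}\,z=a-\eps$ and $\mathrm{Re}\,z=b+\eps$ and horizontal sides at $\mathrm{Im}\,z=\pm\eta_0 h$ (fixed $\eta_0>0$). Cauchy's formula gives $f_n(x)=-\frac1{2\pi i h}\oint_{\CC_n}K((x-z)/h)\,m_{F^{\bbA_n}}(z)\,dz$, whence
\begin{equation*}
nh\big(f_n(x)-f_{c_n}(x)\big)=nh\big(\E\,f_n(x)-f_{c_n}(x)\big)-\frac{n}{2\pi i\,p}\oint_{\CC_n}K\Big(\frac{x-z}{h}\Big)M_n(z)\,dz.
\end{equation*}
The bias term splits into the deterministic smoothing error $\int K(u)\,[f_{c_n}(x-hu)-f_{c_n}(x)]\,du=O(h^2)$ (using $\int K=1$, $\int uK(u)\,du=0$, $\int u^2|K(u)|\,du<\infty$), which contributes $O(nh^3)=o(1)$ by \eqref{band}; and a term built from $\E\,m_{F^{\bbA_n}}(z)-m_{c_n}(z)$, whose $1/n$ expansion remains valid down to height $\eta_0 h$ precisely because $\ln h^{-1}/(nh^2)\to0$, and which contributes $O(h)=o(1)$ after multiplication by $nh$. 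Hence it suffices to treat the random part.

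\emph{Martingale CLT, Lindeberg, and the variance.} Let $\E_k$ be conditional expectation given the first $k$ columns of $\bbX_n$ and put $\gamma_k(z)=(\E_k-\E_{k-1})M_n(z)$, so $M_n=\sum_{k=1}^n\gamma_k$ and the random part above equals $\sum_{k=1}^n\xi_{k,n}(x)$ with $\xi_{k,n}(x)=-\frac{n}{2\pi i\,p}\oint_{\CC_n}K((x-z)/h)\gamma_k(z)\,dz$, a martingale-difference array for each $x$. The interlacing bound $|\gamma_k(z)|\le C/(n|\mathrm{Im}\,z|)$ yields $\|\xi_{k,n}(x)\|_\infty\le C/n$, so $\sum_k\E|\xi_{k,n}(x)|^4\le(C/n)^2\sum_k\E|\xi_{k,n}(x)|^2$ and the Lindeberg condition follows once the second moments are bounded; the martingale CLT then reduces \eqref{g49} to the identities $\sum_{k=1}^n\E[\xi_{k,n}(x_i)\xi_{k,n}(x_j)\mid\mathcal F_{k-1}]\pto\sigma^2\delta_{ij}$. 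Substituting the contour integrals and the classical limit of the resolvent martingale differences (as in the Bai--Silverstein theory, \cite{b7}), namely
\begin{equation*}
\sum_{k=1}^n\E[\gamma_k(z_1)\gamma_k(z_2)\mid\mathcal F_{k-1}]\ \longrightarrow\ (\mathrm{const})\,\Lambda(z_1,z_2),\qquad \Lambda(z_1,z_2):=\frac{\partial^2}{\partial z_1\,\partial z_2}\log\frac{\underline{m}(z_1)-\underline{m}(z_2)}{z_1-z_2},
\end{equation*}
uniformly on $\CC_n\times\CC_n$ with an error that is $o(1)$ under \eqref{band}, the fourth-cumulant contribution vanishing because $\E\,X_{11}^4=3$, and then integrating by parts twice (which replaces each $K$ by $-h^{-1}K'$ and $\Lambda$ by the logarithm, cancelling the prefactor's $h^{-2}$), the conditional covariance is a double integral of $K'((x_i-z_1)/h)K'((x_j-z_2)/h)$ against $\log\frac{\underline{m}(z_1)-\underline{m}(z_2)}{z_1-z_2}$ over $\CC_n\times\CC_n$. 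Writing $z_\ell=x-h w_\ell$ and splitting $\CC_n$ into its two horizontal sides: on the blocks where $z_1,z_2$ lie on the same side the logarithm tends to the constant $\log\underline{m}'(x_i)$ (legitimate because $\mathrm{Im}\,z_\ell\asymp h>0$) and is annihilated by $\int K'=0$; on the blocks where $z_1,z_2$ pinch $x_i$ from opposite sides, $z_1-z_2\to0$ while $\underline{m}(z_1)-\underline{m}(z_2)$ stays bounded away from $0$, so the logarithm equals $[\,\mathrm{const}(x_i)-\log h\,]-\log(w_1-w_2)+o(1)$ and $\int K'=0$ again kills the divergent bracket, leaving only $-\log(w_1-w_2)$. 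For $i\neq j$ the surviving integrand is smooth, so $\int K'=0$ forces the limit to be $0$; for $i=j$, after symmetrising and bringing the two lines $\mathrm{Im}\,w=\pm\eta_0$ back to the real axis (the imaginary part drops out because $\int\int K'(w_1)K'(w_2)\,\mathrm{sgn}(w_1-w_2)\,dw_1dw_2=0$), the limit equals, once the overall constant from the prefactor and the covariance normalisation is collected, precisely
\begin{equation*}
-\frac{1}{2\pi^2}\int_{-\infty}^{+\infty}\int_{-\infty}^{+\infty}K'(u_1)K'(u_2)\ln(u_1-u_2)^2\,du_1\,du_2=\sigma^2.
\end{equation*}

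The crux is this conditional-variance step together with the a priori estimates it rests on: all the resolvent identities and martingale-difference computations must be pushed down to a contour with $\mathrm{Im}\,z\asymp h\to0$, where the standard bounds deteriorate, and one must show that $\sum_k\E[\gamma_k(z_1)\gamma_k(z_2)\mid\mathcal F_{k-1}]\to(\mathrm{const})\,\Lambda(z_1,z_2)$ with an error that is still $o(1)$ after division by $h^2$ --- this is exactly what forces both bandwidth constraints in \eqref{band} and the moment assumption $\E\,X_{11}^{32}<\infty$. A secondary but still delicate point is the bias step, where one needs the $1/n$-expansion of $\E\,m_{F^{\bbA_n}}$ to be uniformly valid on the shrinking contour $\CC_n$.
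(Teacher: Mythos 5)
Your overall architecture coincides with the paper's: Cauchy's formula on a rectangle of height $\asymp h$, splitting into a smoothing bias (killed by $nh^3\to0$), a mean term $\E\tr\bbA^{-1}(z)-nm_n^0(z)$, and a centered trace handled by a martingale CLT whose conditional covariance, after kernel asymptotics, produces $-\frac{1}{2\pi^2}\iint K'K'\ln(u_1-u_2)^2$, with the fourth-cumulant term vanishing because $\E X_{11}^4=3$. But two of your steps fail or are asserted rather than proved. First, your Lindeberg verification rests on the bound $|\gamma_k(z)|\le C/(n|\Im z|)$, which is false: rank-one interlacing only gives $|(\E_k-\E_{k-1})\tr\bbA^{-1}(z)|\le 2/|\Im z|$ (cf.\ (\ref{m17})), so $\|\xi_{k,n}\|_\infty=O(1)$, not $O(1/n)$, and a uniform $O(1)$ bound plus bounded second moments does not give Lindeberg. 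The paper instead proves the Lyapunov condition via genuine quadratic-form moment estimates at $v=v_0h$, e.g.\ (\ref{h1}) and $\sum_k\E|Y_k(x)|^4\le M/(nv^2)$. Second, the conditional-covariance limit you invoke is not citable from ``classical Bai--Silverstein theory'': that theory is carried out on contours kept at fixed distance from the spectrum, whereas here one needs the resolvent expansions with quantitative errors of order $(nv^2)^{-1}$ uniformly down to $\Im z=v_0h\to0$, together with edge control such as (\ref{h19}). Producing exactly these uniform estimates (Section \ref{finite-dim} with Lemmas \ref{lem1}, \ref{lem7}, \ref{lem8}) is the core of the paper; you correctly name it as the crux but supply no argument, so as a proof this is a genuine gap rather than a routine appeal to known results.

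The mean term is a second real gap. You claim it contributes $O(h)$ ``because the $1/n$ expansion remains valid down to height $\eta_0h$,'' but the best previously available pointwise rate on such a contour is $|\E m_n(z)-m_n^0(z)|\le M/(nv)$ (Proposition \ref{prop1}), and inserting that into the contour integral yields a contribution of order $O(1)$, not $o(1)$. What is needed is the expansion (\ref{g32}) with an explicit main term and error $O\big(n^{-1}v^{-2}|z+c_n-1+2zc_nm_n^0(z)|^{-1}\big)$, whose derivation in Section \ref{lim-mean} rests on the new sharp bounds for $\E\Gamma^2$ and $\E\Gamma^3$ (Lemmas \ref{lem11} and \ref{lem13}, supported by Lemmas \ref{lem14}, \ref{lem15}, \ref{lem12}), followed by the separate argument that the explicit term integrated against $K((x-z)/h)$ tends to zero (the treatment of (\ref{f50}) in the appendix, using (\ref{h19}), the argument function and Parzen's theorem). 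None of this is absorbed by ``the expansion remains valid''; it is where the high-moment assumption and much of the bandwidth condition are actually spent. By contrast, your endgame for the variance is fine as a variant of the paper's: you keep the ratio $\log\frac{\underline m(z_1)-\underline m(z_2)}{z_1-z_2}$ and extract the singular $-\log|u_1-u_2|$ from opposite-side pairs, while the paper first removes $\ln\big((z_1-z_2)\underline m_n^0(z_1)\underline m_n^0(z_2)\big)$ by Cauchy's theorem in (\ref{g53}) and recovers $\ln(u_1-u_2)^2$ from the same-side difference via the mean value theorem; the two bookkeepings agree, but the passage to the limit must be justified with the decay hypotheses on $K$ as in Lemma \ref{lem4}, not merely by $\int K'=0$.
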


Note that the Gaussian kernel $(2\pi)^{-1/2}e^{-x^2/2}$ also satisfies all conditions specified in Theorem \ref{rem2}.
 Theorem \ref{rem2} is actually a corollary of the following theorem.

\begin{theorem} \label{theo1}
 When the condition $\lim_{n\to \infty}nh^3=0$ in Theorem \ref{rem2}
  is replaced by $$\lim_{n\to \infty} h=0$$ while the remaining conditions are unchanged, Theorem \ref{rem2} holds as well if the
random variables (\ref{g49}) are replaced by
$$nh\Big[f_n(x_j)-\frac{1}{h}\int^b_a
K(\frac{x_j-y}{h})d\F_{c_n}(y)\Big], \ \  \quad x_j\in (a,b), \ j=1,\cdots,d
$$
\end{theorem}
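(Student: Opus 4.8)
The plan is to pass from the kernel-smoothed ESD to an integral of the Stieltjes transform and then invoke a CLT for linear spectral statistics of $\bbA_n$ along contours shrinking toward the real axis. First I would write, using the analyticity of $K$ on a neighborhood of $[\frac{a-b}{h},\frac{b-a}{h}]$ and a Cauchy-type integral representation, the centered quantity $f_n(x_j)-\frac1h\int_a^b K(\frac{x_j-y}{h})d\F_{c_n}(y)$ as a contour integral of $m_{F^{\bbA_n}}(z)-\E m_{F^{\bbA_n}}(z)$ (plus a bias term controlled by $\E m_{F^{\bbA_n}}-m$) against a kernel built from $K$, where the contour has imaginary part of order $h$ — more precisely, substitute $z=x_j-h\zeta$ and use the hypothesis (\ref{f63}), (\ref{a25*}) so that the integral in the complex variable converges uniformly. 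This reduces the problem to understanding the fluctuations of $M_n(z):=p\big(m_{F^{\bbA_n}}(z)-\E m_{F^{\bbA_n}}(z)\big)$ at height $v\sim h$, at scale $nh$.

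Next I would establish the requisite two ingredients for $M_n$ on the relevant height-$h$ contour: (i) a convergence of the finite-dimensional distributions of $\big(nh/p\big)M_n(x_j-h\zeta)$, jointly in $j$ and in $\zeta$ ranging over the contour, to a Gaussian field, with the covariance kernel computed from the known formula for the asymptotic covariance of linear eigenvalue statistics of sample covariance matrices (the Bai–Silverstein-type kernel), specialized and expanded as $v\to 0$; and (ii) a tightness / stochastic-equicontinuity estimate in $\zeta$ that legitimizes interchanging the limit with the contour integral. For (i) the engine is the martingale CLT applied to the sum of rank-one perturbation increments $\E_k - \E_{k-1}$ of the resolvent trace, exactly as in the classical CLT for linear spectral statistics, but now tracking the $v$-dependence of all error terms so that they remain negligible when $v\asymp h$ and the normalization is $nh$ rather than $n$; the moment hypothesis $\E X_{11}^{32}<\infty$, the matching fourth moment $\E X_{11}^4=3$, and the bandwidth condition $\ln h^{-1}/(nh^2)\to 0$ are precisely what make the truncation, centering, and the off-diagonal/quadratic-form concentration bounds go through uniformly on this contour. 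The $\ln$ in the variance normalization of Theorem~\ref{theo2} (and the $\ln(u_1-u_2)^2$ in $\sigma^2$ here) emerges when the covariance kernel of $M_n$, which behaves like $\log$ of the difference of the arguments, is integrated against $K'(u_1)K'(u_2)$ after an integration by parts that moves a derivative off the logarithmic singularity.

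Then I would carry out the explicit covariance computation: plugging the $v\to 0$ asymptotics of the linear-statistics covariance kernel into the double contour integral, performing the change of variables $z_\ell = x_j - h u_\ell$, integrating by parts in each variable to replace the kernel $K$ by $K'$, and letting the contour collapse, one obtains the stated $\sigma^2=-\frac{1}{2\pi^2}\int\int K'(u_1)K'(u_2)\ln(u_1-u_2)^2\,du_1\,du_2$ for the diagonal entries and $0$ for $j\ne j'$, the vanishing of the cross terms following because for distinct $x_j\ne x_{j'}$ the logarithmic kernel is bounded near the relevant ranges and the $h\to 0$ limit of the off-diagonal double integral is $O(h\ln h^{-1})\to 0$ under (\ref{band}). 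Finally, the bias term $\frac1h\int K(\frac{x_j-y}{h})\,d(\E F^{\bbA_n}-\F_{c_n})(y)$ is handled via the known convergence rate of $\E m_{F^{\bbA_n}}$ to $m$ on a height-$h$ contour, which contributes $O((nh)^{-1}\cdot\text{something})$; note that Theorem~\ref{theo1} only assumes $h\to 0$ (not $nh^3\to 0$) precisely because it centers at $\frac1h\int_a^b K(\frac{x_j-y}{h})d\F_{c_n}(y)$ rather than at $f_{c_n}(x_j)$, so no smoothing-bias term of order $h^2$ needs to be absorbed — that is the entire content of why Theorem~\ref{rem2} follows from Theorem~\ref{theo1} once $nh^3\to 0$ kills the difference $\frac1h\int_a^b K(\frac{x_j-y}{h})d\F_{c_n}(y)-f_{c_n}(x_j)=O(h^2)$ after multiplication by $nh$.

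The main obstacle will be step (i)–(ii): pushing the CLT for linear spectral statistics down to a contour whose distance to the spectrum is only of order $h\to 0$, and showing that every error term in the martingale decomposition and in the deterministic-equivalent approximation of $\E m_{F^{\bbA_n}}$ remains $o(1)$ after multiplication by $nh$ and after integration against the (now $h$-dependent, sharply peaked) test function. This requires quantitative resolvent bounds — local-law-type control of $|m_{F^{\bbA_n}}(x+iv)-m(x+iv)|$ and of quadratic forms $\bbx^*(\bbB_n-z)^{-1}\bbx$ — that are uniform for $v$ down to $h$, and it is here that the high moment assumption $\E X_{11}^{32}<\infty$ and the bandwidth restriction are used to their full strength. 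Everything downstream (the covariance evaluation and the reduction of Theorem~\ref{rem2} to Theorem~\ref{theo1}) is, by contrast, a matter of contour deformation, integration by parts, and bookkeeping.
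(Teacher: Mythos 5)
Your proposal follows essentially the same route as the paper: a Cauchy contour representation at height of order $h$, the martingale decomposition of $\tr\bbA^{-1}(z)-\E\tr\bbA^{-1}(z)$ with integration by parts moving the test function to $K'$, a martingale CLT whose Bai--Silverstein-type covariance kernel has the logarithmic singularity that produces $\sigma^2$, and control of the deterministic bias $n(\E m_n(z)-m_n^0(z))$ on the shrinking contour, which is exactly how the paper proceeds in Section \ref{finite-dim}, Section \ref{lim-mean} and Appendix 2. The plan is correct in outline, with the hard quantitative work (uniform resolvent and quadratic-form estimates down to $v\asymp h$) correctly identified as the main burden.
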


The paper is organized as follows.
Theorem \ref{theo1} is proved in Section \ref{finite-dim}, and some calculations involved in the proof are deferred to Appendix 2.
In Section \ref{con-Em}, we establish the optimal orders for $\E(\Gamma(z))^2$ and $\E(\Gamma(z))^3$
 where $\Gamma(z)=n^{-1}\tr\bbA^{-1}(z)-n^{-1}\E\tr\bbA^{-1}(z)$ with $\bbA^{-1}(z)=(\bbA_n-z\bbI)^{-1}$, $z=u+iv$ and $v\geq M/\sqrt{n}$ for some constant $M$. It is the most difficult and important result of this paper.
In Section \ref{lim-mean}, we derive the limit of $ (2\pi
i)^{-1}\oint K((x-z)/h)n(\E m_n(z)-m_n^0(z))dz $, which is essential
to Theorem \ref{theo2}. The proof of Theorems \ref{rem2} and
\ref{theo2} is completed in Section \ref{proof-31}. Section
\ref{proof-2} handles Theorem \ref{theo3}. Some technical lemmas are
given in Appendix 1.

Before concluding this section, let us say a few words about the proof of Theorem \ref{theo2}.
Key breakthroughs are to establish optimal orders for $\E(\Gamma(z))^2$ and $\E(\Gamma(z))^3$ with $z=u+iv$ and $v\geq M/\sqrt{n}$ for some constant $M$. This turns out to be quite challenging when $v$ is of the order $n^{-1/2}$. The best order obtained so far is
$
\E|\Gamma(z)|^2\leq M(nv)^{-2}|z+c-1+2zcm(z)|^{-2},
$
or $M/(n^2v^3)$ (see Proposition 6.1 in \cite{g2}). Roughly speaking we establish
$$
\frac{1}{|z+c-1+zcm(z)+zc\E
m_n(z)|}|\E(\Gamma(z))^2|\leq\frac{M}{n^2v^2|z+c-1+2zcm(z)|}
$$
and
$$
|\E(\Gamma(z))^3|\leq M/(n^2v^2).
$$
To this end, we develop a precise order of
$n^{-1}\E\tr\bbA^{-2}(z)$, which is of $v^{-1/2}$. Also, some sharp
bound for $n^{-1}\E\tr\bbA_k^{-2}(z)\underline{\bbA}_k^{-1}(z)$ is
established (the definition of $\underline{\bbA}_k^{-1}(z)$ is given
right before section 3.1). Indeed, these results also imply that the
convergence rate of $\E m_n(z)$ to $m(z)$ is $M/(nv)$ (see
Proposition
 \ref{prop1}), which further implies
 $$
|\E(\Gamma(z))^2|\leq M(nv)^{-2}.
 $$
We expect that this inequality could be used to solve the universality problem of the largest eigenvalue of sample covariance matrices as Johanssan does in \cite{Joh09}.

\section{Finite dimensional convergence of the processes}\label{finite-dim}

Throughout the paper, to save notation, $M$ may stand for different
constants on different occasions. This section deals with Theorem \ref{theo1}.

 Following the truncation steps in \cite{b2} we may
truncate and re-normalize the random variables so that
\begin{equation}\label{f46}
|X_{ij}|\leq \tau_nn^{1/2},\ \E X_{ij}=0,\  \E X_{ij}^2=1,
\end{equation}
where $\tau_nn^{1/3}\rightarrow\infty$ and $\tau_n\rightarrow 0$. Based on this one may then
verify that
\begin{equation}\label{f49}
\E X_{11}^4=3+O(\frac{1}{n}).
\end{equation}

Let $m_n^0(z)$ denote the one obtained from $m(z)$ with $c$ replaced by $c_n$.  For 
$x\in [a,b]$, by Cauchy's formula, with probability one for sufficiently large $n$,
\begin{eqnarray}
nh\Big(\big(f_n(x)-\frac{1}{h}\int
K(\frac{x-y}{h})d\F_{c_n}(y)\big)\Big)=-\frac{1}{2\pi
i}\oint_{\mathcal{C}_1}
K(\frac{x-z}{h})X_n(z)dz,\label{a3*}
\end{eqnarray}
where $X_n(z)=\tr(\bbA_n-z\bbI)^{-1}-nm_n^0(z)$ and the contour $\mathcal{C}_1$ is the union of four segments
$\gamma_j,j=1,2,3,4$. Here
$$\gamma_1=u-iv_0h,u\in [a_l,a_r],\ \gamma_2=u+iv_0h,u\in [a_l,a_r],
$$$$\gamma_3=a_l+iv,v\in
[-v_0h,v_0h],\ \gamma_4=a_r+iv,v\in [-v_0h,v_0h],$$ where $a_l$ is
any positive value smaller than $a$,
$a_r$ any value larger than $b$, and
$v_0$ is a constant specified in (\ref{f63}).

For the sake of simplicity, write $\bbA=\bbA_n$. We now introduce some notation and present some basic facts frequently used in this paper..
 Define $\bbA(z)=\bbA-z\bbI$,
$\bbA_k(z)=\bbA(z)-\bbs_k\bbs_k^T$ with $n^{1/2}\bbs_k$ being the $k$th column of $\bbX_n$. Let
$\E_k=\E(\cdot|\bbs_1,\cdots,\bbs_k)$ and $\E_0$ denote the
expectation. Let $v=\Im(z)$. Set
$$
\beta_k(z)=\frac{1}{1+\bbs_k^T\bbA_k^{-1}(z)\bbs_k},\
\eta_k(z)=
\bbs_k^T\bbA_k^{-1}(z)\bbs_k-\frac{1}{n}\tr\bbA_k^{-1}(z),
$$
$$
b_1(z)=\frac{1}{1+n^{-1}\E\tr\bbA_1^{-1}(z)},\
\beta_k^{\tr}(z)=\frac{1}{1+n^{-1}\tr\bbA_k^{-1}(z)}.
$$
$$
\Gamma_{k}=n^{-1}\tr\bbA^{-1}_{k}(z)-n^{-1}\E\tr\bbA^{-1}_{k}(z),\ \Gamma_{k}^{(2)}=n^{-1}\tr\bbA^{-2}_{k}(z)-n^{-1}\E \tr\bbA^{-2}_{k}(z)
$$
and
$$
\eta_k^{(2)}(z)=
\bbs_k^T\bbA_k^{-2}(z)\bbs_k-n^{-1}\tr\bbA_k^{-2}(z).
$$
We frequently use the following equalities:
\begin{equation}
\label{b23}\bbA^{-1}(z)-\bbA_k^{-1}(z)=-\beta_k(z)\bbA_k^{-1}(z)\bbs_k\bbs_k^T\bbA_k^{-1}(z);
\end{equation}
\begin{equation}
\label{b22}\beta_k=b_1-b_1\beta_k\xi_k(z)=b_1-b_1^2\xi_k(z)+b_1^2\beta_k\xi_k^2(z)
\end{equation}
where
$\xi_k(z)=\bbs_k^T\bbA_k^{-1}(z)\bbs_k-n^{-1}\E\tr\bbA_k^{-1}(z)$.
At this moment, we would point out that the length of the vertical
lines of the contour of integral in (\ref{a3*}) converges to zero.
As a consequence, except $|b_1(z)|$ we can expect neither
$|\beta_k(z)|$ nor $|\beta_k^{\tr}(z)|$ to be bounded above by
constants independent of $v$ although they are bounded by $|z|/|v|$
(see \cite{b4}) (of course $v\neq 0$ in the cases of interest).
Instead, the absolute moments of $\beta_k(z)$ and $\beta_k^{\tr}(z)$
are proved to be bounded. We summarize such estimates in Lemma
\ref{lem1} in Appendix 1. Sometimes we deal with the terms
$\beta_k^{\tr}(z)$ and $\beta_k(z)$ in the following way: One may
verify that
$$
\Im(1+n^{-1}\tr\bbA_k^{-1}(z))\geq v
n^{-1}\tr\bbA_k^{-1}(z)\bbA_k^{-1}(\bar z),
$$
which implies that
\begin{equation}\label{f44}
|\beta^{\tr}_k(z)n^{-1}\tr\bbA_k^{-1}(z)\bbA_k^{-1}(\bar
z)|\leq M|v|^{-1}.
\end{equation}
Similarly,
\begin{equation}
\label{m17}
|\beta_k\bbs_k^T\bbA_k^{-2}(z)\bbs_k|\leq |v|^{-1}.
\end{equation}
We shall also use the simple fact that
\begin{equation}\|\bbA_k^{-1}(z)\|\leq 1/|v|.\label{h25}\end{equation}
Throughout the paper the variable $z$ sometimes will be dropped from their corresponding expressions when there is no confusion.

Here is the famous martingale decomposition in the random matrix theory,
$$
\tr\bbA^{-1}(z)-\E\tr\bbA^{-1}(z)=\sum\limits_{k=1}^n\big(\E_k\tr \bbA^{-1}(z)-\E_{k-1}\tr \bbA^{-1}(z)\big)
$$
$$=\sum\limits_{k=1}^n(\E_k-\E_{k-1})\tr\big[\bbA^{-1}(z)-\bbA_k^{-1}(z)\big]
=-\sum\limits_{k=1}^n (\E_k-\E_{k-1} )\big[\beta_k(z)\bbs_k^T\bbA_k^{-2}(z)\bbs_k\big]
$$
\begin{equation}
=-\sum\limits_{k=1}^n (\E_k-\E_{k-1} )\big[\ln\beta_k(z)\big]',\label{g39}
\end{equation}
where the third step uses (\ref{b23}) and the derivative in the last equality is with respect to $z$.
We then obtain from integration by parts that
\begin{equation}\label{b12}
\frac{1}{2\pi i}\oint
K(\frac{x-z}{h})(\tr\bbA^{-1}(z)-\E\tr\bbA^{-1}(z))dz
\end{equation}
$$
=-\frac{1}{2\pi i}\sum\limits_{k=1}^n(\E_k-\E_{k-1})\oint
K(\frac{x-z}{h})\Big[\ln\beta_k(z)\Big]'dz
$$
\begin{equation}\label{h6}
=\frac{1}{h}\frac{1}{2\pi i}\sum\limits_{k=1}^n(\E_k-\E_{k-1})\oint
K'(\frac{x-z}{h})\ln \Big(\frac{\beta^{\tr}_k(z)}{\beta_k(z)}\Big)dz
\end{equation}
$$
=\frac{1}{h}\frac{1}{2\pi i}\sum\limits_{k=1}^n(\E_k-\E_{k-1})\oint
K'(\frac{x-z}{h})\ln \big(1+\beta^{\tr}_k(z)\eta_k(z)\big)dz
$$
\begin{equation}
=\frac{1}{h}\frac{1}{2\pi i}\sum\limits_{k=1}^n(\E_k-\E_{k-1})\oint
K'(\frac{x-z}{h})\big(\beta^{\tr}_k(z)\eta_k(z)+e_k(z)\big)dz\label{f41}
\end{equation}
where the complex logarithm functions can be selected as their respective principal value branches by Cauchy's theorem and
$$
e_k(z)=\ln (1+\beta^{\tr}_k(z)\eta_k(z))-\beta^{\tr}_k(z)\eta_k(z).
$$

Below, consider $z\in\gamma_2$, the top horizontal line of the contour, unless it is further specified. We remind readers that $v=v_0h$ on $\gamma_2$. The next aim is to prove that
\begin{equation}\label{f2}
\frac{1}{h}\sum\limits_{k=1}^n(\E_k-\E_{k-1})\int
K'(\frac{x-z}{h})e_k(z)dz\stackrel{i.p.}\longrightarrow0,
\end{equation}
where $i.p.$ means ``in probability". By Lemma \ref{lem8}, we have for $m=2,4,6,8$
\begin{equation}\label{g1}
\E\big(|\eta_k(z)|^m|\bbA_k^{-1}(z)\big)\leq  M n^{-m/2}
\big[n^{-1}\tr\bbA_k^{-1}(z)\bbA_k^{-1}(\bar
z)\big]^{m/2}.
\end{equation}
This, together with Lemma \ref{lem1} in Appendix 1 and (\ref{f44}), gives
\begin{equation}\label{h1}
\E|\beta^{\tr}_k(z)\eta_k(z)|^8=\E\big(|\beta^{\tr}_k(z)|^8\E(|\eta_k(z)|^8|\bbA_k^{-1}(z))\big)\leq M(nv)^{-4}.
\end{equation}
 Via (\ref{f63}), (\ref{h1}) and the inequality
  \begin{equation}|\label{h9}
  \ln (1+x)-x|\leq
M|x|^2, \ \mbox{for}\ |x|\leq 1/2,
\end{equation}  we obtain
$$
h^{-2}\E\Big|\sum\limits_{k=1}^n(\E_k-\E_{k-1})\int
K'(\frac{x-z}{h})e_k(z)I(|\beta^{\tr}_k(z)\eta_k(z)|< 1/2)du\Big|^2
$$
\begin{equation}\label{g2}
\leq Mh^{-2}\sum\limits_{k=1}^n\E\Big|\int
K'(\frac{x-z}{h})e_k(z)I(|\beta^{\tr}_k(z)\eta_k(z)|< 1/2)du\Big|^2
\end{equation}
$$
\leq Mh^{-2}\sum\limits_{k=1}^n\Big[\int\int
|K'(\frac{x-z_1}{h})K'(\frac{x-z_2}{h})|\Big(\E|(\beta^{\tr}_k(z_1)\eta_k(z_1))|^4
$$$$\times \E|(\beta^{\tr}_k(z_2)\eta_k(z_2))|^4\Big)^{1/2}du_1du_2\Big]
\leq M/(nv^{2}).
$$
Note that
$$\ln (1+\beta^{\tr}_k(z)\eta_k(z))=\ln \beta_k(z)- \ln\beta^{\tr}_k(z).
$$
Moreover $|\beta_k(z)|\leq |z|/v$ and
$$
|\beta_k(z)|\geq (1+v^{-1}\bbs_k^T\bbs_k)^{-1}\geq (1+v^{-1}n\tau_n)^{-1}.
$$
It follows that
$$|\ln \beta_k(z)|\leq M\max \big(\ln v^{-1},\ \ln (n/v)\big).
$$ Likewise $|\ln\beta^{tr}_k(z)|\leq M\ln v^{-1}$. Hence
$$
|\ln (1+\beta^{\tr}_k(z)\eta_k(z))|\leq M\max \big(\ln v^{-1},\ \ln (n\tau_n/v)\big).
$$
 This, together with (\ref{h1}), ensures that
$$
\frac{1}{h}\E\Big|\sum\limits_{k=1}^n(\E_k-\E_{k-1})\int
K'(\frac{x-z}{h})e_k(z)I(|\beta^{\tr}_k(z)\eta_k(z)|\geq 1/2)du\Big|
\leq M/(nv^2).
$$
Thus, (\ref{f2}) is proven. Similarly, by (\ref{g1}) and Lemma \ref{lem1} we have
$$
\E\Big|\sum\limits_{k=1}^n\E_k\Big[\frac{1}{h}\int
K'(\frac{x-z}{h})\Big((\beta^{\tr}_k(z)-b_1(z))\eta_k(z)\Big)dz\Big]\Big|^2\leq M/(nv^2).
$$
 Therefore on $\gamma_2$
\begin{equation}\label{g41}
(\ref{b12})=\frac{1}{2\pi i}\sum\limits_{k=1}^nY_k(x)+o_p(1),
\end{equation}
where
$$
Y_k(x)=b_1(z)\E_k\Big[\frac{1}{h}\int
K'(\frac{x-z}{h})\eta_k(z)dz\Big].
$$

Apparently, $Y_k(x)$ is a martingale difference so that we may
resort to the CLT for martingales (see Theorem 35.12 in \cite{bili}).
Here we consider only one point $x$. But from the late proof, one can see that we actually prove the finite dimensional convergence.

As in (\ref{g2}), by (\ref{f63}) and (\ref{h1})
we have
$$
\sum\limits_{k=1}^n\E|Y_k(x)|^4\leq M/(nv^2).
$$
which ensures that the Lyapunov condition in the CLT is
satisfied.

Thus, it is sufficient to investigate the limit of the following
covariance function
\begin{eqnarray}
&&-\frac{1}{4\pi^2}\sum\limits_{k=1}^n\E_{k-1}[Y_k(x_1)Y_k(x_2)] \non
&=&-\frac{1}{4h^2\pi^2}\int\int K'(\frac{x_1-z_1}{h})
K'(\frac{x_2-z_2}{h})\mathcal{C}_{n1}(z_1,z_2)dz_1dz_2,\label{f48}
\end{eqnarray}
where
$$
\mathcal{C}_{n1}(z_1,z_2)=b_1(z_1)b_1(z_2)\sum\limits_{k=1}^n\E_{k-1}\big[\E_k\big(\eta_k(z_1)\big)\E_k\big(\eta_k(z_2)\big)\big].
$$

Note that for any non-random matrices $\bbB$ and $\bbC$
\begin{eqnarray} \label{i1}
&\E(\bbs_1^T\bbC \bbs_1-\tr\bbC)(\bbs_1^T\bbB\bbs_1-\tr\bbB) \\ & =
n^{-2}(\E X_{11}^4-|\E X_{11}^2|^2-2)\sum\limits_{i=1}^p(\bbC)_{ii}(\bbB)_{ii}+|\E X_{11}^2|^2n^{-2}\tr\bbC\bbB^T+n^{-2}\tr\bbC\bbB\nonumber.
\end{eqnarray}
This implies that
\begin{eqnarray}
&&b_1(z_1)b_1(z_2)\sum\limits_{k=1}^n\E_{k-1}\big(\E_k\eta_k(z_1)\E_k\eta_k(z_2)\big)
\non
&=&(\E X_{11}^4-3)b_1(z_1)b_1(z_2)\mathcal{C}_{n1}^{(1)}(z_1,z_2)+2b_1(z_1)b_1(z_2)\mathcal{C}_{n2}(z_1,z_2)\label{g7}\\
&=&2b_1(z_1)b_1(z_2)\mathcal{C}_{n2}(z_1,z_2)+O\big(1/(nv^2)\big),\label{g6}
\end{eqnarray}
where
\begin{eqnarray*}
\mathcal{C}_{n1}^{(1)}(z_1,z_2)&=&\frac{1}{n^2}\sum\limits_{k=1}^n\sum\limits_{j=1}^p \E_k(\bbA_k^{-1}(z_1))_{jj}\E_k(\bbA_k^{-1}(z_2))_{jj},\\
\mathcal{C}_{n2}(z_1,z_2)&=&\frac{1}{n^2}\sum\limits_{k=1}^n\tr\E_k(\bbA_k^{-1}(z_1))
\E_k(\bbA_k^{-1}(z_2))\\
&=&\frac{1}{n^2}\sum\limits_{k=1}^n\tr\E_k\big(\bbA_k^{-1}(z_1)
\underline{\bbA}_k^{-1}(z_2)\big)
\end{eqnarray*}
and the last step uses (\ref{f49}) and (\ref{h25}). Here
$\underline{\bbA}_{k}^{-1}(z)$ is defined by $\bbs_1,\cdots,\bbs_{k-1},$ $\underline{\bbs}_{k+1},\cdots,\underline{\bbs}_n$
as $\bbA_{k}^{-1}(z)$ is defined by
$\bbs_1,\cdots,\bbs_{k-1},\bbs_{k+1},\cdots,\bbs_n$ with $\underline{\bbs}_{1},\cdots,\underline{\bbs}_{n}$ being i.i.d.
copies of $\bbs_1$ and independent of $\{\bbs_j,j=1,\cdots,n\}$.

\subsection{The limit of $\mathcal{C}_{n2}(z_1,z_2)$ }

The next aim is to develop the limit of
$\mathcal{C}_{n2}(z_1,z_2)$. To
this end, we introduce more notation and estimates. Let
$$\bbA_{kj}(z)=\bbA(z)-\bbs_k\bbs_k^T-\bbs_j\bbs_j^T,
\beta_{kj}(z)=\frac{1}{1+\bbs_j^T\bbA^{-1}_{kj}(z)\bbs_j},
$$
 $$
b_{12}(z)=\frac{1}{1+n^{-1}\E\tr\bbA^{-1}_{12}(z)}, \quad
\beta_{kj}^{tr}(z)=\frac{1}{1+n^{-1}\tr\bbA^{-1}_{kj}(z)},
$$
$$
\Gamma_{kj}=n^{-1}\tr\bbA^{-1}_{kj}(z)-\E n^{-1}\tr\bbA^{-1}_{kj}(z),\ \Gamma_{kj}^{(2)}=n^{-1}\tr\bbA^{-2}_{kj}(z)-\E n^{-1}\tr\bbA^{-2}_{kj}(z)
$$
and
$$
\xi_{kj}(z)=\bbs_j^T\bbA^{-1}_{kj}(z)\bbs_j-\E n^{-1}\tr\bbA^{-1}_{kj}(z),\quad
\eta_{kj}(z)=\bbs_j^T\bbA^{-1}_{kj}(z)\bbs_j-n^{-1}\tr\bbA^{-1}_{kj}(z).
$$
Actually, they are similar to $\bbA_k(z), \beta_k(z), \cdots$. Note that
\begin{equation}\label{f6}
\bbA_{k}^{-1}(z)-\bbA_{kj}^{-1}(z)=-\beta_{kj}(z)\bbA_{kj}^{-1}(z)\bbs_j\bbs_j^T\bbA_{kj}^{-1}(z)
\end{equation}
which is similar to \eqref{b23},  and (see Lemma 2.10 of \cite{b4}) for any $p\times p$ matrix $\bbD$
\begin{equation}\label{f10}
|\tr(\bbA_k^{-1}(z)-\bbA_{kj}^{-1}(z))\bbD|\leq \|\bbD\|v^{-1}.
\end{equation}
Also, we have
\begin{equation}\label{f7}
|\beta_{kj}|\|\bbs_j^T\bbA_{kj}^{-1}(z)\|^2=|\beta_{kj}\bbs_j^T\bbA_{kj}^{-1}(z)\bbA_{kj}^{-1}(\bar
z)\bbs_j|\leq v^{-1}.
\end{equation}
Write
\begin{equation}\label{f13}
\beta_{kj}(z)=b_{12}(z)-\beta_{kj}(z)b_{12}(z)\xi_{kj}(z)=b_{12}(z)-b_{12}^2(z)\xi_{kj}(z)+\beta_{kj}(z)b_{12}^2(z)\xi^2_{kj}(z).
\end{equation}
By Lemma \ref{lem1} we have
\begin{equation}\label{f14}
\E n^{-1}\tr\bbA^{-1}(z)\bbA^{-1}(\bar
z)=v^{-1}\Im(\E n^{-1}\tr\bbA^{-1}(z))\leq Mv^{-1},
\end{equation}
which, together with (\ref{f10}), implies that
\begin{equation}\label{f15}
\E n^{-1}\tr\bbA_{kj}^{-1}(z)\bbA_{kj}^{-1}(\bar
z)\leq Mv^{-1}.
\end{equation}
 By Lemma \ref{lem1} in Appendix 1 and (\ref{f10}) we then have
\begin{equation}\label{h12}
\E| n^{-1}\tr\bbA_{kj}^{-1}(z)|^4\leq M.
\end{equation}
By (\ref{f10}) and the fact that $b_1(z)$ is bounded, given in Lemma
\ref{lem1}, it is straightforward to verify that
$|b_1(z)-b_{12}(z)|\leq (nv^2)^{-1}$ and hence
\begin{equation}\label{f3*}
|b_{12}(z)|\leq M.
\end{equation}

In the following, we will use $\E^j$ to denote the conditional
expectation given $\bbs_1, \bbs_2, \cdots$ except $\bbs_j$. It is
indeed the expectation taken with respect to $\bbs_j$. And write
\begin{equation}\label{h24}
\centre^j (x)=x-\E^j(x),
\end{equation}
where $x$ is some random variable. We claim that
\begin{equation}\label{h13}
\E\Big|n^{-1}\sum\limits_{j>k}\centre^j \big(\bbs_j^T\bbA^{-1}_{kj}(z_1)\underline{\bbA}_k^{-1}(z_2)\bbs_j\big)\Big|^2=O\big(1/(nv^2)\big),
\end{equation}
\begin{equation}\label{h13*}
\E\Big|n^{-1}\sum\limits_{j<k}\centre^j \big(\bbs_j^T
\bbA^{-1}_{kj}(z_1)\underline{\bbA}_{kj}^{-1}(z_2)\bbs_j\big)\Big|^2=O\big(1/(nv^2)\big)
\end{equation}
and
\begin{equation}\label{h13**}
\E\Big|n^{-1}\sum\limits_{j<k}\centre^j\big(\bbs_j^T
\bbA^{-1}_{kj}(z_1)\bbs_j\big)\Big|^2=O\big(1/(n^2v^2)\big).
\end{equation}
Consider (\ref{h13}) first.  Apparently by Lemma \ref{lem1} we have
\begin{equation}\label{h14}
n^{-2}\sum\limits_{j>k}\E\Big|\centre^j\big(\bbs_j^T
\bbA^{-1}_{kj}(z_1)\underline{\bbA}_k^{-1}(z_2)\bbs_j\big)\Big|^2=O\big(1/(n^2v^3)\big).
\end{equation}
Second, we also obtain
$$
n^{-2}\sum\limits_{j_1\neq j_2>k}\E\Big[\centre^{j_1}\big(\bbs_{j_1}^T
\bbA^{-1}_{kj_1}(z_1)\underline{\bbA}_k^{-1}(z_2)\bbs_{j_1}\big)
$$\begin{equation}\label{h15}\times\centre^{j_2}\big(\bbs_{j_2}^T
\bbA^{-1}_{kj_2}(\bar z_1)\underline{\bbA}_k^{-1}(\bar z_2)\bbs_{j_2}\big)\Big]=O\big(1/(n^2v^4)\big),
\end{equation}
which was ensured by the following estimates:
\begin{equation}\label{h16}
\E\big[\centre^{j_1}\big(\bbs_{j_1}^T
\bbA^{-1}_{kj_1}\underline{\bbA}_k^{-1}\bbs_{j_1}\big)\times
\centre^{j_2}\big(\bbs_{j_2}^T
\bbA^{-1}_{kj_1j_2}\underline{\bbA}_k^{-1}\bbs_{j_2}\big)\big]=0;
\end{equation}
(remember the convention that $z$ and $\bar z$ are dropped from the
corresponding expressions) and via (\ref{f7}), H\"older's inequality
$$
\E\big| \centre^{j_1}\big(\bbs_{j_1}^T
\bbA(k,j_2,j_1)\underline{\bbA}_k^{-1}(z_2)\bbs_{j_1}\big)
\times \centre^{j_2}\big(\bbs_{j_2}^T
\bbA(k,j_1,j_2)\underline{\bbA}_k^{-1}(\bar z_2)\bbs_{j_2}\big)\big|\leq M/(n^2v^4),
$$
where
$$
\bbA(k,j_2,j_1)=\bbA^{-1}_{kj_1j_2}(z_1)\bbs_{j_2}\bbs_{j_2}^T\bbA^{-1}_{kj_1j_2}(z_1)\beta_{kj_2j_1}(z),
$$
$$
\bbA^{-1}_{kj_1j_2}(z)=(\bbA-\bbs_k\bbs_k^T-\bbs_{j_1}\bbs_{j_1}^T-\bbs_{j_2}\bbs_{j_2}^T-z\bbI)^{-1}, \ \beta_{kj_2j_1}(z)=\big(1+\bbs_{j_2}^T\bbA^{-1}_{kj_1j_2}(z)\bbs_{j_2}\big)^{-1},
$$
and $\bbA(k,j_1,j_2)$ and $\beta_{kj_1j_2}(z)$ are defined similarly. Thus (\ref{h13}) is true, as claimed.

Consider (\ref{h13*}) next. Note that (\ref{h14}), (\ref{h16}) are still true if $\underline{\bbA}_k^{-1}(z_2)$ is replaced by $\underline{\bbA}_{kj_1j_2}^{-1}(z_2)$. Moreover, by (\ref{f7}), Lemma \ref{lem8}, H\"older's inequality we obtain
$$
\E\Big|\centre^{j_1}\big(\bbs_{j_1}^T
\bbA(k,j_2,j_1)(z_1)\underline{\bbA}(k,j_2,j_1)(z_2)\bbs_{j_1}\big)
$$$$\times \centre^{j_2}\big(\bbs_{j_2}^T
\bbA(k,j_1,j_2)(\bar z_1)\underline{\bbA}(k,j_2,j_1)(\bar z_2)\bbs_{j_2}\big)\Big|
$$
$$
\leq \frac{M}{n^2}\Big[\E\Big|\|\bbs_{j_2}^T\bbA^{-1}_{kj_1j_2}(z)\|^4\|\bbs_{j_2}^T\underline{\bbA}^{-1}_{kj_1j_2}(z)\|^4|\beta^2_{kj_2j_1}(z)\underline{\beta}^2_{kj_2j_1}(z)|\Big|
$$$$\times \E\Big|\|\bbs_{j_1}^T\bbA^{-1}_{kj_1j_2}(z)\|^4\|\bbs_{j_1}^T\underline{\bbA}^{-1}_{kj_2j_1}(z)\|^4|\beta^2_{kj_1j_2}(z)\underline{\beta}^2_{kj_1j_2}(z)|\Big|\Big]^{1/2}\leq M/(n^2v^4),
$$
where $\underline{\bbA}^{-1}_{kj_1j_2}(z)$ is obtained from  $\bbA^{-1}_{kj_1j_2}(z)$ with $\bbs_{k+1},\cdots,\bbs_{n}$ being replaced by $\underline{\bbs}_{k+1},\cdots,\underline{\bbs}_{n}$ and the remaining $\bbs_1,\cdots,\bbs_{k-1}$ unchanged, $\underline{\beta}_{kj_1j_2}(z)$ is obtained from $\beta_{kj_1j_2}(z)$ with $\bbA^{-1}_{kj_2j_1}(z)$ replaced by $\underline{\bbA}^{-1}_{kj_2j_1}(z)$ and $\underline{\bbA}(k,j_2,j_1)(z)$ from $\bbA(k,j_2,j_1)(z)$ with $\bbA^{-1}_{kj_2j_1}(z)$ and $\beta_{kj_1j_2}(z)$, respectively, replaced by $\underline{\bbA}^{-1}_{kj_2j_1}(z)$ and $\underline{\beta}_{kj_1j_2}(z)$. Here $\underline{\bbA}(k,j_1,j_2)$ and $\underline{\beta}_{kj_2j_1}(z)$ can be similarly defined. These estimates imply (\ref{h13*}).

Replacing $\underline{\bbA}_k^{-1}(z_2)$ by the identity matrix from (\ref{h14})-(\ref{h16}) yields (\ref{h13**}).

Let $u_n(z)=\big(z-(1-n^{-1})b_{12}(z)\big)^{-1}$.  We now state the equality (2.9) in
\cite{b2}
\begin{equation}\label{f21}
\bbA_k^{-1}(z)=-u_n(z)\bbI+b_{12}(z)B(z)+C(z)+D(z),
\end{equation}
where
$$
B(z)=\sum\limits_{j\neq
k}u_n(z)(\bbs_j\bbs_j^T-n^{-1}\bbI)\bbA^{-1}_{kj}(z),
$$
$$
C(z)=\sum\limits_{j\neq
k}(\beta_{kj}(z)-b_{12}(z))u_n(z)\bbs_j\bbs_j^T\bbA^{-1}_{kj}(z)
$$
and
$$
D(z)=n^{-1}b_{12}(z)u_n(z)\sum\limits_{j\neq
k}(\bbA^{-1}_{kj}(z)-\bbA^{-1}_{k}(z)).
$$

Applying the definition of $C(z_1)$ and (\ref{f13})
gives
\begin{equation}\label{h5}
n^{-1}\E_k\big[\tr C(z_1)
\underline{\bbA}_k^{-1}(z_2)\big]=C_1(z_1)+C_2(z_1),\end{equation}
 where
$$
C_1(z_1)=-b_{12}^2(z_1)n^{-1}\sum\limits_{j\neq
k}\E_k\big[\xi_{kj}(z_1)\bbs_j^T\hat{\bbA}^{-1}_{kjk}(z_1,z_2)\bbs_j\big]
$$
and
$$
C_2(z_1)=b_{12}^2(z_1)n^{-1}\sum\limits_{j\neq
k}\E_k\big[\beta_{kj}(z_1)\xi^2_{kj}(z)\bbs_j^T\hat{\bbA}^{-1}_{kjk}(z_1,z_2)\bbs_j\big].
$$
Here
\begin{equation}\label{h4}
 \hat{\bbA}^{-1}_{kjk}(z_1,z_2)=\bbA^{-1}_{kj}(z_1)
\underline{\bbA}_k^{-1}(z_2)u_n(z_1).
\end{equation}
 Define
 $
\zeta_{kj3}=\centre^j \big(\bbs_j^T\hat{\bbA}^{-1}_{kjk}(z_1,z_2)\bbs_j\big).
 $

We claim that the contribution from $C(z_1)$ is negligible. To
verify it we distinguish two cases: $j>k$ and $j<k$. Consider $j>k$
first. From (\ref{h12}) and an estimate similar to (\ref{h12}) we
have
\begin{eqnarray}
&\E|n^{-1}\tr\bbA_{kj}^{-1}(z_1)
\underline{\bbA}_{k}^{-1}(z_2)|^4\leq \non & M\big(\E|n^{-1}\tr\bbA_{kj}^{-1}(z_1)\bbA_{kj}^{-1}(\bar
z_1)|^4\E|n^{-1}\tr\underline{\bbA}_{k}^{-1}(z_2)\underline{\bbA}_{k}^{-1}(\bar
z_2)|^4\big)^{1/2}\leq M/v^{4}.\label{g8}
\end{eqnarray}
where we use (\ref{f10}) as well.
 It follows from Lemma \ref{lem1} and (\ref{g8}) that
$$
\E|C_2(z_1)|\leq Mn^{-1}\sum\limits_{j\neq
k}(\E|\xi_{kj}(z_1)|^4)^{1/2}\big[\E|\beta_{kj}(z_1)|^4
$$$$\qquad\qquad\times\big(\E|\zeta_{kj3}|^4+\E|n^{-1}\tr\hat{\bbA}^{-1}_{kjk}(z_1,z_2)|^4\big)\big]^{1/4}\leq M(nv^{2})^{-1}.
$$
As for $C_1(z_1)$, write
$$
\E_k\big[\xi_{kj}(z_1)\bbs_j^T\hat{\bbA}^{-1}_{kjk}(z_1,z_2)\bbs_j\big]=\E_k\big(\eta_{kj}(z_1)\zeta_{kj3}\big)
$$
\begin{equation}+\E_k\big(n^{-1}\Gamma_{kj}(\tr\hat{\bbA}^{-1}_{kjk}(z_1,z_2)-
\E\tr\hat{\bbA}^{-1}_{kjk}(z_1,z_2))\big)
+\E_k(\Gamma_{kj})n^{-1}\E\tr(\hat{\bbA}^{-1}_{kjk}(z_1,z_2)).\label{m48}
\end{equation}
We conclude from (\ref{f10}), Lemmas \ref{lem7} and \ref{lem1} that
the absolute moments of the first two terms above on the right hand
have an order of $1/(nv^2)$. As for the last term, it was proved in
Proposition 6.1 of \cite{g2} that
\begin{equation}\label{h28}
\E|n^{-1}\tr\bbA^{-1}(z)-\E n^{-1}\tr\bbA^{-1}(z)|^2\leq\frac{M}{n^2v^2|z+c_n-1+2c_nzm_n(z)|^2}
\end{equation}
In view of (\ref{m2*}), we have
\begin{equation}\label{g18}
z+c_n-1+2c_nzm_n^0(z)=\sqrt{(a_n-z)(b_n-z)},
\end{equation}
where $m_n^0(z)$ is obtained from $m(z)$ with $c$ replaced by $c_n$,
$a_n=(1-\sqrt{c_n})^2$ and $b_n=(1+\sqrt{c_n})^2$. From (\ref{a25*})
we claim that
\begin{equation}\label{h19}
\frac{1}{h}\int^{a_r}_{a_l}
\frac{|K\big((x-z)/h\big)|}{|z+c_n-1+2c_nzm_n^0(z)|}du\leq M
\frac{1}{h}\int^{a_r}_{a_l}\frac{|K\big((x-z)/h\big)|}{\sqrt{|(u-a_n)(b_n-u)|}}du\leq
M.
\end{equation}
Indeed, by a change of variables and dividing the integration region
into $|q|\leq\delta$ and $|q|>\delta$ with $0<\delta<\min
\{\frac{b_n-x}{2},\frac{x-a_n}{2}\}$, we have
$$
\frac{1}{h}\int^{a_r}_{a_l}\frac{|K\big((x-z)/h\big)|}{\sqrt{|(a_n-u)(b_n-u)|}}du=
\frac{1}{h}\int^{x-a_l}_{x-a_r}I(|q|\leq\delta)\frac{|K\big(q/h+iv_0\big)|}{\sqrt{|(x-q-a_n)(b_n-(x-q))|}}dq
$$
$$
+\frac{1}{h}\int^{x-a_l}_{x-a_r}I(|q|>\delta)\frac{|K\big(q/h+iv_0\big)|}{\sqrt{|(x-q-a_n)(b_n-(x-q))|}}dq
$$
$$
\leq
M_x\frac{1}{h}\int^{x-a_l}_{x-a_r}|K\big(q/h+iv_0\big)|dq+\sup\limits_{|q|>\delta}|\frac{q}{h}K(\frac{d}{h}+iv_0)|
\frac{1}{\delta}\int^{a_r}_{a_l}\frac{1}{\sqrt{|(u-a_n)(b_n-u)|}}du\leq
M,
$$
where $M_x$ denotes some positive constant which depends on $x$. It
follows from (\ref{h28}), (\ref{h19}), (\ref{f10}) and an inequality
similar to (\ref{f10}) that
\begin{equation}\label{h26}
\E\Big|\frac{1}{h}\int^{a_r}_{a_l}
K(\frac{x-z_1}{h})\E_k\big(\Gamma_{kj}
n^{-1}\E\tr(\hat{\bbA}^{-1}_{kjk}(z_1,z_2))\big)du_1\Big| \leq
M/(nv^2),
\end{equation}
where we also use the fact that
$|n^{-1}\E\tr(\hat{\bbA}^{-1}_{kjk}(z_1,z_2))|\leq M/v$.

For handling the case $j<k$, we define
$\underline{\bbA}_{kj}^{-1}(z),\underline{\beta}_{kj}(z)$ and
$\underline{\xi}_{kj}(z)$ by
$$\bbs_1,\cdots,\bbs_{j-1},\bbs_{j+1},\cdots,\bbs_{k-1},\underline{\bbs}_{k+1},\cdots,\underline{\bbs}_n$$
as $\bbA_{kj}^{-1}(z),\beta_{kj}(z)$ and $\xi_{kj}(z)$ are defined
by
$$\bbs_1,\cdots,\bbs_{j-1},\bbs_{j+1},\cdots,\bbs_{k-1},\bbs_{k+1},\cdots,\bbs_n.$$
When $j<k$, similar to (\ref{f6}), we then decompose
$\underline{\bbA}_{k}^{-1}(z_2)$ as
\begin{equation}\underline{\bbA}_{kj}^{-1}(z_2)-\underline{\bbA}_{kj}^{-1}(z_2)\bbs_j\bbs_j^T\underline{\bbA}_{k}^{-1}(z_2)\underline{\beta}_{kj}.\label{m49}
\end{equation}

Note that $\bbs_j$ is independent of
$\underline{\bbA}_{kj}^{-1}(z_2)$. Apparently, the preceding
argument for the case $j>k$ also works if we replace
$\underline{\bbA}_k^{-1}(z_2)$ in $\hat{\bbA}^{-1}_{kjk}(z_1,z_2)$
with $\underline{\bbA}_{kj}^{-1}(z_2)$, the first term of
(\ref{m49}), because the preceding argument used the independence
between $\underline{\bbA}_k^{-1}(z_2)$ and $\bbs_j$ when $j<k$. For
another term of $C_2(z_1)$ due to the second term of (\ref{m49}), by
(\ref{f7}), (\ref{h12}) and Lemma \ref{lem1}
$$
\E\Big|\beta_{kj}(z_1)\underline{\beta}_{kj}(z_2)\xi^2_{kj}(z)\bbs_j^T\bbA_{kj}^{-1}(z_1)\underline{\bbA}_{kj}^{-1}(z_2)\bbs_j\bbs_j^T
\underline{\bbA}_{kj}^{-1}(z_2)u_n(z_1)\bbs_j\Big|
$$
$$
\leq M
v^{-1}\big(\E|\beta_{kj}(z_1)|^2\E|\underline{\beta}_{kj}(z_2)|^2\E|\xi_{kj}|^8\E|\bbs_j^T\underline{\bbA}_{kj}^{-1}(z_2)\bbs_ju_n(z_1)|^4\big)^{1/4}
\leq M/(nv^{2}).$$ As for another term of $C_1(z_1)$, it follows
from Holder's inequality, Lemmas \ref{lem1}, \ref{lem7},
(\ref{h13**}), (\ref{f10}), (\ref{f6}) and (\ref{b23}) that yields
$$
n^{-1}\sum\limits_{j<
k}\E_k\big[\underline{\beta}_{kj}(z_2)\xi_{kj}(z_1)\bbs_j^T\bbA^{-1}_{kj}(z_1)
\underline{\bbA}_{kj}^{-1}(z_2)\bbs_j\bbs_j^T\underline{\bbA}_{kj}^{-1}(z_2)\bbs_ju_n(z_1)\big]
$$\begin{equation}\label{h38} =n^{-1}\sum\limits_{j<
k}\E_k\big[\underline{\beta}_{kj}(z_2)\xi_{kj}(z_1)\bbs_j^T\bbA^{-1}_{kj}(z_1)
\underline{\bbA}_{kj}^{-1}(z_2)\bbs_jn^{-1}\tr\underline{\bbA}_{kj}^{-1}(z_2)u_n(z_1)\big]+A_1
\end{equation}$$ =n^{-1}\E\big(n^{-1}\tr\bbA^{-1}_{kj}(z_1)
\underline{\bbA}_{kj}^{-1}(z_2)\big)\sum\limits_{j<
k}\E_k\big[\underline{\beta}_{kj}(z_2)\xi_{kj}(z_1)n^{-1}\tr\underline{\bbA}_{kj}^{-1}(z_2)u_n(z_1)\big]+A_2
$$$$
=n^{-1}u_n(z_1)b_{12}(z_2)\E n^{-1}\tr\underline{\bbA}_{kj}^{-1}(z_2)\E\big(n^{-1}\tr\bbA^{-1}_{kj}(z_1)
\underline{\bbA}_{kj}^{-1}(z_2)\big)
$$$$\times\sum\limits_{j<
k}\E_k\Big[\eta_{kj}(z_1)+\big(n^{-1}\tr\bbA^{-1}(z_1)-\E
n^{-1}\tr\bbA^{-1}(z_1)\big)\Big]+A_3
$$
$$
=u_n(z_1)b_{12}(z_2)\E
n^{-1}\tr\underline{\bbA}_{kj}^{-1}(z_2)\E\big(n^{-1}\tr\bbA^{-1}_{kj}(z_1)
\underline{\bbA}_{kj}^{-1}(z_2)\big)$$$$\times
(1-k/n)E_k\big(n^{-1}\tr\bbA^{-1}(z_1)-\E
n^{-1}\tr\bbA^{-1}(z_1)\big)+A_4,
$$
where each $A_j$ satisfies $E|A_j|\leq M/(nv^2)$, $j=1,2,3,4$ and
the last term can be handled as in (\ref{h26}).
 Summarizing the above we have proved that
\begin{equation}\label{h27}
\E\Big|\frac{1}{h}\int^{a_r}_{a_l} K(\frac{x-z_1}{h}) n^{-1}\tr\E_k\Big[C(z_1)
\underline{\bbA_k}^{-1}(z_2)\Big]du_1\Big|\leq\frac{M}{nv^2}.
\end{equation}

Consider $D(z_1)$ now. When $j>k$ using (\ref{f13}) and recalling the definition of $ \hat{\bbA}^{-1}_{kjk}(z_1,z_2)$ in (\ref{h4}) we obtain
$$
n^{-1}\E_k\Big[\tr D(z_1)\underline{\bbA_k}^{-1}(z_2)\Big]
=n^{-2}b_{12}(z_1)\sum\limits_{j\neq k}[D_1+D_2]
$$
where
$$
D_1=-n^{-1}\E_k\big[\tr\hat{\bbA}^{-1}_{kjk}(z_1,z_2)\bbA^{-1}_{kj}(z_1)\beta_{kj}(z_1)\big]
$$
and
$$
D_2=\E_k\big[\beta_{kj}(z_1)\centre^{j}\big(\bbs_j^T \hat{\bbA}^{-1}_{kjk}(z_1,z_2)\bbA^{-1}_{kj}(z_1)\bbs_j\big)\big].
$$
By Lemmas \ref{lem8}, \ref{lem1}, Holder's inequality and
(\ref{h12}) we have $\E|D_1|\leq M/v^2$ and $\E|D_2|\leq v^{-3/2}$.
These imply that for $j>k$
\begin{equation}
\E| n^{-1}\tr D(z_1)\underline{\bbA_k}^{-1}(z_2)|\leq
M/(nv^2).\label{h3}
\end{equation}
When $j<k$, we resort to (\ref{m49}), the decomposition of
$\underline{\bbA}_{k}^{-1}(z_2)$. As before, the above argument for
the case $j>k$ also works for the term involving
$\underline{\bbA}_{kj}^{-1}(z_2)$ if we replace
$\underline{\bbA}_k^{-1}(z_2)$ with
$\underline{\bbA}_{kj}^{-1}(z_2)$. Another term is
$$
\frac{b_{12}(z_1)}{n^2}\sum\limits_{j\neq
k}\Big[\beta_{kj}(z_1)\underline{\beta}_{kj}(z_2)\bbs_j^T\bbA_{kj}^{-1}(z_1)\underline{\bbA}_{kj}^{-1}(z_2)\bbs_j
\bbs_j^T\underline{\bbA}_{kj}^{-1}(z_2)\bbA_{kj}^{-1}(z_1)\bbs_ju_n(z_1)\Big],
$$
which has, via (\ref{f7}), an order of
$(nv^{2})^{-1}$. Thus, the contribution from $C(z_1)$ and $D(z_1)$ is negligible.

Next consider $B(z_1)$. In view of (\ref{h13}) and (\ref{h13*}) we may write
$$
n^{-1}\tr\E_k\big[ B(z_1)\underline{\bbA}_k^{-1}(z_2)\big]
=B_1(z_1)+B_2(z_1)+A_5,
$$
where
$$
B_1(z_1)=-n^{-1}\sum\limits_{j<k}
\E_k\big[\underline{\beta}_{kj}(z_2)\bbs_j^T\bbA_{kj}^{-1}(z_1)\underline{\bbA}_{kj}^{-1}(z_2)\bbs_j\bbs_j^T
\underline{\bbA}_{kj}^{-1}(z_2)\bbs_ju_n(z_1)\big],
$$
$$
B_2(z_1)=n^{-2}\sum\limits_{j<k}\E_k\big[\underline{\beta}_{kj}(z_2)\bbs_j^T
\underline{\bbA}_{kj}^{-1}(z_2)\bbA_{kj}^{-1}(z_1)\underline{\bbA}_{kj}^{-1}(z_2)\bbs_ju_n(z_1)\big]
$$
and $E|A_5|\leq \frac{M}{nv^2}$. By (\ref{f7}) and Lemma \ref{lem1}
we have $|B_2(z_1)|\leq M/(nv^2)$. With notation
$\hat{\eta}_{kj}=\centre^j\big(\bbs_j^T\bbA_{kj}^{-1}(z_1)$
$\underline{\bbA}_{kj}^{-1}(z_2)\bbs_j\big)$, from Lemma \ref{lem1}
we obtain
$$
\E\big|(\hat{\eta}_{kj})\big(n^{-1}\tr\bbA_{kj}^{-1}(z_1)-\E n^{-1}\tr\bbA_{kj}^{-1}(z_1)\big)\big|
\leq M/(nv^2),
$$
which, together with (\ref{h13*}), implies that
$$
\E\big|n^{-1}\sum\limits_{j<k}\big(\hat{\eta}_{kj}\big)
n^{-1}\tr\bbA_{kj}^{-1}(z_1)\big|=O(n^{-1}v^{-2}).
$$
Moreover, in view of Lemma \ref{lem7} and (\ref{h13**}) we have
$$
\E\big|n^{-2}\sum\limits_{j<k}\tr\bbA_{kj}^{-1}(z_1)\underline{\bbA}_{kj}^{-1}(z_2)\eta_{kj}\big|\leq M/(nv^2).
$$
Apparently by Lemma \ref{lem1} we also have
$$
\E\big|\hat{\eta}_{kj}\eta_{kj}\big|
\leq M/(nv^2).
$$
We then conclude from Lemmas \ref{lem8}, \ref{lem1} and (\ref{f13})
that
\begin{equation}\label{h37}
\big|B_1(z_1)+n^{-3}b_{12}(z_2)u_n(z_1)\sum\limits_{j<k}\E_k\big[\tr\bbA_{kj}^{-1}(z_1)\underline{\bbA}_{kj}^{-1}(z_2)
\tr\underline{\bbA}_{kj}^{-1}(z_2)\big]\big| =A_6,
\end{equation}
where $E|A_6|\leq\frac{M}{nv^2}$.

Furthermore by (\ref{f10}) we obtain
$$
n^{-3}\sum\limits_{j<k}\E_k\Big[\tr\bbA_{kj}^{-1}(z_1)\underline{\bbA}_{kj}^{-1}(z_2)\tr\underline{\bbA}_{kj}^{-1}(z_2)\Big]
$$$$=
\frac{k-1}{n^3}\E_k\Big[\tr\bbA_{k}^{-1}(z_1)\underline{\bbA}_{k}^{-1}(z_2)\tr\underline{\bbA}_{k}^{-1}(z_2)\Big]+O(\frac{1}{nv^2}).
$$
It follows from Lemma \ref{lem1}, (\ref{h28}) and (\ref{h19}) that
$$
\E_k\Big[\frac{k-1}{n^3}\tr\bbA_{k}^{-1}(z_1)\underline{\bbA}_{k}^{-1}(z_2)\tr\underline{\bbA}_{k}^{-1}(z_2)\Big]
$$$$=\frac{1}{n}\E\tr\underline{\bbA}_{k}^{-1}(z_2)\E_k\Big[\frac{k-1}{n^2}\tr\bbA_{k}^{-1}(z_1)\underline{\bbA}_{k}^{-1}(z_2)\Big]+A_7,
$$
where
\begin{equation}\frac{1}{h}\int^{a_r}_{a_l} |K(\frac{x-z_1}{h})|
\E|A_7|du_1\leq\frac{M}{nv^2}.\label{h29}
\end{equation}
We then conclude that
\begin{equation}\label{f20}
B_1(z_1)+b_{12}(z_2)u_n(z_1)n^{-1}\E\tr\underline{\bbA}_{k}^{-1}(z_2)\frac{k-1}{n^2}\E_k\Big[\tr\bbA_{k}^{-1}(z_1)\underline{\bbA}_{k}^{-1}(z_2)\Big]
=:A_8,
\end{equation}
where $A_8$ satisfies (\ref{h29}) with $A_7$ replaced by $A_8$.

Summarizing the argument from (\ref{h5}) to (\ref{f20}) yields
\begin{equation}\label{f30}
n^{-1}\E_k\big[\tr\bbA_k^{-1}(z_1)
\underline{\bbA}_k^{-1}(z_2)\big]=-n^{-1}u_n(z_1) \E_k\big[\tr\underline{\bbA}_{k}^{-1}(z_2)\big]
\end{equation}
\begin{eqnarray*}
-u_n(z_1)b_{12}(z_1)b_{12}(z_2)n^{-1}\E\big(\tr\bbA^{-1}(z_2)\big)\Big[\frac{k-1}{n^2}\E_k
\big(\tr\bbA_{k}^{-1}(z_1)\underline{\bbA}_{k}^{-1}(z_2)\big)\Big]
+A_9,
\end{eqnarray*}
where  $A_9$ satisfies (\ref{h29}) with $A_7$ replaced by $A_9$.

By the formula ( see (2.2) of \cite{s3})
$
\underline{m}_n(z)=-z^{-1}n^{-1}\sum\limits_{k=1}^n\beta_k(z),
$
we have
\begin{equation}\label{d12}
\E\beta_1(z)=-z\E\underline{m}_n(z)
\end{equation}
It follows from (\ref{b22}) and Lemma \ref{lem1} that
$$
|\E\beta_1(z)-b_1(z)|=|b_1(z)^2\E(\beta_1(z)\xi_1^2(z))|\leq M/(nv)
$$
and from (\ref{f10}) that
\begin{equation}\label{u1}
|b_1(z)-b_{12}(z)|\leq M/(nv).
\end{equation}
These, together with (\ref{g14}) below, imply that
\begin{equation}\label{g36}
|b_{12}(z)+z\underline{m}_n^0(z)|\leq M/(nv).
\end{equation}
This, along with (\ref{h31}), ensures that
\begin{equation}
n^{-1}\E\tr\bbA^{-1}(z)=-\frac{c_n}{z+z\underline{m}_n^0(z)}+O(\frac{1}{nv}).\label{g9}
\end{equation}
We then conclude from (\ref{f30}), (\ref{g36}), (\ref{g9}), Lemma \ref{lem1} and (\ref{g14}) that
 \begin{eqnarray}\label{f33}
&& n^{-1}\E_k\Big[\tr \bbA_k^{-1}(z_1)
\E_k(\bbA_k^{-1}(z_2))\Big]\times\Big[1-\frac{k-1}{n}b_n(z_1,z_2)\Big]
\non
&=&\frac{b_n(z_1,z_2)}{z_1z_2\underline{m}_n^0(z_1)\underline{m}_n^0(z_2)}+A_{10},
\end{eqnarray}
where $A_{10}$ satisfies (\ref{h29}) with $A_{7}$ replaced by
$A_{10}$ and
$$
b_n(z_1,z_2)=\frac{c_n\underline{m}_n^0(z_1)\underline{m}_n^0(z_2)}{(1+\underline{m}_n^0(z_1))
(1+\underline{m}_n^0(z_2))}.
$$

From (2.19) in \cite{b2} and the inequality above (6.37) in
\cite{ker} we see that
\begin{equation}
\label{f56} |1-\frac{k-1}{n}b_n(z_1,z_2)|\geq Mv, \quad
|1-tb_n(z_1,z_2)|\geq Mv,\quad\text{for any} \quad t\in [0,1].
\end{equation}
It follows that
\begin{equation}\label{h20}
|n^{-1}\sum\limits_{k=1}^n\big(1-\frac{k-1}{n}b_n(z_1,z_2)\big)^{-1}-\int^1_0\big(1-tb_n(z_1,z_2)\big)^{-1}dt|\leq\frac{M}{nv^2}.
\end{equation}
Similarly we have
$$
|n^{-1}\sum\limits_{k=1}^n\big(1-|\frac{k-1}{n}b_n(z_1,z_2)|\big)^{-1}-\int^1_0\big(1-tb_n(z_1,z_2)\big)^{-1}dt|\leq\frac{M}{nv^2},
$$
Moreover from Lemma \ref{lem1} and (\ref{f58})
$$
\int^1_0\big(1-t|b_n(z_1,z_2)|\big)^{-1}dt=|b_n(z_1,z_2)|^{-1}\ln (1-|b_n(z_1,z_2)|)=O(\ln 1/v).
$$
It follows that
\begin{equation}
n^{-1}\sum\limits_{k=1}^n|1-\frac{k-1}{n}b_n(z_1,z_2)|^{-1}=O(\ln 1/v).\label{h30}
\end{equation}
We conclude from Lemma \ref{lem1}, (\ref{f33}), (\ref{h20}) and (\ref{h30}) that
\begin{eqnarray}
\label{f24} a_{n1}(z_1,z_2)&=&b_n(z_1,z_2) n^{-1}\sum\limits_{k=1}^n
\big(1-\frac{k-1}{n}b_n(z_1,z_2)\big)^{-1}+A_{11}\non
&=&b_n(z_1,z_2)\int^1_0\big(1-tb_n(z_1,z_2)\big)^{-1}dt+A_{12}\non
&=&-\ln (1-b_n(z_1,z_2))+A_6\non &=&-\ln
\Big((z_1-z_2)\underline{m}_n^0(z_1)\underline{m}_n^0(z_2)\Big)-\ln
(\underline{m}_n^0(z_1)-\underline{m}_n^0(z_2))+A_{12},
\end{eqnarray}
where $A_{11}$ and $A_{12}$ satisfy
\begin{equation}\label{g33}
\frac{1}{h}\int^{a_r}_{a_l} |K(\frac{x-z_1}{h})|
\E|A_j|du_1\leq\frac{M\ln 1/v}{nv^2},\ j=5,6
\end{equation}
and in the last step one uses the fact that via (\ref{h31})
$$
z_1-z_2=\frac{\underline{m}_n^0(z_1)-\underline{m}_n^0(z_2)}{\underline{m}_n^0(z_1)\underline{m}_n^0(z_2)}(1-b_n(z_1,z_2)).
$$

So far we have considered $z\in\gamma_2$, the top horizontal line.
The above argument evidently works for the case of $z\in\gamma_1$,
the bottom horizontal line, due to symmetry.

To deal with the cases when $z$ belongs to two vertical lines of the contour, from Fubini's theorem and (\ref{f63}) we obtain for $j=0,1,2$.
$$
\int^{a_r}_{a_l}\Big[\frac{1}{h}\int^{v_0}_{0}|K^{(j)}(\frac{x-u}{h}+iv)|dv\Big]du=\int^{v_0}_{0}\Big[\frac{1}{h}\int^{a_r}_{a_l}|K^{(j)}(\frac{x-u}{h}+iv)|du\Big]dv<\infty.
$$
This implies for $u\in [a_l,a_r]$
\begin{equation}
\label{g3}\frac{1}{h}\int^{v_0}_{0}|K^{(j)}(\frac{x-u}{h}+iv)|dv<\infty,\quad j=0,1,2.\end{equation}
We also need the estimates (1.9a) and (1.9b) of \cite{b2}, which hold under our truncation level.
That is
\begin{equation}\label{m7}
\P(\|\bbA\|\geq \mu_1)=o(n^{-l}),\ \P(\lambda_{\min}^\bbA\leq \mu_2)=o(n^{-l}),
\end{equation}
for any $\mu_1>(1+\sqrt{c})^2$,
$\mu_2<(1-\sqrt{c})^2$ and $l$. This implies that
\begin{equation}
\P(\|\bbA_k\|\geq \mu_1)=o(n^{-l}),\ \P(\lambda_{\min}^{\bbA_k}\leq \mu_2)=o(n^{-l}).\label{h8}
\end{equation}

Select a sequence of positive numbers $\varepsilon_n$
satisfying for some $\beta\in(0,1)$,
\begin{equation}\label{g21}
\varepsilon_n\downarrow0,\ \ \varepsilon_n\geq n^{-\beta}.
\end{equation} Then, as in \cite{b2}, we introduce a truncation version of $X_n(z)$ on the top half parts of the two vertical lines of the contour as follows:
$$
\hat{X}_n(z)=\begin{cases}X_n(z) & \text{for}\ u=a_r, a_l, v\in [n^{-1}\varepsilon_n,v_0h]\\
 X_n(a_r+in^{-1}\varepsilon_n) & \text{for}\ u=a_r,v\in [0,n^{-1}\varepsilon_n]\\
 X_n(a_l+in^{-1}\varepsilon_n) & \text{for}\ u=a_l,v\in [0,n^{-1}\varepsilon_n]
\end{cases}
$$
(one can similarly consider the bottom half parts of the two vertical lines).  It follows that with probability one
$$
\Big|\int K(\frac{x-z}{h})(\hat{X}_n(z)-\hat{X}_n(z))dz \Big|\leq Mh \varepsilon_n (\frac{1}{a_r-\lambda_{\max}}+\frac{1}{\lambda_{\min}-a_l})\rightarrow
0.
$$
Indeed, there is an extra $h$ above on the right hand which is needed in the proof of Theorem \ref{theo2}.

For $\hat{X}_n(z)$ on the two vertical lines $\gamma_2\cup \gamma_4$,
 (\ref{h6}) is still true because  there are at most finite number of points where the derivative of the corresponding truncation version of $\beta_k(z)$ do not exist.
  Moreover, for the truncation versions, the
higher moments of $\bbA^{-1}(z)$, $\bbA_k^{-1}(z)$ and
$\bbA_{kj}^{-1}(z)$ are bounded by (\ref{m7}) and (\ref{h8}) (see
(3.1) in \cite{b2}). As pointed out in the paragraph below (3.2) in
\cite{b2}, the moments of $\beta_1(z),\beta_{12}(z), \beta^{\tr}(z),
s_1^T\bbA_1^{-1}(z_1)\bbT\bbA_1^{-1}(z_2)\bbs_1$ are bounded as
well.  Using these facts, all the estimates holding for
$z\in\gamma_1\cup \gamma_2$ also holds for the case where $z\in
\gamma_r\cup\gamma_l$. Note that the length of the vertical line is
at most $h$. Via these facts, the arguments of the case $z\in
\gamma_r\cup\gamma_l$, two vertical lines, can follow from those of
the case $z\in\gamma_1\cup \gamma_2$ (here we omit the details) and
hence their limits have the same form as (\ref{f24}).

In the mean time, appealing to Cauchy's theorem gives
\begin{equation}\label{g53}
\frac{1}{h^2}\oint_{\mathcal{C}_1}\oint_{\mathcal{C}_2}
K'(\frac{x_1-z_1}{h})K'(\frac{x_2-z_2}{h})\ln\Big((z_1-z_2)\underline{m}_n^0(z_1)\underline{m}_n^0(z_2)\Big)dz_1dz_2=0,
\end{equation}
where the contour $\mathcal{C}_2$ is also a rectangle formed with
four vertices $a_l-\varepsilon\pm 2iv_0h$ and $a_{r}+\varepsilon\pm
2iv_0h$ with $\varepsilon>0$. One should note that the contour
$\mathcal{C}_2$ encloses the contour $\mathcal{C}_1$. Thus, in view
of (\ref{f24}), it remains to find the limit of the following
\begin{equation}\label{f62}
-\frac{1}{2h^2\pi^2}\oint_{\mathcal{C}_1}\oint_{\mathcal{C}_2}
K'(\frac{x_1-z_1}{h})K'(\frac{x_2-z_2}{h})\ln(\underline{m}_n^0(z_1)-\underline{m}_n^0(z_2))dz_1dz_2,
\end{equation}
which is done in Appendix 2.

\section{The convergence rate of $\E m_n(z)$ to $m(z)$}\label{con-Em}

The aim of this section is to develop a sharp order for $\E\Gamma_1^2$ and $\E\Gamma_1^3$
 which are crucial to the establishment of Theorem \ref{theo2} with the stringent bandwidth restriction. Throughout this section, let $z=u+iv$
with $u\in [a,b]$ and $v\geq M_1/\sqrt{n}$ where $M_1$ is a sufficiently large positive constant.

We begin with a series of Lemmas.

\begin{lemma}\label{lem11a}
Let
$$
g(z)=z+c_n-1+zc_nm_n^0(z)+zc_n\E m_n(z),
$$
where $m_n^0(z)$ stands for the one obtained from $m(z)$ with $c$ replaced by $c_n$. Then
\begin{equation}\label{m6}
|g(z)|\geq c_nv\mu_2\E\big(n^{-1}\tr\bbA^{-1}(z)\bbA^{-1}(\bar
z)\big)=c_n\mu_2\Im\big[\E \big(n^{-1}\tr\bbA^{-1}(z)\big)\big]
\end{equation}
and \begin{equation}\label{m10} |g(z)|\geq M\sqrt{v},
\end{equation}
where $0<\mu_2<(1-\sqrt{c_n})^2$.
\end{lemma}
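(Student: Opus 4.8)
The equality in (\ref{m6}) I would dispose of first: it is immediate from the resolvent identity $\Im\bbA^{-1}(z)=v\,\bbA^{-1}(z)\bbA^{-1}(\bar z)$, which after applying $n^{-1}\tr$ and $\E$ reads $\Im\big[\E\,n^{-1}\tr\bbA^{-1}(z)\big]=v\,\E\big(n^{-1}\tr\bbA^{-1}(z)\bbA^{-1}(\bar z)\big)$ (in particular this quantity is $\ge 0$). For the inequality my plan is to recast $g(z)$ so that the bound becomes a positivity statement about an imaginary part. Using the defining equation (\ref{m2}) of $m_n^0$ (with $c$ replaced by $c_n$), one has $z+c_n-1+c_nzm_n^0(z)=-1/m_n^0(z)$; and since $c_nm_n(z)=n^{-1}\tr\bbA^{-1}(z)$, the elementary identity $z\bbA^{-1}(z)=\bbA\bbA^{-1}(z)-\bbI$ gives $c_nz\E m_n(z)=\E\big(n^{-1}\tr\bbA\bbA^{-1}(z)\big)-c_n$. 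Hence
$$g(z)=-\frac{1}{m_n^0(z)}-c_n+\E\big(n^{-1}\tr\bbA\bbA^{-1}(z)\big).$$

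Next I would take imaginary parts. As $m_n^0$ is the Stieltjes transform of the Marcenko-Pastur law, $\Im m_n^0(z)>0$ for $v>0$, so $\Im[-1/m_n^0(z)]=\Im m_n^0(z)/|m_n^0(z)|^2>0$; and $\Im\big[\E\,n^{-1}\tr\bbA\bbA^{-1}(z)\big]=\E\big(n^{-1}\tr[\bbA\,\Im\bbA^{-1}(z)]\big)=v\,\E\big(n^{-1}\tr\bbA\bbA^{-1}(z)\bbA^{-1}(\bar z)\big)$. On the event $\{\lambda_{\min}^{\bbA}\ge\mu_2\}$ we have $\bbA\succeq\mu_2\bbI$, and $\bbA$ commutes with the positive semidefinite $\bbA^{-1}(z)\bbA^{-1}(\bar z)$, so there $n^{-1}\tr\bbA\bbA^{-1}(z)\bbA^{-1}(\bar z)\ge\mu_2\,n^{-1}\tr\bbA^{-1}(z)\bbA^{-1}(\bar z)$, while off it the integrand is still $\ge0$. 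By (\ref{m7}) the complementary event has probability $o(n^{-l})$, and since $v\ge M_1/\sqrt n$ the resulting correction $v\mu_2(c_n/v^2)o(n^{-l})$ is negligible, in any case dominated by $\Im m_n^0(z)/|m_n^0(z)|^2$. Combining,
$$|g(z)|\ge\Im g(z)\ge v\mu_2\,\E\big(n^{-1}\tr\bbA^{-1}(z)\bbA^{-1}(\bar z)\big)\ge c_nv\mu_2\,\E\big(n^{-1}\tr\bbA^{-1}(z)\bbA^{-1}(\bar z)\big),$$
which together with the equality above is (\ref{m6}).

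For (\ref{m10}) I would observe that the term $v\,\E\big(n^{-1}\tr\bbA\bbA^{-1}(z)\bbA^{-1}(\bar z)\big)$ is itself $\ge0$ (as $\bbA\succeq0$), so already $|g(z)|\ge\Im g(z)\ge\Im m_n^0(z)/|m_n^0(z)|^2$, and it suffices to invoke two standard facts about $m_n^0$ for $u\in[a,b]$, $|v|\le v_0$: $|m_n^0(z)|$ is bounded uniformly, and $\Im m_n^0(z)\ge M\sqrt v$ --- bounded below by a positive constant in the bulk, and $\asymp\sqrt v$ near the edges $a_n,b_n$ because of the square-root vanishing of the density. This yields $|g(z)|\ge M\sqrt v$. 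I expect this last step to be the main obstacle: once the identity for $g(z)$ is in hand, (\ref{m6}) is essentially bookkeeping, but (\ref{m10}) rests on a lower bound for $\Im m_n^0$ uniform down to the spectral edge --- where $g(z)$ is itself only of order $\sqrt v$ and the estimates are tight --- and on the boundary layer between $[a,b]$ and the finite-$n$ support $[a_n,b_n]$ being harmless at scale $v$.
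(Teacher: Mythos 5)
Your treatment of (\ref{m6}) is correct and is essentially the paper's own argument in different clothing: the paper takes the imaginary part of $z+c_n-1+zc_nm_n^0(z)+zc_nm_n(z)$ directly (its (\ref{m5})), splits the expectation over the event $D=\{\lambda_{\min}(\bbA)\le\mu_2\}$, and disposes of the bad event with (\ref{m7}); your rewriting via $z+c_n-1+zc_nm_n^0(z)=-1/m_n^0(z)$ and $zc_n\E m_n(z)=\E\big(n^{-1}\tr\bbA\bbA^{-1}(z)\big)-c_n$ is the same computation, and your handling of the indicator $I(\lambda_{\min}\ge\mu_2)$ is fine because (\ref{m7}) holds for every $l$.

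The gap is in (\ref{m10}). You bound $|g(z)|\ge\Im g(z)$ and then need $\Im m_n^0(u+iv)\ge M\sqrt v$ uniformly for $u\in[a,b]$. That bound is fine for $u$ inside, or within $O(v)$ of, the finite-$n$ support $[a_n,b_n]$, but the lemma's range $[a,b]$ uses the \emph{limiting} edges, and the hypotheses give no rate for $c_n\to c$. If, say, $b_n<b$ with $d:=b-b_n\gg v$ (e.g. $|c_n-c|\asymp n^{-1/4}$ while $v\asymp n^{-1/2}$), then for $u$ in the layer $(b_n,b]$ one has $\Im m_n^0(u+iv)\asymp v/\sqrt d\ll\sqrt v$, so the very boundary layer you flag as ``harmless at scale $v$'' is where your inequality fails; the same deficiency infects the nonnegative trace term, since its size is again governed by $\Im\E m_n(z)\approx\Im m_n^0(z)$. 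What survives at such points is the modulus, not the imaginary part: by (\ref{g18}), $z+c_n-1+2zc_nm_n^0(z)=\sqrt{(a_n-z)(b_n-z)}$, and since one of $|u-a_n|,|u-b_n|$ is of constant order while the other factor has modulus at least $v$, one gets (\ref{m12}), $|z+c_n-1+2zc_nm_n^0(z)|\ge M\sqrt v$ — outside the support it is the real part that does the work. The paper then writes $g(z)$ as this quantity plus $zc_n\big(\E m_n(z)-m_n^0(z)\big)$ and absorbs the latter with the G\"otze--Tikhomirov rate (\ref{m13}), $|\E m_n(z)-m_n^0(z)|\le M/(nv^{3/2})\le\rho_1\sqrt v$ for $v\ge M_1/\sqrt n$ with $M_1$ large. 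Your argument never invokes such a convergence-rate input, but some ingredient of that kind (or an extra assumption like $|c_n-c|=O(v)$) appears unavoidable; discarding the real part through $|g|\ge\Im g$ loses exactly the contribution that keeps $|g|$ of order $\sqrt v$ near and beyond the finite-$n$ edge.
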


\begin{proof}
It is straightforward to check that
\begin{equation}\label{m5}
\Im \big(z+c_n-1+zc_nm_n^0(z)+zc_nm_n(z)\big)\geq
v+c_nv\lambda_{\min}(\bbA)n^{-1}\tr\bbA^{-1}(z)\bbA^{-1}(\bar z).
\end{equation}
 Write
$$
g(z)=z+c_n-1+zc_nm_n^0(z)+zc_n\E\big[m_n(z)I(D)\big]+zc_n\E\big[m_n(z)I(D^c)\big],
$$
where the event $D=(\lambda_{\min}(\bbA)\leq\mu_2)$. Then (\ref{m6})
follows from (\ref{m5}) and (\ref{m7}). By (\ref{g18})
\begin{equation}\label{m12}
|z+c_n-1+2zc_nm_n^0(z)|\geq M\sqrt{v}.
\end{equation}
On the other hand, it is proved in (6.109) of \cite{g2} that
\begin{equation}\label{m13}
|\E m_n(z)-s(z)|\leq\frac{M}{nv^{3/2}}\leq \rho_1\sqrt{v},
\end{equation}
where $\rho_1$ is sufficiently small. We then conclude from (\ref{m12}) and (\ref{m13}) that (\ref{m10}) holds.
\end{proof}

\begin{lemma}\label{lem14}
\begin{equation}\label{h48*}
|\E n^{-1}\tr\bbA_1^{-2}(z)|\leq M/\sqrt{v},\quad  n^{-1}\sum\limits_{k=1}^n\big|\big(\E n^{-1}\tr\bbA_k^{-1}(z)\underline{\bbA}_k^{-1}(z)\big)^2\big|\leq  M/v,
\end{equation}
\begin{equation}\label{h48}
 n^{-1}\sum\limits_{k=1}^n|\E n^{-1}\tr\bbA_k^{-2}(z)\underline{\bbA}_k^{-1}(z)|\leq\frac{M}{v|z+c_n-1+2zc_nm_n^0(z)|},
 \end{equation}
 \begin{equation}\label{h48a}
 n^{-1}\sum\limits_{k=1}^n|\E n^{-1}\tr\bbA_k^{-1}(z)\underline{\bbA}_k^{-1}(z)\E n^{-1}\tr\bbA_k^{-2}(z)\underline{\bbA}_k^{-1}(z)|\leq M v^{-3/2},
\end{equation}
and
\begin{equation}\label{h48**}
 |(ng(z))^{-1}\sum\limits_{k=1}^n\E n^{-1}\tr\bbA_k^{-2}(z)\underline{\bbA}_k^{-2}(z)|\leq\frac{M}{v^{2}|z+c_n-1+2zc_nm_n^0(z)|}.
 \end{equation}
\end{lemma}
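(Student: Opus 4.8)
The four estimates are all about traces of products of the resolvents of $\bbA_k$ and $\underline{\bbA}_k$ together with their $z$-derivatives, since $\bbA_k^{-2}(z)=\partial_z\bbA_k^{-1}(z)$. The plan is to reduce everything to two inputs already in hand: the rate $|\E m_n(z)-m_n^0(z)|\le M(nv^{3/2})^{-1}$ of (\ref{m13}), and the identity (\ref{f33}) of Section \ref{finite-dim}, whose derivation uses only $v>0$ and the truncation and hence applies for $z_1,z_2$ in the present region $\{u\in[a,b],\ v\ge M_1/\sqrt n\}$:
\[
\E\, n^{-1}\tr\bbA_k^{-1}(z_1)\underline{\bbA}_k^{-1}(z_2)=\frac{b_n(z_1,z_2)}{\big(1-\frac{k-1}{n}b_n(z_1,z_2)\big)z_1z_2\underline{m}_n^0(z_1)\underline{m}_n^0(z_2)}+e_k(z_1,z_2),
\]
with $e_k$ analytic in each variable and carrying the pointwise bounds of the Section \ref{finite-dim} error terms, which come with a weight $|z+c_n-1+2c_nzm_n^0(z)|^{-1}$ (cf. (\ref{h19}), (\ref{h26})). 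Write $w=w(z):=z+c_n-1+2c_nzm_n^0(z)=\sqrt{(a_n-z)(b_n-z)}$ by (\ref{g18}). The rest is elementary: $|w(z)|\ge M\sqrt v$ by (\ref{g18})--(\ref{m12}); $(\underline{m}_n^0)'(z)=O(|w(z)|^{-1})$ on differentiating (\ref{h31}); and $1-b_n(z,z)=(\underline{m}_n^0(z))^2/(\underline{m}_n^0)'(z)$ (from (\ref{h31}) and the definition of $b_n$), so $|1-b_n(z,z)|\asymp|w(z)|$. Combining these with (\ref{f56}) and a Riemann-sum comparison as in (\ref{h20}) (exploiting the unimodality of $t\mapsto|1-tb_n(z,z)|^{-1}$), one gets $n^{-1}\sum_k|1-\frac{k-1}{n}b_n(z,z)|^{-1}=O(\ln v^{-1})$ and, for $p\ge2$,
\[
n^{-1}\sum_{k=1}^n\Big|1-\tfrac{k-1}{n}b_n(z,z)\Big|^{-p}\le\int_0^1|1-tb_n(z,z)|^{-p}\,dt+O(v^{-p}n^{-1})=O\big(|w(z)|^{-(p-1)}\big).
\]

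For the first bound in (\ref{h48*}) I would note that (\ref{b23}), (\ref{m17}) and a Cauchy estimate give $n^{-1}\tr\bbA_1^{-2}(z)=n^{-1}\tr\bbA^{-2}(z)+O(n^{-1}v^{-2})$, and $\E n^{-1}\tr\bbA^{-2}(z)=c_n(d/dz)\E m_n(z)$; differentiating (\ref{m13}) on a circle of radius $v/2$ bounds $(d/dz)(\E m_n-m_n^0)(z)$ by $M(nv^{5/2})^{-1}=O(v^{-1/2})$ (this is where $v\ge M_1/\sqrt n$ enters), while $(m_n^0)'(z)=O(v^{-1/2})$ from the explicit form of $m$ in (\ref{m2*}), so the claim follows. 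The second bound in (\ref{h48*}) is the identity above at $z_1=z_2=z$, squared and averaged over $k$: the main term contributes $O(v^{-1})$ by the $p=2$ case, and the cross and $e_k$ contributions are of smaller order.

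For (\ref{h48}) I would differentiate the identity in $z_1$ at $z_1=z_2=z$, controlling $\partial_{z_1}e_k$ by Cauchy's formula; every term of $\partial_{z_1}(\mathrm{main})$ carries one factor $(\underline{m}_n^0)'(z)=O(|w(z)|^{-1})$ times $(1-\frac{k-1}{n}b_n(z,z))^{-j}$ with $j\in\{1,2\}$, so averaging over $k$ (and using $\ln v^{-1}\le Mv^{-1}$) gives $O(v^{-1}|w(z)|^{-1})$. Then (\ref{h48a}) is the product of the quantities in (\ref{h48*}) and (\ref{h48}), i.e. $O(|w(z)|^{-1})\cdot|1-\frac{k-1}{n}b_n(z,z)|^{-3}$ on average, hence $O(|w(z)|^{-3})\le Mv^{-3/2}$ by the $p=3$ case. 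Finally, for (\ref{h48**}) I would take $\partial_{z_1}\partial_{z_2}$ of the identity at $z_1=z_2=z$, obtaining $n^{-1}\sum_k\E n^{-1}\tr\bbA_k^{-2}(z)\underline{\bbA}_k^{-2}(z)=O(|w(z)|^{-4})$ up to error, and then divide by $|g(z)|$, which by (\ref{m6}) satisfies $|g(z)|\gtrsim\Im\big(\E n^{-1}\tr\bbA^{-1}(z)\big)\asymp\Im m(z)\asymp|w(z)|$; this yields $O(|w(z)|^{-5})\le Mv^{-2}|w(z)|^{-1}$.

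The main obstacle will be the error bookkeeping. Every pass to a $z$-derivative through Cauchy's formula costs a factor $v^{-1}$ on the error terms, and the advertised bounds — the $v^{-1/2}$ of (\ref{h48*}) and the $|w(z)|^{-1}$-weighted ones, which near the spectral edge where $|w(z)|\asymp\sqrt v$ are substantially sharper than the crude resolvent bounds — leave little slack. One must therefore carry the weights $|w(z)|^{-1}$ through all the Section \ref{finite-dim} error estimates, verify that those estimates hold in pointwise form on a whole $z$-neighborhood (so that Cauchy's formula applies), and, in the doubly-differentiated case (\ref{h48**}), keep track of the resulting error terms precisely. This is exactly where the hypothesis "$v\ge M_1/\sqrt n$ with $M_1$ sufficiently large" is consumed: it is the margin that keeps the differentiated error contributions below the main terms.
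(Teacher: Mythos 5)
Your treatment of the first bound in (\ref{h48*}) is correct and is a genuinely simpler route than the paper's: writing $\E n^{-1}\tr\bbA^{-2}(z)=c_n\frac{d}{dz}\E m_n(z)$, differentiating the rate (\ref{m13}) on a circle of radius $v/2$ (which only needs that bound on a slightly larger region than stated) and using $(m_n^0)'(z)=O(v^{-1/2})$ does give $M/\sqrt v$, whereas the paper re-derives a self-consistent equation (\ref{f33*}); likewise your second bound in (\ref{h48*}) is essentially the paper's (\ref{g25})--(\ref{g26}). The rest of your plan, however, has two genuine gaps. First, the input you build on -- a pointwise, full-expectation, two-variable version of (\ref{f33}) with error $O((nv^2)^{-1})$ (or with your weight $|w(z_1)|^{-1}$, $w(z)=z+c_n-1+2zc_nm_n^0(z)$) valid on complex neighbourhoods of the point $z$ -- is not what Section \ref{finite-dim} supplies. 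There the error terms $A_7$--$A_{12}$ are random, contain $\E_k(\Gamma_{kj})$-type contributions, and are controlled only after integration against $h^{-1}|K(\cdot)|$ through (\ref{h28}) and (\ref{h19}) (see (\ref{h26}), (\ref{h29}), (\ref{g33})). Converting this into the pointwise statement you need in the regime $v\geq M_1/\sqrt n$ -- exploiting $\E\Gamma_{kj}=0$ under the full expectation and the eighth-moment bounds of Lemma \ref{lem1} -- is precisely the re-derivation that the paper carries out from scratch for each differentiated trace (the displays leading to (\ref{f33*}), (\ref{g25}), (\ref{g5}) and the equation preceding the conclusion of (\ref{h48**})); so your proposal presupposes most of what the lemma's proof actually consists of.

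Second, even granting that pointwise input, the Cauchy-differentiation route does not reach (\ref{h48**}) as stated. Each Cauchy pass costs $v^{-1}$ on the error, so $|\partial_{z_1}\partial_{z_2}E_k|\lesssim (nv^4)^{-1}$; solving the doubly differentiated identity you must divide by $1-\frac{k-1}{n}b_n(z,z)$ and average over $k$, which costs a further factor $n^{-1}\sum_k|1-\frac{k-1}{n}b_n|^{-1}\asymp\ln(1/|w(z)|)$, and with only the crude $|g(z)|\gtrsim|w(z)|$ this leaves an error contribution of order $\ln(1/|w|)/(nv^4|w|)$, exceeding the target $Mv^{-2}|w|^{-1}$ by the factor $\ln(1/|w|)/(nv^2)$. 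That factor is unbounded near the spectral edges when $nv^2=O(1)$, i.e.\ at the bottom of the stated range $v\asymp n^{-1/2}$, and in the application it is exactly the regime of Theorem \ref{theo2}, where $nh^2=o(\sqrt{\ln h^{-1}})$, so the logarithmic loss is not harmless for the mean-correction budget in (\ref{g44}). (For (\ref{h48}) the same bookkeeping survives, because the excess there is $|w|\ln(1/|w|)/(nv^2)$, which is bounded; the obstruction is specific to the doubly differentiated quantity.) The paper avoids this by not differentiating the Section \ref{finite-dim} identity at all: it derives for $\E n^{-1}\tr\bbA_k^{-2}\underline{\bbA}_k^{-1}$ and for $g^{-1}\E n^{-1}\tr\bbA_k^{-2}\underline{\bbA}_k^{-2}$ their own equations directly from (\ref{f21}), in which the factor is the $k$-independent $1-b_n(z,z)$ and the errors are $O(v^{-1})$ and $O(v^{-2})$, and it carries the division by $g(z)$ through every moment estimate via the key fact (\ref{g29}) rather than invoking only $|g(z)|\geq M\sqrt v$ at the end. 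To repair your argument you would either have to prove a pointwise error bound for the two-variable identity that improves near the edge, or restructure so that the error is never weighted by the $k$-averaged $|1-\frac{k-1}{n}b_n|^{-1}$ after two Cauchy differentiations -- which in practice means reproducing the paper's direct expansion.
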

\begin{remark}
From the derivation of (\ref{h48**}) we see that the left side of the inequality of (\ref{h48**}) multiplied  by $g(z)$ is still less than the right side of the inequality.
\end{remark}

\begin{proof}
 Consider $\E n^{-1}\tr\bbA_k^{-2}(z)$ first. When replacing $\underline{\bbA}_k^{-1}(z_2)$ by $\bbA_k^{-1}(z)$, the derivation in the last section for $\E_k(n^{-1}\tr\bbA_k^{-1}(z_1)$\ $\underline{\bbA}_k^{-1}(z_2))$ also
works for $\E n^{-1}\tr\bbA_k^{-2}(z)$ except (\ref{m48}),
(\ref{h38}) and the argument starting from (\ref{h37}). It is
unnecessary to distinguish between the cases $j<k$ and $j>k$ in the
current case and so we need not consider (\ref{m48}). By Lemma
\ref{lem1}, (\ref{f7}) and (\ref{f13}), (\ref{h38}) reduces to
\begin{eqnarray*}
&&n^{-1}\sum\limits_{j}\E\big[\beta_{kj}(z)\xi_{kj}(z)\bbs_j^T\bbA^{-2}_{kj}(z)
\bbs_j\bbs_j^T\bbA_{kj}^{-1}\bbs_ju_n(z)\big]\\
&=&n^{-1}b_{12}u_n(z)\sum\limits_{j<k} \big[\E
\big(\eta_{kj}\bbs_j^T\bbA^{-2}_{kj}\bbs_jn^{-1}\tr\bbA_{kj}^{-1}\big)+\E
\big(\Gamma_{kj}\Gamma^{(2)}_{kj}
\big)\E n^{-1}\tr\bbA_{kj}^{-1}\big]+O(n^{-1}v^{-2})\\
&=&O(n^{-1}v^{-2}).
\end{eqnarray*}
Moreover, from Lemma \ref{lem1}, (\ref{h37}) turns out to be
\begin{eqnarray}
&&B_1(z)+n^{-3}b_{12}(z)u_n(z)\sum\limits_{j}\E\big[\tr\bbA_{kj}^{-2}(z)\tr\bbA_{kj}^{-1}(z)\big] \nonumber \\
&=&B_1(z)+b_{12}(z)u_n(z)n^{-1}\E\tr\bbA_{1}^{-1}(z)n^{-1}\E\tr\bbA_{1}^{-2}(z)+O(n^{-1}v^{-2}).\label{g4}
\end{eqnarray}
Therefore, as in (\ref{f33}), we have
\begin{equation}\label{f33*}
n^{-1}\E\big[\tr \bbA_k^{-2}(z) \big]\times\big[1-b_n(z,z)\big]
=\frac{b_n(z,z)}{(z\underline{m}_n^0(z))^2}+O(n^{-1}v^{-2}),
\end{equation}
where $b_n(z,z)$ is obtained from $b_n(z_1,z_2)$ from (\ref{f33}) with $z_1=z_2=z$.
From (\ref{h31}) and (\ref{m45}) one may verify that
\begin{equation}
1-b_n(z,z)=-(z+c_n-1+2zc_nm_n^0(z))\frac{\underline{m}_n^0(z)}{1+\underline{m}_n^0(z)}. \label{g16}
\end{equation}
It follows from (\ref{f33*}), (\ref{g16}), (\ref{f58}), (\ref{m12}),
(\ref{m46}) and Lemma \ref{lem1} that
\begin{equation}\label{h46}
\big|n^{-1}\E\tr \bbA_1^{-2}(z)\big|\leq M/\sqrt{v}.
\end{equation}

As for the second inequality in (\ref{h48*}), checking the above
proof for $\E n^{-1}\tr\bbA_1^{-2}(z)$ and the last section for
$\E_k(n^{-1}$ $\tr\bbA_k^{-1}(z_1)\underline{\bbA}_k^{-1}(z_2))$ and
referring to (\ref{f33}) we have
\begin{equation}\label{g25}
n^{-1}\E\big[\tr \bbA_k^{-1}\underline{\bbA}_k^{-1}
\big]\times\big[1-\frac{k-1}{n}b_n(z,z)\big]
=\frac{b_n(z,z)}{(z\underline{m}_n^0(z))^2}+O(n^{-1}v^{-2}).
\end{equation}
As in (\ref{h20}) and (\ref{h30}) we obtain
\begin{eqnarray}
n^{-1}\sum\limits_{k=1}^n (1-\frac{k-1}{n}|b_n(z,z)|)^{-2}&=&\int_0^1 (1-t|b_n(z,z)|)^{-2}dt+O(\frac{1}{nv^3})
\non&=&\frac{|b_n(z,z)|}{1-|b_n(z,z)|}+O(\frac{1}{nv^3})=O(\frac{1}{v})\label{g24}
\end{eqnarray}
It follows from (\ref{g25}) and (\ref{g24}) that
\begin{equation}
n^{-1}\sum\limits_{k=1}^n|(\E n^{-1}\tr\bbA_k^{-1}(z)\underline{\bbA}_k^{-1}(z))^2|=O(v^{-1}).\label{g26}
\end{equation}

Consider (\ref{h48}) next. The strategy is to use (\ref{f21}). From (\ref{f7}) and Lemma \ref{lem1} we obtain
$$\E\big[n^{-1}\tr\underline{\bbA}_k^{-1}\bbA_k^{-1}D(z)\big]=\frac{b_{12}u_n(z)}{n^2}\sum\limits_{j\neq k}^n\E\big[\bbs_j^T\bbA^{-1}_{kj}
\underline{\bbA}_{k}^{-1}\bbA_k^{-1}\bbA^{-1}_{kj}\bbs_j\beta_{kj}\big]=O(v^{-1}).
$$

 Apply (\ref{f13}) and (\ref{f6}) to write
\begin{equation}\label{h11}\E\big[n^{-1}\tr\underline{\bbA}_k^{-1}\bbA_k^{-1}C(z)\big]=
n^{-1}u_n(z)b_{12}\sum\limits_{j\neq k}^n
\E\big[(\xi_{kj}+\beta_{12}(\xi_{kj})^2)\bbs_j^T\bbA^{-1}_{kj}\underline{\bbA}_k^{-1}\bbA_k^{-1}\bbs_j\big]\end{equation}
$$
=\frac{u_n(z)b_{12}(z)}{n}\sum\limits_{j<k}^n(C_3+C_4+C_5+C_6+C_7+C_8)+\frac{u_n(z)b_{12}(z)}{n}\sum\limits_{j>k}^n(C_9+C_{10}),
$$
where
$$
C_3=\E\big[\eta_{kj}\centre^j(\bbs_j^T\bbA^{-1}_{kj}\underline{\bbA}_{kj}^{-1}\bbA_{kj}^{-1}\bbs_j\big)
+\Gamma_{kj}(n^{-1}\tr\bbA^{-2}_{kj}\underline{\bbA}_{kj}^{-1}
-\E n^{-1}\tr\bbA^{-2}_{kj}\underline{\bbA}_{kj}^{-1})\big]
$$
$$
C_4=\E\big[\xi_{kj}(\bbs_j^T\bbA^{-1}_{kj}\underline{\bbA}_{kj}^{-1}\bbs_j)^2\underline{\beta}_{kj}\big],\
C_5=\E\big[\xi_{kj}(\bbs_j^T\bbA^{-1}_{kj}\underline{\bbA}_{kj}^{-1}\bbs_j)^2\bbs_j^T\bbA^{-1}_{kj}\bbs_j\beta_{kj}
\underline{\beta}_{kj}\big],
$$
$$
C_6=\E\big[\xi_{kj}\bbs_j^T\bbA^{-1}_{kj}\underline{\bbA}_{kj}^{-1}\bbA^{-1}_{kj}\bbs_j\bbs_j^T\bbA^{-1}_{kj}\bbs_j\beta_{kj}\big],\
C_7=\E\big[\beta_{kj}(\xi_{kj})^2\bbs_j^T\bbA^{-1}_{kj}\underline{\bbA}_k^{-1}\bbA_{kj}^{-1}\bbs_j\big]
$$
$$
C_8=\E\big[\beta_{kj}^2(\xi_{kj})^2\bbs_j^T\bbA^{-1}_{kj}\underline{\bbA}_k^{-1}\bbA_{kj}^{-1}\bbs_j\bbs_j^T\bbA_{kj}^{-1}\bbs_j\big],
$$
and
$$
C_9=\E\big[\beta_{kj}\xi_{kj}\bbs_j^T\bbA^{-1}_{kj}\underline{\bbA}_k^{-1}\bbA^{-1}_{kj}\bbs_j\big],\ C_{10}=\E\big[\beta_{kj}^2\xi_{kj}\bbs_j^T\bbA^{-1}_{kj}\underline{\bbA}_k^{-1}\bbA^{-1}_{kj}\bbs_j\bbs_j^T\bbA^{-1}_{kj}\bbs_j\big].
$$
It follows from Lemmas \ref{lem1}, \ref{lem7}, (\ref{f13}), (\ref{f6}) and (\ref{f7}) that $|C_j|\leq M/v,j=3,6,7,8,9,10$ and
$$
C_j=\big(\E n^{-1}\tr\bbA_k^{-1}(z)\underline{\bbA}_k^{-1}(z)\big)^2C_{j1}+O(v^{-1}),\quad j=4,5,
$$
where $|C_{j1}|\leq M/\sqrt{nv}$. Therefore
\begin{equation}\label{h11*}
\E\big[n^{-1}\tr\underline{\bbA}_k^{-1}\bbA_k^{-1}C(z)\big]=\big(\E n^{-1}\tr\bbA_k^{-1}(z)\underline{\bbA}_k^{-1}(z)\big)^2A_{13}+O(v^{-1}),
\end{equation}
where $|A_{13}|\leq M/\sqrt{nv}$.

Next we use (\ref{f13}) and (\ref{f6}) to write
\begin{eqnarray*}
\E \Big[n^{-1}\tr\underline{\bbA}_k^{-1}\bbA_k^{-1}B(z)\Big]&=&
n^{-1}u_n(z) \sum\limits_{j\neq k}^n\E \Big[\centre^j\big(\bbs_j^T\bbA^{-1}_{kj}\underline{\bbA}_k^{-1}\bbA_k^{-1}\bbs_j\big)\Big]\\
&=&\frac{u_n(z)}{n}\sum\limits_{j<k}^n(B_3+B_4+B_5)+\frac{u_n(z)}{n}\sum\limits_{j>k}^nB_6
\end{eqnarray*}
where
$$
B_3=\E\big[(\bbs_j^T\bbA^{-1}_{kj}\underline{\bbA}_{kj}^{-1}\bbs_j)^2\underline{\beta}_{kj}-
n^{-1}\bbs_j^T\underline{\bbA}_{kj}^{-1}\bbA_{kj}^{-2}\underline{\bbA}_{kj}^{-1}\bbs_j\underline{\beta}_{kj}\big]
$$
$$
B_4=\E\big[(\bbs_j^T\bbA^{-1}_{kj}\underline{\bbA}_{kj}^{-1}\bbs_j)^2\bbs_j^T\bbA_{kj}^{-1}\bbs_j\underline{\beta}_{kj}\beta_{kj}-
n^{-1}\bbs_j^T\bbA_{kj}^{-2}\underline{\bbA}_{kj}^{-1}\bbs_j\bbs_j^T\bbA_{kj}^{-1}
\underline{\bbA}_{kj}^{-1}\bbs_j\underline{\beta}_{kj}\beta_{kj}\big],
$$
$$
B_5=-\E\big[\bbs_j^T\bbA^{-1}_{kj}\underline{\bbA}_{kj}^{-1}\bbA_{kj}^{-1}\bbs_j\bbs_j^T\bbA_{kj}^{-1}\bbs_j\beta_{kj}-
n^{-1}\bbs_j^T\bbA_{kj}^{-2}\underline{\bbA}_{kj}^{-1}\bbA_{kj}^{-1}\bbs_j\beta_{kj}\big]
$$
and
$$
B_6=-\E(\bbs_j^T\bbA^{-1}_{kj}\underline{\bbA}_k^{-1}\bbA_{kj}^{-1}\bbs_j\bbs_j^T
\bbA_{kj}^{-1}\bbs_j\beta_{kj}-n^{-1}\bbs_j^T\bbA_{kj}^{-2}
\underline{\bbA}_k^{-1}\bbA_{kj}^{-1}\bbs_j\beta_{kj}).
$$
In view of Lemmas \ref{lem1}, \ref{lem7}, (\ref{f13}), (\ref{f10}) and (\ref{f7}) we have
$$
B_j=\big(\E n^{-1}\tr\bbA_k^{-1}\underline{\bbA}_k^{-1}\big)^2B_{j1}+O(v^{-1}),\quad j=3,4,
$$
where $|B_{j1}|\leq M$ and
$$
B_j=-b_{12}\E n^{-1}\tr\bbA_k^{-1}\E(n^{-1}\tr\bbA_k^{-2}\underline{\bbA}_k^{-1})+O(v^{-1}),\quad j=5,6.
$$
Thus
\begin{eqnarray}
&&\E\big(n^{-1}\tr\underline{\bbA}_k^{-1}\bbA_k^{-1}B(z)\big)=\big(\E(n^{-1}\tr\bbA_k^{-1}\underline{\bbA}_k^{-1})\big)^2A_{14}\label{g3}\\
&&-b_{12}u_n(z)\E n^{-1}\tr\bbA_k^{-1}\E(n^{-1}\tr\bbA_k^{-2}\underline{\bbA}_k^{-1})+O(v^{-1}), \nonumber
\end{eqnarray}
where $|A_{14}|\leq M$.

Note that the coefficient of $\E(n^{-1}\tr\bbA_k^{-2}\underline{\bbA}_k^{-1})$ in (\ref{g3}) is the same as that of $\E n^{-1}\tr\bbA_k^{-2}$ in (\ref{g4}). Summarizing the above we have thus obtained
\begin{equation}\label{g5}
\E n^{-1}\tr\bbA_k^{-2}\underline{\bbA}_k^{-1}(1-b_n(z,z))=-u_n\E n^{-1}\tr\bbA_k^{-1}\underline{\bbA}_k^{-1}
+\big(\E n^{-1}\tr\bbA_k^{-1}\underline{\bbA}_k^{-1}\big)^2A_{15}+O(v^{-1}),
\end{equation}
where $|A_{15}|\leq M$. This, together with (\ref{h48*}), implies (\ref{h48}).

As for (\ref{h48a}), in view of (\ref{g5}) we have
\begin{eqnarray}
&\qquad (1-b_n(z,z))\E(\frac{1}{n}\tr\bbA_k^{-2}\underline{\bbA}_k^{-1})\E(n^{-1}\tr\bbA_k^{-1}\underline{\bbA}_k^{-1})
=a_1(z)\big[\E(n^{-1}\tr\bbA_k^{-1}\underline{\bbA}_k^{-1})\big]^2\label{g23}\\
&+a_2(z)\big[\E(n^{-1}\tr\bbA_k^{-1}\underline{\bbA}_k^{-1})\big]^3
+a_3(z)\E(n^{-1}\tr\bbA_k^{-1}\underline{\bbA}_k^{-1}),\nonumber
\end{eqnarray}
where $|a_1(z)|\leq M, |a_2(z)|\leq M$ and $|a_3(z)|\leq M/v$.
Moreover we conclude from (\ref{f56}), (\ref{g24}), (\ref{g16}) and Lemma \ref{lem11a} that
\begin{eqnarray}
n^{-1}\sum\limits_{k=1}^n (1-\frac{k-1}{n}|b_n(z,z)|)^{-3}&=&\int_0^1 (1-t|b_n(z,z)|)^{-3}dt+O(v^{-1})
\non&=&(1-|b_n(z,z)|)^{-2}+O( v^{-1})=O(v^{-1})\label{g24*}.
\end{eqnarray}
Then (\ref{h48a}) follows from (\ref{g25}), (\ref{g23}), (\ref{g24*}), and (\ref{g16}).

To establish (\ref{h48**}), we observe that Lemmas \ref{lem11a} and \ref{lem1} imply
\begin{eqnarray}
&&|\E n^{-1}\tr\bbA^{-1}(z)\bbA^{-1}(\bar z)|/|g(z)|\leq M/v,\label{g29}
\\
&&\E| n^{-1}\tr\bbA^{-1}(z)\bbA^{-1}(\bar z)|^k/|g(z)|
 \leq M(v^k|g(z)|)^{-1} \big[E|\Gamma|^k+|\Im \big(\E n^{-1}\tr\bbA^{-1}\big)|^k\big] \nonumber\\
 &&\leq M/v^k, \quad k=2,4,8.\nonumber
\end{eqnarray}
This key fact implies that whenever
$$\E n^{-1}\tr\bbA^{-1}(z)\bbA^{-1}(\bar z) \ \text{and} \ \
\E| n^{-1}\tr\bbA^{-1}(z)\bbA^{-1}(\bar z)|^k,\ k=2,4,8$$
 appear, dividing them by
$|g(z)|$ does not change their original sizes. As consequences of this fact, applying (\ref{g29}) and (\ref{f10}) then ensures that (\ref{a39}) and (\ref{b18}) are still true when we divide the expectations in  them by $|g(z)|$. For example, by (\ref{g29}) and (\ref{f10}) we have for $m=2,4,6,8$
\begin{eqnarray}
|g(z)|^{-1}\E|\eta_{kj}|^m&\leq& M\big(n^{m/2}|g(z)|\big)^{-1}\E|n^{-1}\tr\bbA^{-1}(z)\bbA^{-1}(\bar z)|^\frac{m}{2} \label{g28}\\
&& \quad +M\big(n^{m/2}v^{m/2}\big)^{-1}
\leq M\big(n^{m/2}v^{m/2}\big)^{-1}. \nonumber
\end{eqnarray}
We also provide an argument in Lemma \ref{lem7} for (\ref{a39}) when the expectation in it is divided by $g(z)$. Moreover by (\ref{g29}) and (\ref{f10}) one may verify that the first three conclusions of Lemma \ref{lem7}
are still true when the expectations in them are divided by $|g(z)|$ as in the last claim of Lemma \ref{lem7}. From now on until the end of this lemma we mean the corresponding expressions divided by $|g(z)|$
 whenever we quote (\ref{a39}), (\ref{b18}) and Lemma \ref{lem7}.

Now we resort to use (\ref{f21}) again. From (\ref{g29}), (\ref{f13}) and (\ref{f7}), Lemmas \ref{lem1} and \ref{lem11a} we have
$$|g(z)|^{-1}\big|\E\big[n^{-1}\tr\underline{\bbA}_k^{-2}\bbA_k^{-1}D(z)\big]\big|\leq M\big(v^3n^2|g(z)|\big)^{-1}\sum\limits_{j\neq k}^n\E\big(\|\bbs_j^T\bbA_{kj}^{-1}\|^2|\beta_{kj}|\big)
$$
$$
\leq M\big(v^3n^2|g(z)|\big)^{-1}\sum\limits_{j\neq k}^n\big[\E n^{-1}\tr\bbA_{kj}^{-1}(z)\bbA_{kj}^{-1}(\bar z))|b_{12}|
+\E\big(\|\bbs_j^T\bbA_{kj}^{-1}\|^2||b_{12}\beta_{kj}\xi_{kj}|\big)\big]\leq M/v^2,
$$
where $\|\cdot\|$ denotes the spectral norm or Euclidean norm of matrices or vectors.
As in (\ref{h11*}) and (\ref{g3}), by (\ref{a39}), (\ref{f10}), (\ref{b18}), (\ref{f13}) and Lemmas \ref{lem7}, \ref{lem1} one may verify that
$$g(z)^{-1}\E\big[n^{-1}\tr\underline{\bbA}_k^{-2}\bbA_k^{-1}C(z)\big]=\big(\E n^{-1}\tr\bbA_k^{-1}\underline{\bbA}_k^{-1}\big)^2A_{16}
$$$$+
\E(n^{-1}\tr\bbA_k^{-1}\underline{\bbA}_k^{-1})\E(n^{-1}\tr\bbA_k^{-1}\underline{\bbA}_k^{-2})A_{17}+O(v^{-2}),
$$
and that
$$
g(z)^{-1}\E\big[n^{-1}\tr\underline{\bbA}_k^{-2}\bbA_k^{-1}B(z)\big]
=\big(\E n^{-1}\tr\bbA_k^{-1}\underline{\bbA}_k^{-1}\big)^2A_{18}+
$$$$\E(n^{-1}\tr\bbA_k^{-1}\underline{\bbA}_k^{-1})\E(n^{-1}\tr\bbA_k^{-1}\underline{\bbA}_k^{-2})A_{19}-
b_{12}u_n(z)\E n^{-1}\tr\bbA_k^{-1}\underline{\bbA}_k^{-1}\E(n^{-1}\tr\bbA_k^{-2}\underline{\bbA}_k^{-2})+O(v^{-2}),
$$
where $|A_{16}|\leq M$, $|A_{17}|\leq M/\sqrt{nv}$, $|A_{18}|\leq M/\sqrt{v}$ and $|A_{19}|\leq M$. These imply that
$$
g(z)^{-1}\E(n^{-1}\tr\bbA_k^{-2}\underline{\bbA}_k^{-2})(1-b_n(z,z))=-u_n(z)g(z)^{-1}\E(n^{-1}\tr\bbA_k^{-1}\underline{\bbA}_k^{-2})
$$
$$
+\big(\E n^{-1}\tr\bbA_k^{-1}\underline{\bbA}_k^{-1}\big)^2A_{20}+\E(n^{-1}\tr\bbA_k^{-1}\underline{\bbA}_k^{-1})\E(n^{-1}\tr\bbA_k^{-1}\underline{\bbA}_k^{-2})A_{21}
+O(v^{-2}),
$$
where $|A_{20}|\leq M/\sqrt{v}$ and $|A_{21}|\leq M$. Thus (\ref{h48**}) follows from (\ref{h48*}), (\ref{h48}) and (\ref{h48a}) immediately.
\end{proof}

\begin{lemma}\label{lem15}
\begin{equation}\label{m23}
\E|\bbe_i^T\bbA_1^{-1}(z)\bbe_j-\E\bbe_i^T\bbA_1^{-1}(z)\bbe_j|^2 \leq M/(nv^2),
\end{equation}
\begin{equation}\label{m23*}
\E|\bbe_i^T\bbA_1^{-2}(z)\bbe_j-\E\bbe_i^T\bbA_1^{-2}(z)\bbe_j|^2\leq M/(nv^4),
\end{equation}
\begin{equation}\label{m23**}
\E|\bbe_i^T\bbA_1^{-1}(z)\bbe_i-\E\bbe_i^T\bbA_1^{-1}(z)\bbe_i|^4 \leq M/(nv^2)
\end{equation}
and
\begin{equation}\label{m23***}
\E|\bbe_i^T\bbA_1^{-2}(z)\bbe_i-\E\bbe_i^T\bbA_1^{-2}(z)\bbe_i|^4 \leq M/(nv^6),
\end{equation}
where $\bbe_i$ is the $p$-dimensional vector with the $i$-th coordinate being $1$ and the remaining being zero.
\end{lemma}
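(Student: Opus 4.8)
The plan is to prove all four bounds by the martingale decomposition of a resolvent entry over the $n-1$ columns $\bbs_2,\dots ,\bbs_n$ that define $\bbA_1$, combined with a bootstrap that controls the moments of the individual diagonal resolvent entries so that no extra powers of $v^{-1}$ are lost.

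Fix $i,j$ and write $Z=\bbe_i^T\bbA_1^{-1}(z)\bbe_j$. Since $\bbA_{1k}^{-1}$ is free of $\bbs_k$, one has $Z-\E Z=\sum_{k=2}^n D_k$ with $D_k=-(\E_k-\E_{k-1})\big[\beta_{1k}(\bbe_i^T\bbA_{1k}^{-1}\bbs_k)(\bbs_k^T\bbA_{1k}^{-1}\bbe_j)\big]$, by the analogue of \eqref{f6}. By Burkholder's inequality, (\ref{m23}) reduces to bounding $\sum_k\E|D_k|^2$, while (\ref{m23**}) also requires $\sum_k\E|D_k|^4$ together with the conditional-variance term $\E\big(\sum_k\E_{k-1}|D_k|^2\big)^2$. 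In each $D_k$ I would substitute $\beta_{1k}=\beta_{1k}^{\tr}-\beta_{1k}\beta_{1k}^{\tr}\eta_{1k}$; the piece $\beta_{1k}^{\tr}\,\E_{\bbs_k}\!\big[(\bbe_i^T\bbA_{1k}^{-1}\bbs_k)(\bbs_k^T\bbA_{1k}^{-1}\bbe_j)\big]=n^{-1}\beta_{1k}^{\tr}(\bbA_{1k}^{-2})_{ij}$ is free of $\bbs_k$ and is therefore killed by $\E_k-\E_{k-1}$, leaving a centered bilinear form in $\bbs_k$ multiplied by $\beta_{1k}^{\tr}$, plus an $\eta_{1k}$-term. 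These are estimated through the quadratic/bilinear-form moment inequalities of Lemmas \ref{lem7}, \ref{lem8}, the boundedness of the absolute moments of $\beta_{1k},\beta_{1k}^{\tr}$ from Lemma \ref{lem1}, and the bound $\E\big(|\eta_{1k}|^{2m}\,|\,\bbA_{1k}^{-1}\big)\le M n^{-m}\big(n^{-1}\tr\bbA_{1k}^{-1}\bbA_{1k}^{-1}(\bar z)\big)^{m}$ coming from \eqref{g1}.

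The estimate that produces the stated powers of $v$, rather than the two extra powers a crude bound would cost, is the identity $\|\bbA_{1k}^{-1}(z)\bbe_i\|^2=\bbe_i^T\bbA_{1k}^{-1}(\bar z)\bbA_{1k}^{-1}(z)\bbe_i=v^{-1}\Im\big((\bbA_{1k}^{-1}(z))_{ii}\big)$, which costs only a single factor $v^{-1}$ (together with $\|\bbA_{1k}^{-2}\bbe_i\|^2\le v^{-2}\|\bbA_{1k}^{-1}\bbe_i\|^2$ for the $\bbA_1^{-2}$ statements, i.e. one more factor $v^{-1}$, which accounts for the passage from $v^2$ to $v^4$ and from $v^4$ to $v^6$). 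To exploit it one needs $\E|(\bbA_{1k}^{-1})_{ii}|^{q}\le M$: the mean is bounded because $\bbA_{1k}=n^{-1}\widetilde\bbX\widetilde\bbX^T$ for a matrix $\widetilde\bbX$ with i.i.d.\ entries, so by row-permutation symmetry $\E(\bbA_{1k}^{-1})_{ii}=p^{-1}\E\tr\bbA_{1k}^{-1}$, which stays bounded for $z$ in the stated range; the fluctuation is exactly the quantity (\ref{m23**}) (resp. (\ref{m23***})) for the matrix $\bbA_{1k}$ carrying one fewer column, and since $nv^2$ is bounded below it cannot spoil the constant. This is where a downward bootstrap on the number of columns enters, the base case being a matrix of rank $O(1)$ for which the resolvent entries are explicit; alternatively one strengthens the induction hypothesis to the subgaussian-type bounds $\E|(\bbA_{1k}^{-1})_{ii}-\E(\bbA_{1k}^{-1})_{ii}|^{2q}\le M_q(nv^2)^{-q}$, which the hypothesis $\E X_{11}^{32}<\infty$ makes affordable as it supplies high moments of all the quadratic forms involved.

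The main obstacle is the fourth-moment bounds (\ref{m23**}) and (\ref{m23***}): after the preceding reductions $\sum_k\E|D_k|^4$ is routine, but the conditional-variance term $\E\big(\sum_k\E_{k-1}|D_k|^2\big)^2$ in Burkholder's inequality is not small enough if one merely estimates $\E_{k-1}|D_k|^2$ summand by summand with the fourth-moment a priori control of $(\bbA_{1k}^{-1})_{ii}$. I would close this either by applying the concentration machinery a second time to the resolvent functionals $|\beta_{1k}^{\tr}|^2\|\bbA_{1k}^{-1}\bbe_i\|^2\|\bbA_{1k}^{-1}\bbe_j\|^2=v^{-2}|\beta_{1k}^{\tr}|^2\Im(\bbA_{1k}^{-1})_{ii}\,\Im(\bbA_{1k}^{-1})_{jj}$ appearing in $\E_{k-1}|D_k|^2$ — using that they concentrate at the rate furnished by (\ref{m23**}) one bootstrap level down — or by carrying the subgaussian-type moment bounds above, which upgrade $\E\|\bbA_{1k}^{-1}\bbe_i\|^{2q}$ to $O(v^{-q})$ and make the Burkholder estimate go through directly. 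The $\bbA_1^{-2}$ inequalities (\ref{m23*}), (\ref{m23***}) follow along identical lines after writing $\bbA_1^{-2}-\bbA_{1k}^{-2}=(\bbA_1^{-1}-\bbA_{1k}^{-1})\bbA_1^{-1}+\bbA_{1k}^{-1}(\bbA_1^{-1}-\bbA_{1k}^{-1})$, each additional resolvent factor contributing one more power of $v^{-1}$, hence $v^{-2}$ in the final bound.
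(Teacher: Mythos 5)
Your skeleton coincides with the paper's: a martingale decomposition of the resolvent entry over the columns defining $\bbA_1$, Burkholder's inequality, the identity $\|\bbA_{1k}^{-1}(z)\bbe_i\|^2=v^{-1}\Im\big((\bbA_{1k}^{-1}(z))_{ii}\big)$ as the source of the saving, and moment control of the diagonal entries whose means are bounded by exchangeability. The genuine gap is in how you close the resulting circularity. The paper never descends in the number of columns: writing $\Phi_1(z)=\bbe_i^T\bbA_1^{-1}(z)\bbe_i-\E\bbe_i^T\bbA_1^{-1}(z)\bbe_i$, it pulls the diagonal entry of $\bbA_{1k}^{-1}$ \emph{back up} to that of $\bbA_1^{-1}$ through the rank-one comparison \eqref{h7}, $\E|\bbe_i^T\bbA_{1k}^{-1}(z)\bbe_i-\bbe_i^T\bbA_{1}^{-1}(z)\bbe_i|^4\leq M(nv^2)^{-1}$, so that both the increment bound \eqref{h10} and the conditional-variance bound \eqref{h18} are expressed in terms of the single unknown $x=\E|\Phi_1(z)|^4$; Burkholder then yields the self-bounding inequality $x\leq M(nv^2)^{-1}\sqrt{x}+M(nv^2)^{-1}$, which is solved directly using $nv^2\geq M_1^2$. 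Your proposed closures do not substitute for this. The descent ``to a matrix of rank $O(1)$'' requires the per-step constants to be uniform over all intermediate column counts, and they are not: the bounds you invoke from Lemma \ref{lem1} (bounded moments of $\beta_{1k}$, $\beta^{\tr}_{1k}$, boundedness of $n^{-1}\E\tr$ of the resolvent, of $u_n(z)$, etc.) are proved for aspect ratio near $c\in(0,1)$ and $z=u+iv$ with $u\in[a,b]$ in the corresponding bulk. Once a macroscopic fraction of columns is removed the effective ratio changes; for instance with only $o(n)$ columns retained one has $1+\bbs_k^T\bbB^{-1}\bbs_k\approx (z-c_n)/z$, and since $c_n\in(a,b)$ whenever $c>1/4$, at $u\approx c_n$ this denominator is of order $v$, so $\E|\beta|^8$ is of order $v^{-8}$ rather than $O(1)$ and the recursion constant explodes along the descent. ``Applying the concentration machinery one bootstrap level down'' is the same infinite descent in disguise. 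Moreover, the claim that $\E X_{11}^{32}<\infty$ ``supplies high moments of all the quadratic forms involved'' for arbitrary order $2q$ overstates Lemma \ref{lem8}, which requires $\E|X_{11}|^{2p}$; after truncation at $\tau_n n^{1/2}$ these grow with $n$ beyond the 32nd moment. (Only 4th/8th moments are actually needed, but then your strengthened ``subgaussian-type'' hypothesis is neither available nor necessary.) Concretely, the missing ingredients are \eqref{h7} and the solve-the-inequality step.

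A smaller point: for \eqref{m23*} and \eqref{m23***} the paper does not rerun the martingale argument with $\bbA_1^{-2}-\bbA_{1k}^{-2}$; it applies Cauchy's integral formula on the circle $|\zeta-z|=v/2$ (see \eqref{g19}), which converts \eqref{m23} and \eqref{m23**} into the $\bbA_1^{-2}$ bounds at a cost of exactly $v^{-2}$ and $v^{-4}$ respectively. Your expansion route could be made to work but is more laborious, and the exponent bookkeeping there should read ``from $v^{2}$ to $v^{6}$'' for the fourth moment, not ``from $v^{4}$ to $v^{6}$''.
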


\begin{proof}
 Consider $i=j$ first. Let $\Phi_1(z)=\bbe_i^T\bbA_1^{-1}(z)\bbe_i-E\bbe_i^T\bbA_1^{-1}(z)\bbe_i$. By (\ref{f6}) and (\ref{f13}) write
\begin{eqnarray}
\Phi_1(z)&=&\sum\limits_{k=2}^p(\E_k-\E_{k-1})(\bbe_i^T(\bbA_1^{-1}(z)-\bbA_{1k}^{-1}(z))\bbe_i) \nonumber \\
&=&-\sum\limits_{k=2}^p(\E_k-\E_{k-1})(\bbs_k^T\bbA_{1k}^{-1}(z)\bbe_i\bbe_i^T\bbA_{1k}^{-1}(z)\bbs_k\beta_{1k}). \label{h2}
\end{eqnarray}
Let
$$
\gamma_{ke}=\gamma_{rke}-n^{-1}\bbe_i^T\bbA_{1k}^{-2}(z)\bbe_i,\ \gamma_{rke}=\bbs_k^T\bbA_{1k}^{-1}(z)\bbe_i\bbe_i^T\bbA_{1k}^{-1}(z)\bbs_k.
$$
From  Lemmas \ref{lem8} and \ref{lem1} we obtain
\begin{eqnarray}
\E|\gamma_{rke}\beta_{1k}|^4&\leq&
M(\E|\beta_{1k}|^8)^{1/2}(\E|\gamma_{ke}|^{8})^{1/2}+M(\E|\beta_{1k}|^8)^{1/2}(\E|n^{-1}\bbe_i^T\bbA_{1k}^{-2}(z)\bbe_i|^8)^{1/2}\non
&\leq &M(n^4v^6)^{-1}(1+ (\E|\Phi_1(z)|^4)^{1/2}),\label{h10}
\end{eqnarray}
where we also use the facts that
$$
\bbe_i^T\bbA_{1k}^{-1}(z)\bbA_{1k}^{-1}(\bar z)\bbe_i=v^{-1}\Im (\bbe_i^T\bbA_{1k}^{-1}(z)\bbe_i)
$$
and that via (\ref{f6}), Lemmas \ref{lem8} and \ref{lem1}
\begin{equation}\label{h7}
\E|\bbe_i^T\bbA_{1k}^{-1}(z)\bbe_i-\bbe_i^T\bbA_{1}^{-1}(z)\bbe_i|^4\leq
M(nv^2)^{-1}.
\end{equation}

By Lemma \ref{lem8}, estimates similar to (\ref{f44}) and (\ref{g1})
we have
$$\E_{k-1}\Big|\gamma_{rke}\Big|^2\leq\frac{M}{n^2v^2}\E_{k-1}|\bbe_i^T\bbA_{1k}^{-1}(z)\bbe_i|^2,$$
$$
\E_{k-1}\Big|\gamma_{rke}(\beta^{tr}_{1k})^2\eta_{1k}\Big|^2
\leq\frac{M}{n^3v^3}\Big(\E_{k-1}|\bbe_i^T\bbA_{1k}^{-1}(z)\bbe_i|^4\Big)^{1/2}\Big(\E_{k-1}|\beta^{tr}_{1k}|^4\Big)^{1/2}$$
and
$$
\E_{k-1}\Big|\gamma_{rke}\beta_{1k}(\beta^{tr}_{1k})^2\eta_{1k}^2\Big|^2
$$
$$\leq\frac{M}{v^2}\E_{k-1}\Big|\gamma_{rke}\beta^{tr}_{1k}\eta_{1k}^2\Big|^2
\leq\frac{M}{n^4v^6}\Big(\E_{k-1}|\bbe_i^T\bbA_{1k}^{-1}(z)\bbe_i|^4\Big)^{1/2}\Big(\E_{k-1}|\beta^{tr}_{1k}|^2\Big)^{1/2}.
$$
These, together with
$\beta_{1k}=\beta^{tr}_{1k}-(\beta^{tr}_{1k})^2\eta_{1k}+\beta_{1k}(\beta^{tr}_{1k})^2\eta_{1k}^2$,
imply that
\begin{equation}\label{h18}
\E_{k-1}\Big|\gamma_{rke}\beta_{1k}\Big|^2
\leq
\frac{M}{n^2v^2}\Big(\E_{k-1}|\bbe_i^T\bbA_{1k}^{-1}(z)\bbe_i|^4\Big)^{1/2}\Big[\Big(\E_{k-1}|\beta^{tr}_{1k}|^4\Big)^{1/2}+1\Big].
\end{equation}
It follows from (\ref{h10}), (\ref{h18}), (\ref{h7}), Burkholder's inequality and Lemma \ref{lem1} that
$$
E|\Phi_1(z)|^4\leq ME\Big(\sum\limits_{k=2}^p\E_{k-1}\Big|\gamma_{rke}\beta_{1k}\Big|^2 \Big)^2+M
\sum\limits_{k=2}^pE|\gamma_{rke}\beta_{1k}|^4
$$
$$
\leq \frac{M}{nv^2}(E|\Phi_1(z)|^4)^{1/2}+\frac{M}{nv^2}.
$$
Solving the inequality yields (\ref{m23**}).

As for $i\neq j$, from (\ref{m23**}), (\ref{h7}) and Burkholder's inequality we obtain
$$
\E|\bbs_k^T\bbA_{1k}^{-1}(z)\bbe_i|^4\leq \frac{M}{n^2}E|\bbe_i^T\bbA_{1k}^{-1}(z)\bbA_{1k}^{-1}(\bar z)\bbe_i|^2\leq \frac{M}{n^2v^2}E|\bbe_i^T\bbA_{1}^{-1}(z)\bbe_i|^2\leq\frac{M}{n^2v^2}
$$
and
$$
\E|\bbs_k^T\bbA_{1k}^{-1}(z)\bbe_i|^8\leq\frac{M}{n^4v^6}.
$$
These ensure that
$$
\E|\bbs_k^T\bbA_{1k}^{-1}(z)\bbe_i\bbe_j^T\bbA_{1k}^{-1}(z)\bbs_k|^2\leq M (\E|\bbs_k^T\bbA_{1k}^{-1}(z)\bbe_i|^4\E|\bbe_j^T\bbA_{1k}^{-1}(z)\bbs_k|^4)^{1/2}\leq M(n^2v^2)^{-1}
$$
and
$$
\E|\xi_{1k}\beta_{1k}\bbs_k^T\bbA_{1k}^{-1}(z)\bbe_i\bbe_j^T\bbA_{1k}^{-1}(z)\bbs_k)|^2
$$$$\leq M(\E|\bbs_k^T\bbA_{1k}^{-1}(z)\bbe_i|^8\E|\bbe_j^T\bbA_{1k}^{-1}(z)\bbs_k|^8)^{1/4}(\E|\xi_{1k}|^8\E|\beta_{1k}|^8)^{1/4}
\leq M(n^3v^4)^{-1}.
$$
By the above two estimates, (\ref{f13}) and replacing $\bbe_i$ of (\ref{h2}) by $\bbe_j$ we obtain (\ref{m23}).

In view of Cauchy's theorem we have
\begin{equation}\label{g19}
\E|\bbe_i^T\bbA_1^{-2}(z)\bbe_j-\E\bbe_i^T\bbA_1^{-2}(z)\bbe_j|^2\leq
M v^{-2}\sup\limits_{\zeta\in \Gamma_\zeta}\E|\Phi_1(\zeta)|^2,
\end{equation}
where  $\Gamma_\zeta=\{\zeta: |\zeta-z|=v/2\}$. This, together with (\ref{m23}), ensures (\ref{m23*}). Similarly from (\ref{m23**}) we can obtain (\ref{m23***}).

\end{proof}

\begin{lemma}\label{lem12}
\begin{equation}\label{m32}
\big|\E(\eta_1)^3\big|\leq M/(n^{\frac{3}{2}}v),
\end{equation}
\begin{equation}\label{m33}
\big|\E\big[\E_k\big(\eta_k^{(2)}\big)\E_k\big(\eta_k^{(2)}\eta_k\big)\big]\big|\leq M/(nv^2),
\end{equation}
\end{lemma}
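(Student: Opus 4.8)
The plan is to regard both bounds as third-order moment estimates for centered quadratic forms in the vectors $\bbs_k$, and in each case to reduce to the triple-product analogue of the identity \eqref{i1}; the surviving traces and diagonal sums are then controlled by the resolvent estimates already at hand, the hypothesis $v\geq M_1/\sqrt n$ being used to absorb the extra negative powers of $v$.

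For \eqref{m32} I would condition on $\bbA_1^{-1}(z)$, which is independent of $\bbs_1$, and write $\eta_1=n^{-1}\bigl(\bby^{T}\bbA_1^{-1}(z)\bby-\tr\bbA_1^{-1}(z)\bigr)$ with $\bby=n^{1/2}\bbs_1$ having i.i.d.\ truncated coordinates. Expanding $\E[\eta_1^{3}\mid\bbA_1^{-1}(z)]$ produces a leading term proportional to $n^{-3}\tr\bbA_1^{-3}(z)$, together with lower-order terms weighted by $(\E X_{11}^{3})^{2}$, by $\E X_{11}^{4}-3=O(1/n)$, by $\E X_{11}^{6}=O(1)$, and by the index-coincidence corrections to the leading term. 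Each of these lower-order terms is a sum of products of at most three entries of $\bbA_1^{-1}(z)$ and $\bbA_1^{-2}(z)$, hence is bounded in modulus by a constant multiple of $n^{-3}v^{-1}\tr\bbA_1^{-1}(z)\bbA_1^{-1}(\bar z)$ (using $\|\bbA_1^{-1}(z)\|\leq v^{-1}$ and the Hilbert--Schmidt identity); by Lemma \ref{lem1} its expectation is $O(n^{-2}v^{-2})$, and since $v\geq M_1/\sqrt n$ this is $O\bigl(n^{-3/2}v^{-1}\bigr)$. The leading term must not be bounded pointwise (that would only give $n^{-2}v^{-3}$, which fails for $v\sim n^{-1/2}$); instead $\E n^{-3}\tr\bbA_1^{-3}(z)=\tfrac12 n^{-2}\tfrac{d}{dz}\E n^{-1}\tr\bbA_1^{-2}(z)$, and Cauchy's estimate on a disc of radius $v/2$ combined with $|\E n^{-1}\tr\bbA_1^{-2}(z)|\leq M/\sqrt v$ from Lemma \ref{lem14} gives $|\E n^{-1}\tr\bbA_1^{-3}(z)|\leq Mv^{-3/2}$, hence a contribution $O(n^{-2}v^{-3/2})\leq Mn^{-3/2}v^{-1}$ (already the boundedness of $\E n^{-1}\tr\bbA_1^{-1}(z)$ alone, through two derivatives, yields the weaker but still sufficient $O(n^{-2}v^{-2})$).

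For \eqref{m33} the key point is that, because $\bbs_k$ is independent of $\bbA_k$, one has $\E_k(\eta_k^{(2)})=\bbs_k^{T}\bigl(\E_{k-1}\bbA_k^{-2}(z)\bigr)\bbs_k-n^{-1}\tr\bigl(\E_{k-1}\bbA_k^{-2}(z)\bigr)$, which is itself a centered quadratic form in $\bbs_k$ with a $\sigma(\bbs_1,\dots,\bbs_{k-1})$-measurable matrix. By the tower property $\E\bigl[\E_k(\eta_k^{(2)})\,\E_k(\eta_k^{(2)}\eta_k)\bigr]=\E\bigl[\E_k(\eta_k^{(2)})\,\eta_k^{(2)}\eta_k\bigr]$, and conditioning on $\{\bbs_j\}_{j\neq k}$ now freezes all three matrices $\E_{k-1}\bbA_k^{-2}(z)$, $\bbA_k^{-2}(z)$, $\bbA_k^{-1}(z)$, so the inner conditional expectation becomes a third mixed moment of three centered quadratic forms in $\bbs_k$. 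Invoking the triple-product analogue of \eqref{i1}, the surviving terms are traces such as $n^{-3}\tr\bigl((\E_{k-1}\bbA_k^{-2}(z))\bbA_k^{-3}(z)\bigr)$ and its reorderings, plus diagonal sums and cumulant corrections. Using H\"older's inequality, Jensen's inequality to pass from $\|\E_{k-1}\bbA_k^{-2}(z)\|_{\HS}^{2}$ to $\E_{k-1}\tr\bbA_k^{-2}(z)\bbA_k^{-2}(\bar z)\leq v^{-2}\E_{k-1}\tr\bbA_k^{-1}(z)\bbA_k^{-1}(\bar z)$, and $\E\tr\bbA_k^{-1}(z)\bbA_k^{-1}(\bar z)\leq Mn/v$ from Lemma \ref{lem1}, every such term is $O(nv^{-4})$ in expectation; with the prefactor $n^{-3}$ this is $O(n^{-2}v^{-4})\leq Mn^{-1}v^{-2}$, since $nv^{2}\geq M_1^{2}$.

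The step I expect to be the main obstacle, in both parts, is the leading trace term: it is of a size that no operator-norm bound can control at the scale $v\sim n^{-1/2}$, so one is forced to exploit analyticity --- Cauchy's estimates applied to \emph{expected} traces --- and, for \eqref{m32}, the sharp order $v^{-1/2}$ of $\E n^{-1}\tr\bbA_1^{-2}(z)$ supplied by Lemma \ref{lem14}. A secondary but unavoidable chore is the bookkeeping of the many terms produced by the third-moment identity, in particular checking that the contributions carrying $(\E X_{11}^{3})^{2}$ and $\E X_{11}^{6}$ --- quantities that are $O(1)$ rather than small --- are nonetheless absorbed by the $n^{-3}$ factor and the $\tr\bbA^{-1}(z)\bbA^{-1}(\bar z)$-type estimates.
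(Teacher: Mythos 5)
Your proposal is correct and follows essentially the same route as the paper: both parts are reduced to the explicit third-moment identity for centered quadratic forms in $\bbs_k$ with frozen matrices independent of $\bbs_k$ (the paper's \eqref{m24}--\eqref{m27}), after which the surviving diagonal sums and traces are bounded and the excess powers of $v$ are absorbed through $v\geq M_1/\sqrt{n}$. The differences are only in execution: the paper bounds the cubic trace directly via $|\tr\bbA_1^{-3}(z)|\leq v^{-1}\tr\bbA_1^{-1}(z)\bbA_1^{-1}(\bar z)$ and $\E\tr\bbA_1^{-1}(z)\bbA_1^{-1}(\bar z)\leq Mn/v$ (so your Cauchy-estimate detour through Lemma \ref{lem14} is valid but not needed), it uses the independent copy $\underline{\bbA}_k^{-2}(z)$ where you use $\E_{k-1}\bbA_k^{-2}(z)$ in \eqref{m33} (an equivalent reduction), and it sharpens several terms with the entrywise concentration of Lemma \ref{lem15}, which your cruder operator-norm/Hilbert--Schmidt bounds make dispensable at the stated rates.
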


\begin{proof}
 For $m=1,2,3$, write
$$
\bbs_k^T\bbB_m\bbs_k-n^{-1}\tr\bbB_1=\sum\limits_{i=1}^p(X_{ki}^2-1)(\bbB_m)_{ii}+\sum\limits_{i\neq
j}X_{ki}X_{kj}(\bbB_m)_{ij},
$$
where $\bbB_1,\bbB_2,\bbB_3$ are symmetric, independent of $\bbs_k$.
A direct calculation then yields
\begin{equation}\label{m24}
\E\Big[\prod\limits_{m=1}^3(\bbs_k^T\bbB_m\bbs_k-n^{-1}\tr\bbB_m)\Big]
=n^{-3}\E(X_{11}^2-1)^3\sum\limits_{i=1}^p\E\big[\prod\limits_{m=1}^3(\bbB_m)_{ii}\big]
\end{equation}
\begin{equation}\label{m25}
+2n^{-3}(\E X_{11}^3)^2\sum\limits_{m_1,m_2,m_3}\sum\limits_{i_1\neq
i_2}\E\big[(\bbB_{m_1})_{i_1i_1}(\bbB_{m_2})_{i_2i_2}(\bbB_{m_3})_{i_1i_2}\big]
\end{equation}
\begin{equation}\label{m26*}
+4n^{-3}(\E X_{11}^4-1)\sum\limits_{m_1,m_2,m_3}\sum\limits_{i_1\neq
i_2}\E\big[(\bbB_{m_1})_{i_1i_1}(\bbB_{m_2})_{i_1i_2}(\bbB_{m_3})_{i_1i_2}\big]
\end{equation}
\begin{equation}\label{m26}
+4n^{-3}(\E X_{11}^3)^2\sum\limits_{i_1\neq
i_2}\E\big[(\bbB_1)_{i_1i_2}(\bbB_2)_{i_1i_2}(\bbB_3)_{i_1i_2}\big]
\end{equation}
\begin{equation}\label{m27}
+8n^{-3}\sum\limits_{i_1\neq i_2,i_2\neq i_3,i_1\neq i_3}
\E\big[ (\bbB_1)_{i_1i_2}(\bbB_2)_{i_1i_3}(\bbB_3)_{i_2i_3}\big].
\end{equation}
where each $m_i$ runs over $1,2,3$, $m_i\neq m_j$ for any $i\neq j$.

Consider $\E(\eta_1)^3$ now. In this case
$\bbB_1=\bbB_2=\bbB_3=\bbA_1^{-1}(z)$ in (\ref{m24}). Note that
$\E(\bbA_1^{-1}(z))_{ii}=n^{-1}\E\tr\bbA_1^{-1}(z)$ is bounded. By
Lemma \ref{lem15}
\begin{equation}\label{m28}
n^{-3}\E(X_{11}^2-1)^3\sum\limits_{i=1}^p\E\big[\prod\limits_{m=1}^3(\bbB_m)_{ii}\big]=O(n^{-2}).
\end{equation}
From Lemma \ref{lem15} and H\"older's inequality we also have
$$
n^{-3}\big|\sum\limits_{i_1\neq
i_2}\E\big[(\bbB_{m_1})_{i_1i_1}(\bbB_{m_2})_{i_2i_2}(\bbB_{m_3})_{i_1i_2}\big]\big|\leq
Mn^{-3/2}(n^{-1}\E\tr\bbA_1^{-1}(z)\bbA_1^{-1}(\bar
z))^{1/2},
$$
which implies
\begin{equation}\label{m29}
(\ref{m25})=O(n^{-3/2}v^{-1}).
\end{equation}
Similarly, one may verify that
\begin{equation}\label{m30}
(\ref{m26*})=O(n^{-2}v^{-1}),\quad
(\ref{m26})=O( n^{-2}v^{-2}).
\end{equation}
This, together with Lemma \ref{lem15}, implies that
$$
(\ref{m27})=8 n^{-3}\sum\limits_{i_1=1}^p\sum\limits_{i_2=1}^p\sum\limits_{i_3=1}^p
 \E\big[
(\bbA_1^{-1}(z))_{i_1i_2}(\bbA_1^{-1}(z))_{i_1i_3}(\bbA_1^{-1}(z))_{i_2i_3}\big]+O(n^{-2}v^{-2})
$$
\begin{equation}
=8n^{-3}\sum\limits_{i=1}^p\E(\bbe_i^T\bbA_1^{-3}(z)\bbe_i)+O(n^{-2}v^{-2})=O(n^{-2}v^{-2}).\label{m31}
\end{equation}
Thus (\ref{m32}) follows from (\ref{m28})-(\ref{m31}).

Consider (\ref{m33}) next. Write
$$\E\big[\E_k (\eta_k^{(2)} )\E_k (\eta_k^{(2)}\eta_k)\big]=\E\big[\underline{\eta}_k^{(2)}\eta_k^{(2)}\eta_k\big],$$
where
$\underline{\eta}_k^{(2)}=\bbs_k^T\underline{\bbA}^{-2}_k(z)\bbs_k-n^{-1}\tr\underline{\bbA}^{-2}_k(z)$. In this case $\bbB_1=\underline{\bbA}^{-2}_k(z)$,
$\bbB_2=\bbA^{-2}_k(z)$ and $\bbB_3=\bbA_k^{-1}(z)$ in (\ref{m24}).
Applying Lemmas \ref{lem14} and \ref{lem15} we have
\begin{equation}\label{m34}
n^{-3} \E(X_{ki}^2-1)^3 \sum\limits_{i=1}^p\E\Big[\prod\limits_{m=1}^3(\bbB_m)_{ii}\Big]=O(n^{-2}v^{-2}).
\end{equation}
Similarly one may verify that
\begin{equation}
(\ref{m25})=O(n^{-3/2}v^{-3}),\quad
(\ref{m26*})=O(n^{-2}v^{-3}),\quad
(\ref{m26})=O(n^{-2}v^{-4}),
\end{equation}
where we use the fact that
$\E n^{-1}\tr\bbA^{-2}_k(z)\bbA^{-2}_k(\bar z)\leq1/v^3$. As in
(\ref{m31}) we obtain
$$
(\ref{m27})=8n^{-3}\sum\limits_{i=1}^p\E(\bbe_i^T\underline{\bbA}_1^{-2}(z)\bbA_1^{-3}(z)\bbe_i)+O(n^{-2}v^{-4})
=O(n^{-2}v^{-4}).
$$
These imply (\ref{m33}).
\end{proof}

\begin{lemma}\label{lem11}
$$
n^{-2}\E\big(\tr\bbA^{-1}(z)-\E\tr\bbA^{-1}(z)\big)^2
=n^{-4}b_1^2(z)\sum\limits_{k=1}^n\E\big[\tr\bbA_k^{-2}(z)\underline{\bbA}_k^{-2}(z)\big]+O(n^{-2}v^{-2}).
$$
\end{lemma}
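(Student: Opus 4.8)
The plan is to start from the martingale decomposition (\ref{g39}): write $\tr\bbA^{-1}(z)-\E\tr\bbA^{-1}(z)=\sum_{k=1}^n\gamma_k$ with $\gamma_k:=-(\E_k-\E_{k-1})\big[\beta_k(z)\bbs_k^T\bbA_k^{-2}(z)\bbs_k\big]$. Each $\gamma_k$ is a martingale difference, and for $k<l$ we have $\E_{l-1}[\gamma_k\gamma_l]=\gamma_k\,\E_{l-1}\gamma_l=0$, so $\E(\tr\bbA^{-1}(z)-\E\tr\bbA^{-1}(z))^2=\sum_{k=1}^n\E\gamma_k^2$. Hence it suffices to isolate the leading part of each $\gamma_k$ and to show that the rest contributes $O(v^{-2})$ to the sum, since $n^{-2}\,O(v^{-2})=O(n^{-2}v^{-2})$.

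For the leading part I would use (\ref{b22}) in the form $\beta_k=b_1-b_1^2\xi_k+b_1^2\beta_k\xi_k^2$, together with $\bbs_k^T\bbA_k^{-2}\bbs_k=\eta_k^{(2)}+n^{-1}\tr\bbA_k^{-2}$ and $\xi_k=\eta_k+\Gamma_k$. Since $b_1(z)$ is deterministic while $n^{-1}\tr\bbA_k^{-2}$ and $\Gamma_k\,n^{-1}\tr\bbA_k^{-2}$ are functions of $\bbA_k$ alone (so $\E_k$ and $\E_{k-1}$ act identically on them) and $\E_{k-1}\eta_k^{(2)}=0$, one gets
$$
\gamma_k=-b_1\,\E_k(\eta_k^{(2)})+R_k,\qquad
R_k=b_1^2(\E_k-\E_{k-1})\big[\eta_k\eta_k^{(2)}+\big(n^{-1}\tr\bbA_k^{-2}\big)\eta_k+\Gamma_k\eta_k^{(2)}-\beta_k\xi_k^2\bbs_k^T\bbA_k^{-2}\bbs_k\big].
$$
Using Lemmas \ref{lem1}, \ref{lem7}, \ref{lem8}, the bounds (\ref{f44}), (\ref{m17}), (\ref{g1}) for the $\beta$'s and $\eta$'s, and — the decisive inputs — the order $|\E n^{-1}\tr\bbA_1^{-2}(z)|\le M/\sqrt v$ and the $|z+c_n-1+2zc_nm_n^0(z)|$-weighted estimates (\ref{h48}), (\ref{h48**}) of Lemma \ref{lem14}, one checks $\sum_k\E R_k^2=O(v^{-2})$. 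The cross term $\sum_k\E[(\E_k\eta_k^{(2)})R_k]$ must be \emph{evaluated}, not bounded by Cauchy--Schwarz (which overshoots when $v\sim n^{-1/2}$): since $\E_{k-1}(\E_k\eta_k^{(2)})=0$, each contribution equals $\E[(\E_k\eta_k^{(2)})\,\E_k(\text{piece})]$, and after one use of (\ref{i1}) it becomes a sum over $k$ of a trace controlled by the weighted bounds of Lemma \ref{lem14}; in particular the $\eta_k\eta_k^{(2)}$ piece gives exactly $b_1^3\,\E[\E_k(\eta_k^{(2)})\E_k(\eta_k^{(2)}\eta_k)]$, which by (\ref{m33}) of Lemma \ref{lem12} is $O(1/(nv^2))$, summing to $O(v^{-2})$.

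It remains to evaluate $\sum_k\E(\E_k\eta_k^{(2)})^2$. Here $\E_k\eta_k^{(2)}(z)=\bbs_k^T\bbM_k\bbs_k-n^{-1}\tr\bbM_k$ with $\bbM_k:=\E_{k-1}\bbA_k^{-2}(z)$, a complex symmetric matrix independent of $\bbs_k$, so (\ref{i1}) applied conditionally on $\bbs_1,\dots,\bbs_{k-1}$ gives
$$
\E(\E_k\eta_k^{(2)})^2=n^{-2}(\E X_{11}^4-3)\,\E\sum_{i=1}^p(\bbM_k)_{ii}^2+2n^{-2}\,\E\,\tr\bbM_k^2 .
$$
By (\ref{f49}), $\E X_{11}^4-3=O(1/n)$, so the first term is $O(n^{-3})\,\E\sum_i|(\bbM_k)_{ii}|^2$, which after summation and the bound $|(\bbM_k)_{ii}|\le v^{-2}$ (or the sharper Lemma \ref{lem15}) is $O(n^{-1}v^{-4})=O(v^{-2})$ because $v\ge M_1/\sqrt n$. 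For the second term, $\bbA_k^{-2}(z)$ and $\underline{\bbA}_k^{-2}(z)$ are conditionally independent given $\bbs_1,\dots,\bbs_{k-1}$ with the same conditional law, whence $\E\,\tr\bbM_k^2=\E\,\tr(\E_{k-1}\bbA_k^{-2})(\E_{k-1}\underline{\bbA}_k^{-2})=\E\,\tr\bbA_k^{-2}(z)\underline{\bbA}_k^{-2}(z)$. Collecting the three steps yields the claimed expansion.

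The main obstacle is the error control in the second step. At the critical size $v\sim M_1/\sqrt n$ the crude $L^2$ bounds used elsewhere put several of the remainder pieces, as well as the naive Cauchy--Schwarz estimate of the cross term, exactly at or above the target order $v^{-2}$, with no slack; what rescues the argument is precisely the extra factor $|z+c_n-1+2zc_nm_n^0(z)|\gtrsim\sqrt v$ carried by (\ref{h48}) and (\ref{h48**}) (cf.\ the remark after Lemma \ref{lem14}), the $v^{-1/2}$ order of $\E n^{-1}\tr\bbA_1^{-2}(z)$, and the cross-moment bound (\ref{m33}) of Lemma \ref{lem12}, combined with evaluating — rather than merely bounding — each cross term.
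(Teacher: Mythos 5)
Your proposal follows essentially the same route as the paper's proof: the martingale decomposition (\ref{g39}), expansion of $\beta_k$ via (\ref{b22}) (the paper uses the equivalent (\ref{m11}) plus (\ref{h35})) to isolate the leading piece $b_1\E_k(\eta_k^{(2)})$, direct evaluation -- rather than Cauchy--Schwarz -- of the cross and remainder terms using (\ref{i1}), the bound (\ref{m33}) of Lemma \ref{lem12}, and the estimates of Lemmas \ref{lem14}, \ref{lem7}, \ref{lem1}, and finally identification of the main term through $\tr\big(\E_{k-1}\bbA_k^{-2}(z)\big)^2=\E_{k-1}\tr\bbA_k^{-2}(z)\underline{\bbA}_k^{-2}(z)$. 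The only blemish is that your displayed identity carries the factor $2n^{-2}\E\tr\bbM_k^2$, so what you actually derive is twice the stated leading coefficient, yet you announce ``the claimed expansion''; note, however, that the paper's own proof applies the same identity (\ref{i1}) and still records coefficient $n^{-4}b_1^2$, so this factor-of-two discrepancy is inherited from the statement itself and is harmless, since the lemma is only invoked as an upper bound (see (\ref{g11})).
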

\begin{proof}
Write \begin{equation}
\label{m11}
\beta_k=\beta_k^{\tr}-\beta_k\beta_k^{\tr}\eta_k(z).
\end{equation}
Let
$$
u_k=\big(\E_k-\E_{k-1}\big)(\beta_k\bbs_k^T\bbA_k^{-2}(z)\bbs_k).
$$
Applying (\ref{m11}) twice and referring to (\ref{g39}) we then
have
$$\E\big[\tr\bbA^{-1}(z)-\E\tr\bbA^{-1}(z)\big]^2=\sum\limits_{k=1}^n\E(u_k)^2
=\sum\limits_{k=1}^n\E\big(q_1+q_1+q_3+q_4+q_5+q_6\big),
$$
where
$$
q_1=\big[\E_k\big(\beta_k^{\tr}\eta_k^{(2)}\big)\big]^2,\
q_2=\big[\big(\E_k-\E_{k-1}\big)\big((\beta_k^{\tr})^2\bbs_k^T\bbA_k^{-2}(z)\bbs_k\eta_k\big)
\big]^2,
$$
$$
q_3=\big[\big(\E_k-\E_{k-1}\Big)\big(\beta_k(\beta_k^{\tr})^2\bbs_k^T\bbA_k^{-2}(z)\bbs_k\eta_k^2\big)
\big]^2
$$
$$
q_4=-2\E_k\big(\beta_k^{\tr}\eta_k^{(2)}\big)\big(\E_k-\E_{k-1}\big)\big((\beta_k^{\tr})^2\bbs_k^T\bbA_k^{-2}(z)\bbs_k\eta_k\big)
,
$$
$$
q_5=-2\big(\E_k-\E_{k-1}\big)\big((\beta_k^{\tr})^2\bbs_k^T\bbA_k^{-2}(z)\bbs_k\eta_k\big)
\big(\E_k-\E_{k-1}\big)\big(\beta_k(\beta_k^{\tr})^2\bbs_k^T\bbA_k^{-2}(z)\bbs_k\eta_k^2\big),
$$
and
$$
q_6=2\E_k\big(\beta_k^{\tr}\eta_k^{(2)}\big)\big(\E_k-\E_{k-1}\big)\big(\beta_k(\beta_k^{\tr})^2
\bbs_k^T\bbA_k^{-2}(z)\bbs_k\eta_k^2\big).
$$
It follows from  (\ref{m17}) and (\ref{h1}) that
$$
\big|n^{-2}\sum\limits_{k=1}^n\E q_3\big|\leq
Mn^{-2}v^{-2}\sum\limits_{k=1}^n
\E|(\beta_k^{\tr})^2\eta_k^2(z)|^2\leq M/(n^2v^2).
$$
Similar to (\ref{h1}), one may verify that
\begin{equation}
\E|\beta_k^{\tr}\eta_k^{(2)}|^8\leq M/(n^4v^{12}).\label{h32}
\end{equation}
We then conclude from (\ref{g1}), (\ref{h1}) (\ref{h32}), Lemmas
\ref{lem14} and \ref{lem1} and H\"older's inequality that
$$
\E|(\beta_k^{\tr})^2\bbs_k^T\bbA_k^{-2}(z)\bbs_k\eta_k|^4\leq M\E|(\beta_k^{\tr})^2\eta_k^{(2)}\eta_k|^4
$$\begin{equation}\label{h34}+M|\E n^{-1}\tr\bbA_k^{-2}(z)|^4E|(\beta_k^{\tr})^2\eta_k|^4
+M\E|\beta_k^{\tr}\Gamma_k^{(2)}\eta_k|^4\leq M/(n^2v^{4}).
\end{equation}
Similarly we have
\begin{equation}
\label{h33}
\E|\beta_k(\beta_k^{\tr})^2\bbs_k^T\bbA_k^{-2}(z)\bbs_k\eta_k^2|^2\leq M/(n^2v^3).
\end{equation}
In view of (\ref{h34}) and (\ref{h33})
$$
\E q_5=O(n^{-1}v^{-2}),\quad \E q_2=O(n^{-1}v^{-2}).
$$
As in (\ref{h34}) by (\ref{f44}), (\ref{g1}), Lemma \ref{lem1} we have
$$
\E|(\beta_k^{\tr})^3\bbs_k^T\bbA_k^{-2}(z)\bbs_k\eta_k^2|^2\leq M/(n^2v^3).
$$
By (\ref{m17}) and (\ref{g1})  we obtain
\begin{equation}\label{h39}
\E|\beta_k(\beta_k^{\tr})^3\bbs_k^T\bbA_k^{-2}(z)\bbs_k\eta_k^3|^2\leq M/(n^3v^5).
\end{equation}
These two estimates, together with (\ref{m11}) and (\ref{h32}), ensure that
\begin{equation}\label{h40}
\E q_6=O(n^{-1}v^{-2}).
\end{equation}

Write
\begin{equation}\label{h35}
\beta_k^{\tr}=b_1-b_1^2\Gamma_k+\beta_k^{\tr}b_1^2(\Gamma_k)^2.
\end{equation}
In view of (\ref{i1}), (\ref{h35}), Lemmas \ref{lem7}, \ref{lem14} and \ref{lem1} we may write
$$
n^{-2}\sum\limits_{k=1}^n\E q_1
=n^{-4}b_1^2(z)\sum\limits_{k=1}^n\E\big[\tr\bbA_k^{-2}(z)\underline{\bbA}_k^{-2}(z)\big]
$$$$+2n^{-3}b_1^3(z) \sum\limits_{k=1}^n\E\big[\big(n^{-1}\tr\bbA_k^{-2}(z)\underline{\bbA}_k^{-2}(z)
-n^{-1}\E(\tr\bbA_k^{-2}(z)\underline{\bbA}_k^{-2}(z))\big)
\Gamma_k\big]+O(n^{-2}v^{-2})
$$$$=n^{-4}b_1^2(z)\sum\limits_{k=1}^n\E\big[\tr\bbA_k^{-2}(z)\underline{\bbA}_k^{-2}(z)\big]+O(n^{-2}v^{-2}).
$$
Likewise, by (\ref{i1}), (\ref{h35}), (\ref{m33}) and Lemmas
\ref{lem14}, \ref{lem7} and \ref{lem1} we have
$$
n^{-2}\sum\limits_{k=1}^n\E q_4=-2n^{-2}b_1^3\sum\limits_{k=1}^n\E\big[\E_k\big(\eta_k^{(2)}\big)\big(\E_k-\E_{k-1}\big)
\big(\bbs_k^T\bbA_k^{-2}(z)\bbs_k\eta_k\big)\big]+O(n^{-2}v^{-2})
$$
$$
=-2n^{-2}b_1^3\sum\limits_{k=1}^n\E\big[\E_k\big(\eta_k^{(2)}\big)\E_k\big(\eta_k^{(2)}\eta_k\big)\big]
+2n^{-3}b_1^3\sum\limits_{k=1}^n\E\big[n^{-1}\tr\bbA_k^{-2}(z)
n^{-1}\tr\bbA_k^{-1}(z)\underline{\bbA}_k^{-2}(z)\big]
$$$$+O(n^{-2}v^{-2})=O(n^{-2}v^{-2}).
$$
\end{proof}

\begin{lemma}\label{lem13}
$$
n^{-3}\big|\E\big(\tr\bbA^{-1}(z)-\E\tr\bbA^{-1}(z)\big)^3\big|\leq M/(n^2v^2).
$$
\end{lemma}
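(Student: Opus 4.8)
The plan is to run the martingale--difference argument of Lemma~\ref{lem11}, now for the third power. By (\ref{g39}), $\tr\bbA^{-1}(z)-\E\tr\bbA^{-1}(z)=-\sum_{k=1}^n u_k$, where $u_k=(\E_k-\E_{k-1})\big(\beta_k(z)\bbs_k^T\bbA_k^{-2}(z)\bbs_k\big)$ is a martingale difference sequence, and from (\ref{m17}) there is the crude deterministic bound $|u_k|\le 2|v|^{-1}$. Expanding the cube, every monomial in which the largest of the three indices occurs exactly once has zero expectation (because $\E_{k-1}u_k=0$), so
\[
\E\big(\tr\bbA^{-1}(z)-\E\tr\bbA^{-1}(z)\big)^3=-\sum_{k=1}^n\E u_k^3-3\sum_{j<k}\E\big(u_ju_k^2\big),
\]
and it suffices to bound each of these two sums by $Mn/v^2$.

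First I would record the moment inputs for $u_k$, exactly as in the proof of Lemma~\ref{lem11}. Writing $\beta_k=\beta_k^{\tr}-(\beta_k^{\tr})^2\eta_k+\beta_k(\beta_k^{\tr})^2\eta_k^2$ and $\bbs_k^T\bbA_k^{-2}\bbs_k=\eta_k^{(2)}+n^{-1}\tr\bbA_k^{-2}$, and noting that $(\E_k-\E_{k-1})\big(\beta_k^{\tr}\,n^{-1}\tr\bbA_k^{-2}\big)=0$ because this term does not involve $\bbs_k$, one gets
\[
u_k=\E_k\big(\beta_k^{\tr}\eta_k^{(2)}\big)-(\E_k-\E_{k-1})\big((\beta_k^{\tr})^2\bbs_k^T\bbA_k^{-2}\bbs_k\,\eta_k\big)+(\E_k-\E_{k-1})\big(\beta_k(\beta_k^{\tr})^2\bbs_k^T\bbA_k^{-2}\bbs_k\,\eta_k^2\big).
\]
By Jensen's inequality for the first piece and by (\ref{h32}), (\ref{h34}), (\ref{h1}) and (\ref{m17}) for the three pieces, $\E u_k^4\le M/(n^2v^6)$; also $\sum_k\E u_k^2=\E(\tr\bbA^{-1}-\E\tr\bbA^{-1})^2$, which by Lemma~\ref{lem11} together with (\ref{h48**}) is $O\big(v^{-2}|z+c_n-1+2zc_nm_n^0(z)|^{-1}\big)$. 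Since $|u_k|\le 2|v|^{-1}$, the diagonal sum obeys $|\sum_k\E u_k^3|\le 2|v|^{-1}\sum_k\E u_k^2$, which is $O(nv^{-2})$ under the hypothesis $v\ge M_1/\sqrt n$ (using $|z+c_n-1+2zc_nm_n^0(z)|\ge M\sqrt v$ from Lemma~\ref{lem11a}); so the diagonal sum is harmless.

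For the off-diagonal sum put $S_{k-1}=\sum_{j<k}u_j$. Using $\E S_{k-1}=0$, the $\sigma(\bbs_1,\dots,\bbs_{k-1})$-measurability of $S_{k-1}$, and $\E_{k-1}\big(u_k^2-\E_{k-1}(u_k^2)\big)=0$, one rewrites $\sum_{j<k}\E(u_ju_k^2)=\sum_k\E\big(S_{k-1}[\E_{k-1}(u_k^2)-\E u_k^2]\big)$; by Cauchy--Schwarz and the law of total variance (so that $\mathrm{Var}(\E_{k-1}(u_k^2))\le\E u_k^4$), this is $\le\sum_k(\E S_{k-1}^2)^{1/2}(\E u_k^4)^{1/2}$, which closes to $O(nv^{-2})$ whenever $u$ stays in a compact subinterval of $(a,b)$. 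It is convenient here to observe that the dominant piece of $u_k$ is $\E_k(\beta_k^{\tr}\eta_k^{(2)})=\bbs_k^T\bbM_k\bbs_k-n^{-1}\tr\bbM_k$ with $\bbM_k=\E_k\big(\beta_k^{\tr}(z)\bbA_k^{-2}(z)\big)$ \emph{independent of} $\bbs_k$, so that $\E_{k-1}$ of its square and cube are explicit functionals of $\bbM_k$ via (\ref{i1}), and the genuinely third-order contributions are controlled by the dedicated estimates (\ref{m32}) and (\ref{m33}) of Lemma~\ref{lem12}.

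The main obstacle is the bookkeeping near the endpoints of $[a,b]$, where $|z+c_n-1+2zc_nm_n^0(z)|$ degenerates like $v^{1/2}$ and the crude forms of the above estimates are no longer sharp enough. As in the proof of Lemma~\ref{lem14}, one must carry this edge factor through: expand $u_ju_k^2$ into the pieces coming from the three-term form of $u_j,u_k$ above and estimate each using (\ref{h1}), (\ref{h32})--(\ref{h40}), (\ref{m17}), (\ref{f44}), (\ref{m32}), (\ref{m33}) and Lemmas~\ref{lem7}, \ref{lem14}, \ref{lem1}, in the ``divided by $g(z)$'' versions, where (recall $|g(z)|\asymp|z+c_n-1+2zc_nm_n^0(z)|$ by Lemma~\ref{lem11a} and (\ref{g36})) the $g(z)$ factors one inserts cancel the degenerating edge factors, leaving the clean bound $M/(n^2v^2)$. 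Squeezing out every half-power of $v$ so that all terms fit inside $Mn^{-1}v^{-2}$ under only $v\ge M_1/\sqrt n$ is the delicate point; it forces the use of the sharpest available inputs, in particular (\ref{h32}) ($\E|\beta_k^{\tr}\eta_k^{(2)}|^8\le Mn^{-4}v^{-12}$), which itself rests on the sharp order $\E n^{-1}\tr\bbA_k^{-2}(z)=O(v^{-1/2})$ obtained in Lemma~\ref{lem14}.
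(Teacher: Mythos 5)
Your reduction is fine as far as it goes: the martingale expansion, the vanishing of all monomials in which the largest index appears once, the crude bound $|u_k|\le 2v^{-1}$ for the diagonal sum, and the identity $\sum_{j<k}\E(u_ju_k^2)=\sum_k\E\big(S_{k-1}[\E_{k-1}(u_k^2)-\E u_k^2]\big)$ are all correct, and the diagonal term does close. The gap is in the off-diagonal term, and it is exactly the point where the lemma is hard. Your Cauchy--Schwarz step gives $\sum_k(\E S_{k-1}^2)^{1/2}(\E u_k^4)^{1/2}$, and even with the sharpest available inputs --- $\E S_{k-1}^2\le \E(\tr\bbA^{-1}-\E\tr\bbA^{-1})^2\le Mv^{-2}|z+c_n-1+2zc_nm_n^0(z)|^{-1}$ from Lemma~\ref{lem11}, (\ref{h48**}) and the remark after Lemma~\ref{lem14}, and $\E u_k^4\le Mn^{-2}v^{-6}$ --- you get at the spectral edge (where $|z+c_n-1+2zc_nm_n^0(z)|\asymp\sqrt v$ by (\ref{g18})) a bound of order $v^{-17/4}$, whereas the lemma demands $Mnv^{-2}$; at $v\asymp n^{-1/2}$ this misses by a factor of order $n^{1/8}$. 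You notice this yourself (``the main obstacle is the bookkeeping near the endpoints''), but the proposed fix --- invoking the ``divided by $g(z)$'' versions of the estimates so that ``the $g(z)$ factors cancel'' --- is not a mechanism: the statement of Lemma~\ref{lem13} contains no $g(z)^{-1}$, the quantity $\sum_k\E\big(S_{k-1}[\E_{k-1}(u_k^2)-\E u_k^2]\big)$ offers no slot into which a $g(z)$ can be inserted, and no cancellation is identified that would recover the lost half-powers of $v$. Note also that the lemma must be edge-uniform precisely because in Section~\ref{lim-mean} the term $f_{n42}=nb_1^3\E(\Gamma_1)^3$ is required to be $O(n^{-1}v^{-2})$ \emph{before} the division by $g(z)$ in (\ref{f38}); an extra $|z+c_n-1+2zc_nm_n^0(z)|^{-1}$ there would not be acceptable.

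What the paper actually does for the cross term $\E(u_{k_1}^2u_{k_2})=\E[u_{k_2}\E_{k_2}(u_{k_1}^2)]$ is a cancellation argument that an aggregated Cauchy--Schwarz destroys: it splits $u_{k_2}=u_{k_21}+u_{k_22}+u_{k_23}$ according to the three pieces of the expansion of $\beta_{k_2}\bbs_{k_2}^T\bbA_{k_2}^{-2}\bbs_{k_2}$, and, for the dominant piece $u_{k_23}$, removes $\bbs_{k_2}$ from $\E_{k_2}(u_{k_1}^2)$ by writing $u_{k_1}=u_{k_11}+u_{k_12}+u_{k_13}+u_{k_14}$ with $u_{k_14}$ built from $\bbA_{k_1k_2}^{-1}$ and hence independent of $\bbs_{k_2}$, so that $\E[u_{k_23}\E_{k_2}(u_{k_14}^2)]=0$ exactly; the remaining mixed terms are then estimated one by one using the third-order moment identity (\ref{m24})--(\ref{m27}) (the analogue of (\ref{i1})), the entrywise concentration bounds of Lemma~\ref{lem15}, the sharp $O(v^{-1/2})$ order of $\E n^{-1}\tr\bbA_1^{-2}$ from Lemma~\ref{lem14}, and (\ref{m33}) of Lemma~\ref{lem12}. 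You cite several of these ingredients, but the decisive steps --- the four-term decomposition isolating the $\bbs_{k_2}$-independent part, the exact orthogonality, and the entrywise estimates that convert the would-be $v^{-1/2}$ edge losses into genuine $O(n^{-2}v^{-2})$ errors --- are not carried out, and the shortcut offered in their place provably does not reach the stated bound in the regime $v\asymp n^{-1/2}$, $u$ near $a$ or $b$.
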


\begin{proof}
 A direct calculation indicates that
\begin{equation}\E\big[\tr\bbA^{-1}(z)-\E\tr\bbA^{-1}(z)\big]^3=\sum\limits_{k=1}^n\E(u_{k})^3
+3\sum\limits_{k_1\neq k_2}^n\E(u_{k_1}^2u_{k_2}).\label{m35}
\end{equation}
Referring to the expressions of $q_1,q_2,q_3$ in the last lemma we have
$$
n^{-3}\big|\sum\limits_{k=1}^n\E\big[\big(\E_k-\E_{k-1}\big)(\beta_k\bbs_k^T\bbA_k^{-2}(z)\bbs_k)\big]^3\big|\leq Mn^{-3}\sum\limits_{k=1}^n\big[\E\big|\beta_k^{\tr}\eta_k^{(2)}\big|^3
$$
$$
+\E\big|(\beta_k^{\tr})^2\bbs_k^T\bbA_k^{-2}(z)\bbs_k\eta_k
\big|^3
+
\E\big|\beta_k(\beta_k^{\tr})^2\bbs_k^T\bbA_k^{-2}(z)\bbs_k\eta_k^2
\big|^3\big]\leq M/(n^2v^{3/2}),
$$
where the estimates can be obtained as in (\ref{h32}), (\ref{h34}) and (\ref{h39}).

 When $k_1<k_2$,
$$
\E(u_{k_1}^2u_{k_2})=0.
$$
When $k_1> k_2$ we have
\begin{equation}\label{m41}
\E(u_{k_1}^2u_{k_2})=\E\big[u_{k_2}E_{k_2}(u_{k_1}^2)\big].
\end{equation}
Here we reminder that the term $ \E_{k_2}(u_{k_1}^2)$ is similar to $\E(u_k^2)$, given in Lemma \ref{lem11}, except that the former is the conditional expectation and the later is the expectation. By the expressions of $q_j,j=1,2,3$ in Lemma \ref{lem11} we may write
\begin{equation}\label{m42}
u_{k_2}=u_{k_21}+u_{k_22}+u_{k_23},
\end{equation} where
$$u_{k_21}=-\big(\E_{k_2}-\E_{{k_2}-1}\big)\big((\beta_{k_2}^{\tr})^2\bbs_{k_2}^T\bbA_{k_2}^{-2}(z)\bbs_{k_2}\eta_{k_2}\big),
$$ and $$u_{k_22}=\big(\E_{k_2}-\E_{{k_2}-1}\big)\big(\beta_{k_2}
(\beta_{k_2}^{\tr})^2\bbs_{k_2}^T\bbA_{k_2}^{-2}(z)\bbs_{k_2}\eta_{k_2}^2\big),\
u_{k_23}=\big(\E_{k_2}-\E_{{k_2}-1}\big)\big(\beta_{k_2}^{\tr}\gamma_{k_2}\big).
$$
We claim that
\begin{equation}\label{m38}
n^{-3}\sum\limits_{k_1\neq k_2}^n\E\big[u_{k_2j}E_{k_2}(u_{k_1}^2)\big]=O(n^{-2}v^{-2}),\ j=1,2.
\end{equation}
Indeed, the estimates for $q_2,q_3,q_5,q_6$ involved in $\E_{k_2}(u_{k_1}^2)$ are straightforward by the argument from (\ref{h32}) to (\ref{h40})
 in Lemma \ref{lem11} for $\E(u_k^2)$. Here and below, $q_j,j=1,\cdots,6$ are obtained, respectively, from $\{q_j\}$ in Lemma \ref{lem11} with $k$ replaced by $k_1$. To deal with $q_1$,
from (\ref{h32}) and (\ref{h33}) we see that
$$
n^{-3}\sum\limits_{k_1\neq k_2}^n\E\big[u_{k_22}\E_{k_2}(q_1)\big]=O(n^{-2}v^{-2}).
$$
As for $u_{k_21}$, we use (\ref{i1}) and Lemma \ref{lem15} first to obtain
\begin{equation}\label{h45}
\E_{k-1}\big((\beta_{k}^{\tr})^2\gamma_{k}\eta_{k}\big)=\E_{k-1}\big( (\beta_{k}^{\tr})^2n^{-2}\tr\bbA_{k}^{-3}(z)\big)+O(n^{-2}v^{-3}).
\end{equation} By (\ref{i1}), (\ref{f10}), (\ref{h45}), (\ref{h34}), Lemmas \ref{lem15}, \ref{lem1}, \ref{lem7} and \ref{lem14} we then have
$$
\E\big[u_{k_21}\E_{k_2}(q_1)\big]=n^{-1}\E\big[u_{k_21}\E_{k_2}(\beta_{k_1}^{\tr}\underline{\beta}_{k_1}^{\tr}
n^{-1}\tr\bbA_{k_1}^{-2}(z)\underline{\bbA}_{k_1}^{-2}(z))\big]+O(n^{-1}v^{-2})
$$
$$
=n^{-1}\E\big[u_{k_21}\E_{k_2}(
\beta_{k_1k_2}^{\tr}\underline{\beta}_{k_1k_2}^{\tr}n^{-1}\tr\bbA_{k_1k_2}^{-2}(z)\underline{\bbA}_{k_1k_2}^{-2}(z))\big]
+O(n^{-1}v^{-2})
$$
$$
=n^{-2}\E\big[n^{-1}\tr\bbA_{k_1k_2}^{-3}(z)(\beta_{k_2}^{\tr})^2\E_{k_2}(
\beta_{k_1k_2}^{\tr}\underline{\beta}_{k_1k_2}^{\tr}n^{-1}\tr\bbA_{k_1k_2}^{-2}(z)\underline{\bbA}_{k_1k_2}^{-2}(z))\big]
+O(n^{-1}v^{-2})
$$
$$
=O(n^{-2}v^{-4}),
$$
where we use the fact that via (\ref{f10})
\begin{equation}\label{h49}
|n^{-1}\tr\bbA_{k_1}^{-2}(z)\underline{\bbA}_{k_1}^{-2}(z)-n^{-1}\tr\bbA_{k_1k_2}^{-2}(z)
\underline{\bbA}_{k_1k_2}^{-2}(z)\Big|\leq M/(nv^4).
\end{equation}
 To handle $q_4$, by (\ref{h33}), (\ref{h32}) and (\ref{h34}), it is straightforward to check that
$$n^{-3}\sum\limits_{k_1\neq k_2}^n\E\big[u_{k_22}\E_{k_2}(q_4)\big]=O(n^{-2}v^{-2}).$$
As for $u_{k_21}$,
it follows from (\ref{h34}), (\ref{h45}), Lemmas \ref{lem14} and \ref{lem7} that
$$
n^{-1}\E\big[u_{k_21}\E_{k_2}(q_4)\big]
=n^{-1}b_1^3 \E\big[u_{k_21}\E_{k_2}\big(\E_{k_1}(\gamma_{k_1})\E_{k_1}(\gamma_{k_1}\eta_{k_1})\big)\big]
+O(n^{-2}v^{-2})=O(n^{-2}v^{-2}),
$$
where we use the fact that the arguments for (\ref{m33}) are also applicable to $\E_{k_2}\big(\E_{k_1}(\gamma_{k_1})$ $\E_{k_1}(\gamma_{k_1}\eta_{k_1})\big)$.

Next consider $\E\big[u_{k_23}\E_{k_2}(u_{k_1}^2)\big]$. The strategy is to remove $\bbs_{k_2}$ from $\E_{k_2}(u_{k_1}^2)$ so that we make use of the fact that
\begin{equation}\label{h17}
\E\big[u_{k_23}\E_{k_2}(u_{k_14}^2)\big]=0
\end{equation}
by the fact that $u_{k_14}$ is independent of $\bbs_{k_2}$ with $u_{k_14}=(\E_{k_1}-\E_{k_1-1})(\beta_{k_2k_1}\bbs_{k_1}^T\bbA_{k_1k_2}^{-2}(z)\bbs_{k_1}\big)
$. To this end, write
$$
u_{k_1}=u_{k_11}+u_{k_12}+u_{k_13}+u_{k_14},
$$
where
$$
u_{k_11}=(\E_{k_1}-\E_{k_1-1})\big((\beta_{k_1}-\beta_{k_2k_1})\bbs_{k_1}^T\bbA_{k_1k_2}^{-2}(z)\bbs_{k_1}\big]
$$
$$
u_{k_12}=(\E_{k_1}-\E_{k_1-1})\big[(\beta_{k_1}-\beta_{k_2k_1})\bbs_{k_1}^T(\bbA_{k_1}^{-2}(z)
-\bbA_{k_1k_2}^{-2}(z))\bbs_{k_1}\big]
$$
$$
u_{k_13}=(\E_{k_1}-\E_{k_1-1})\big(\beta_{k_2k_1}\bbs_{k_1}^T(\bbA_{k_1}^{-2}(z)-\bbA_{k_1k_2}^{-2}(z))\bbs_{k_1}\big).
$$

 We now substitute $u_{k_11}+u_{k_12}+u_{k_13}+u_{k_14}$ for $u_{k_1}$ in $\E\big[u_{k_23}\E_{k_2}(u_{k_1}^2)\big]$ and evaluate them one by one besides using (\ref{h17}). By (\ref{b22}), (\ref{m17}), Lemmas \ref{lem14} and \ref{lem1} we have
\begin{equation}\label{h21}
\E|\beta_{k_1}\bbs_{k_1}^T\bbA_{k_1}^{-2}(z)\bbs_{k_1}|^8 \leq M\E|\gamma_{k_1}|^8+M|n^{-1}\E\bbA_{k_1}^{-2}(z))|^8+Mv^{-8}\E|\xi_{k_1}|^8
+M\E|\Gamma_1^{(2)}|^8
\leq M/v^4,
\end{equation}
and via (\ref{f7})
\begin{eqnarray}\label{h21*}
 &\E|(\bbs_{k_1}^T\bbA_{k_1k_2}^{-1}(z)\bbs_{k_2})^2\beta_{k_2k_1}\beta_{k_1k_2}|^4\leq M(\E|\zeta_1(z)\beta_{k_1k_2}|^8E|\beta_{k_2k_1}|^8)^{1/2}\non
 &+Mn^{-4}v^{-4}\E|\beta_{k_2k_1}|^4
 \leq Mn^{-4}v^{-4},
\end{eqnarray}
where $\zeta_1(z)=\centre^{k_1}\big(\bbs_{k_1}^T\bbA_{k_1k_2}^{-1}(z)\bbs_{k_2}\bbs_{k_2}^T\bbA_{k_1k_2}^{-1}(z)\bbs_{k_1}\big)
$.
Combining (\ref{h21}), (\ref{h21*}) and (\ref{h32}) we obtain
$$
\E\big[u_{k_23}\E_{k_2}(u_{k_11}+u_{k_12})^2\big]=O(n^{-2}v^{-7/2}).
$$
As in (\ref{h21*}) one may verify that
\begin{equation}\label{h41}
\E|\bbs_{k_1}^T\bbA_{k_1k_2}^{-1}(z)\bbs_{k_2}\bbs_{k_2}^T\bbA_{k_1k_2}^{-2}(z)\bbs_{k_1}\beta_{k_1k_2}|^4
\leq M/(nv^2).
\end{equation}
Thus from (\ref{h21}), (\ref{h21*}), (\ref{h32}), (\ref{h41}), (\ref{i1}), (\ref{f7}) and Lemmas \ref{lem14}, \ref{lem7}, \ref{lem1} we obtain
$$
\E\big[u_{k_23}\E_{k_2}(u_{k_13})^2\big]
$$$$=2\E\big[u_{k_23}\E_{k_2}\big((\E_{k_1}-\E_{k_1-1})(\beta_{k_2k_1}\bbs_{k_1}^TG_{k_1k_2}(z)
\bbs_{k_1}\beta_{k_1k_2})\big)^2\big]+O(n^{-2}v^{-7/2})
$$
$$
=2b_{12}(z)\E\big[u_{k_23}E_{k_2}\big(\E_{k_1}\big((\bbs_{k_1}^TG_{k_1k_2}(z)\bbs_{k_1}-\frac{1}{n}\tr G_{k_1k_2}(z))\beta_{k_1k_2}\big)\big)^2\big]+O(n^{-2}v^{-7/2})
$$
$$
=2b_{12}(z)n^{-2}\E\big[u_{k_23}\E_{k_2}\big(\beta_{k_1k_2}\bbs_{k_2}^T\bbA_{k_1k_2}^{-2}(z)
\underline{\bbA}_{k_1k_2}^{-2}(z)\bbs_{k_2}\underline{\beta}_{k_1k_2}\bbs_{k_2}^T
\underline{\bbA}_{k_1k_2}^{-1}(z)\bbA_{k_1k_2}^{-1}(z)\bbs_{k_2}\big)\big]
$$$$+O(n^{-2}v^{-7/2})
=O(n^{-2}v^{-4}),
$$
where $G_{k_1k_2}(z)=\bbA_{k_1k_2}^{-1}(z)\bbs_{k_2}\bbs_{k_2}^T\bbA_{k_1k_2}^{-2}(z)$. In view of (\ref{h21}), (\ref{h21*}) and (\ref{h32}) we also conclude that
 $$
\E\big[u_{k_23}\E_{k_2}\big((u_{k_11}+u_{k_12})u_{k_13}\big)\big]=O(n^{-1}v^{-2}),
 $$
 because via (\ref{h41}), (\ref{h21*}) and (\ref{h21*})
 \begin{equation}\label{h44}
\E|\beta_{k_2k_1}\bbs_{k_1}^T(\bbA_{k_1}^{-2}(z)-\bbA_{k_1k_2}^{-2}(z))\bbs_{k_1}|^2\leq M.
 \end{equation}
Likewise, by (\ref{h21}), (\ref{h21*}), (\ref{h32}) and (\ref{h44}) we have
$$
\E\big[u_{k_23}E_{k_2}(u_{k_12}u_{k_14})\big]=O(n^{-1}v^{-2}).
$$

Moreover by (\ref{b22}), (\ref{h21}), (\ref{h21*}), (\ref{h32}), (\ref{i1}) and (\ref{m17}), we have
$$
\E\big[u_{k_23}\E_{k_2}(u_{k_11}u_{k_14})\big]
$$
$$
=b_1\E\big[u_{k_23}\E_{k_2}\big((\E_{k_1}-\E_{k_1-1})
(\beta_{k_2k_1}\rho_{k_1k_2}\bbs_{k_1}^T\bbA_{k_1k_2}^{-2}(z)\bbs_{k_1})u_{k_14}\big)\big]+O(n^{-1}v^{-2})
$$
$$
=n^{-2}b_1\E\big[(\E_{k_1}-\E_{k_1-1})\big(\beta_{k_2k_1}\bbs_{k_1}^T\bbA^{-1}_{k_1k_2}(z)\E_{k_2}
\big(\beta_{k_2}^{\tr}\bbA^{-1}_{k_1k_2}(z)\big)\bbA^{-1}_{k_1k_2}(z)\bbs_{k_1}\bbs_{k_1}^T
\bbA_{k_1k_2}^{-2}(z)\bbs_{k_1}\big)
$$$$\times(u_{k_14})\big]+O(n^{-2}v^{-4})=O(n^{-2}v^{-4}),
$$
where
$$
\rho_{k_1k_2}=\centre^{k_2}\big(\bbs_{k_2}^T\bbA^{-1}_{k_1k_2}(z)\bbs_{k_1}\bbs_{k_1}^T\bbA^{-1}_{k_1k_2}(z)
\bbs_{k_2}\big)
$$
and we also use the fact that
\begin{equation}\label{h22}
\E\big[u_{k_23}\E_{k_2}\big((\E_{k_1}-\E_{k_1-1})
(\beta_{k_2k_1}n^{-1}\bbs_{k_1}^T\bbA^{-2}_{k_1k_2}(z)\bbs_{k_1}\bbs_{k_1}^T
\bbA_{k_1k_2}^{-2}(z)\bbs_{k_1})u_{k_14}\big)\big]=0.
\end{equation}

As for $\E\big[u_{k_23}\E_{k_2}(u_{k_13}u_{k_14})\big]$, on the one hand, using  (\ref{i1}) twice and Lemma \ref{lem1} we obtain
$$
\E\big[u_{k_23}\E_{k_2}\big((\E_{k_1}-\E_{k_1-1})\big(b_{12}\bbs_{k_1}^T\bbA^{-1}_{k_1k_2}(z)\bbs_{k_2}\bbs_{k_2}^T
\bbA^{-2}_{k_1k_2}(z)\bbs_{k_1}\big)u_{k_14}\big)\big]
$$
$$
=\E\big[u_{k_23}\E_{k_2}\big((\E_{k_1}-\E_{k_1-1})\big(b_{12}\rho_{k_1k_2}^{(2)}\big)u_{k_14}\big)\big]
$$
$$
=n^{-2}b^2_{12}\E\big[(\E_{k_1}-\E_{k_1-1})\big(\bbs_{k_1}^T\bbA^{-1}_{k_1k_2}\E_{k_2}
(\bbA^{-2}_{k_1k_2})\bbA^{-2}_{k_1k_2}\bbs_{k_1}\big)u_{k_14}\big]+O(n^{-2}v^{-4})
$$
$$
=n^{-2}b^3_{12}\E\big[(\E_{k_1}-\E_{k_1-1})\big(\bbs_{k_1}^T\bbA^{-1}_{k_1k_2}\E_{k_2}(\bbA^{-2}_{k_1k_2})
\bbA^{-2}_{k_1k_2}\bbs_{k_1}\big)\E_{k_1}(\gamma_{k_2k_1})\big]+O(n^{-2}v^{-4})
$$
$$
=n^{-3}b^3_{12}\E\big[n^{-1}\tr\bbA^{-1}_{k_1k_2}\E_{k_2}(\bbA^{-2}_{k_1k_2})\bbA^{-2}_{k_1k_2}
\E_{k_1}(\bbA^{-2}_{k_1k_2})\big]+O(n^{-2}v^{-4})=O(n^{-2}v^{-4}),
$$
where we also use an equality similar to (\ref{h22}),
$$
\rho_{k_1k_2}^{(2)}=\centre^{k_2}\big(\bbs_{k_2}^T\bbA^{-2}_{k_1k_2}(z)\bbs_{k_1}\bbs_{k_1}^T
\bbA^{-1}_{k_1k_2}(z)\bbs_{k_2}\big)
$$
and
$$
\gamma_{k_2k_1}=\centre^{k_1}\big(\bbs_{k_1}^T\bbA^{-2}_{k_1k_2}(z)\bbs_{k_1}\big).
$$
Similarly one may verify that
$$
\E\big[u_{k_23}\E_{k_2}\big((\E_{k_1}-\E_{k_1-1})\big(\beta_{k_1k_2}\bbs_{k_1}^T
\bbA^{-1}_{k_1k_2}(z)\bbs_{k_2}\bbs_{k_2}^T\bbA^{-2}_{k_1k_2}(z)\bbs_{k_1}\big)
\E_{k_1}(\gamma_{k_2k_1}b_{12})\big)\big]=O(n^{-2}v^{-4}).
$$
Moreover by H\"older's inequality, (\ref{h32}), (\ref{h41}) and Lemma \ref{lem1}
$$
\E\big[u_{k_23}\E_{k_2}\big((\E_{k_1}-\E_{k_1-1})\big(b_{12}\beta_{k_1k_2}\xi_{k_1k_2}
\bbs_{k_1}^T\bbA^{-1}_{k_1k_2}(z)\bbs_{k_2}\bbs_{k_2}^T\bbA^{-2}_{k_1k_2}(z)\bbs_{k_1}\big)
$$$$\times(\E_{k_1}-\E_{k_1-1})(b_{12}\beta_{k_1k_2}\xi_{k_1k_2})\big]=O(n^{-2}v^{-4}).
$$
Via (\ref{f13}), these ensure that
$$
\E\big[u_{k_23}\E_{k_2}\big((\E_{k_1}-\E_{k_1-1})\big(\beta_{k_1k_2}\bbs_{k_1}^T
\bbA^{-1}_{k_1k_2}(z)\bbs_{k_2}\bbs_{k_2}^T\bbA^{-2}_{k_1k_2}(z)\bbs_{k_1}\big)u_{k_14}\big)\big]=O(n^{-2}v^{-4}).
$$

On the other hand, apparently from H\"older's inequality, (\ref{h21}), (\ref{h21*}), (\ref{h41}), (\ref{h32}) and (\ref{h1}) we obtain
$$
\E\big[u_{k_23}\E_{k_2}\big((\E_{k_1}-\E_{k_1-1})
\big(\beta^2_{k_1k_2}(\bbs_{k_1}^T\bbA^{-1}_{k_1k_2}(z)\bbs_{k_2})^2\bbs_{k_2}^T\bbA^{-2}_{k_1k_2}(z)\bbs_{k_1}\big)
$$
$$\times(\E_{k_1}-\E_{k_1-1})\big(\beta_{k_2k_1}\beta^{\tr}_{k_2k_1}\eta_{k_2k_1}\bbs_{k_1}^T
\bbA_{k_1k_2}^{-2}(z)\bbs_{k_1}\big)\big]=O(n^{-2}v^{-4}).
$$
Similar to (\ref{m24}) we obtain
\begin{equation}\label{h23}
\E_{k_2}\big[\E_{k_1}(\rho_{k_2k_1}\beta^2_{k_1k_2}\bbs_{k_2}^T\bbA_{k_1k_2}^{-2}(z)\bbs_{k_1})
\E_{k_1}(\beta^{\tr}_{k_1k_2}\gamma_{k_2k_1})\big]
\end{equation}
$$
=n^{-5/2}\big[\E[(X_{11}^2-1)^2X_{11}]\sum\limits_{i=1}^p\E_{k_2}\big((\bbB_1)_{ii}(\bbB_2)_{ii}y_i\big)
$$$$+2\E X_{11}^3\sum\limits_{i_1\neq j_1}\E_{k_2}\big((\bbB_1)_{i_1j_1}
[(\bbB_2)_{j_1j_1}y_{i_1}+(\bbB_2)_{i_1i_1}y_{j_1}]\big)
$$
$$
+2\E X_{11}^3\sum\limits_{i_1\neq j_1}\E_{k_2}\big((\bbB_1)_{i_1j_1}(\bbB_2)_{i_1j_1}(y_{i_1}+y_{j_1})\big)\big],
$$
where
$$
\bbB_1=\beta^2_{k_1k_2}\bbA_{k_1k_2}^{-1}\bbs_{k_2}\bbs_{k_2}^T\bbA_{k_1k_2}^{-1},\ \bbB_2=\bbA_{k_1k_2}^{-2},\ y_i=\E_{k_1}(\beta^{\tr}_{k_1k_2}\bbs_{k_2}^T\bbA_{k_1k_2}^{-2}\bbe_i).
$$
We obtain from Lemma \ref{lem1}, Lemma \ref{lem15} and Burkholder's inequality
$$
\E|\beta_{k_1k_2}(\bbs_{k}^T\bbA_{k_1k_2}^{-1}\bbe_i)^2|^4\leq
M(\E|\beta_{k_1k_2}|^8\E|\bbs_{k}^T\bbA_{k_1k_2}^{-1}\bbe_i|^{16})^{1/2}\leq M/(n^4v^7).
$$
Similarly
\begin{equation}\label{h43}
\E|\beta_{k_1k_2}y_i|^4\leq M(\E|\beta_{k_1k_2}|^8\E|\beta^{\tr}_{k_1k_2}\bbs_{k_2}^T\bbA_{k_1k_2}^{-2}\bbe_i|^8)^{1/2}\leq M/(nv^{11/2}).
\end{equation}
These, Lemma \ref{lem15} and (\ref{h32}) imply that
$$
\E\big[u_{k_23}n^{-5/2}\sum\limits_{i=1}^p\E_{k_2}\big((\bbB_1)_{ii}(\bbB_2)_{ii}y_i\big)\big]=O(n^{-2}v^{-4}).
$$
 Note that
$$
n^{-5/2}\sum\limits_{i_1\neq j_1}(\bbB_1)_{ij}(\bbB_2)_{j_1j_1}y_{i_1}
=n^{-5/2}\beta^2_{k_1k_2}\bbs_{k_2}^T\bbA_{k_1k_2}^{-1}
\E_{k_1}(\beta^{\tr}_{k_1k_2}\bbA_{k_1k_2}^{-2})\bbs_{k_2}
$$
$$\times\sum\limits_{j}\bbs_{k_2}^T\bbA_{k_1k_2}^{-1}\bbe_j(\bbB_2)_{jj}+n^{-5/2}\sum\limits_{i_1\neq j_1}(\bbB_1)_{ij}(\bbB_2)_{j_1j_1}y_{i_1}.
$$
By Lemma \ref{lem1}, Lemma \ref{lem15} and Burkholder's inequality we also have
\begin{equation}\label{h42}
\E|\beta_{k_1k_2}\bbs_{k}^T\bbA_{k_1k_2}^{-1}\bbe_i|^4\leq
M(\E|\beta_{k_1k_2}|^8\E|\bbs_{k}^T\bbA_{k_1k_2}^{-1}\bbe_i|^{8})^{1/2}\leq M/(n^4v^6)
\end{equation}
and via (\ref{f7}), (\ref{f44})
$$
\E|\beta_{k_1k_2}\bbs_{k_2}^T\bbA_{k_1k_2}^{-1}E_{k_1}(\beta^{\tr}_{k_1k_2}\bbA_{k_1k_2}^{-2})\bbs_{k_2}|^8\leq Mv^{-4}
\E\big(|\sqrt{|\beta_{k_1k_2}|}\|\E_{k_1}(\beta^{\tr}_{k_1k_2}\bbA_{k_1k_2}^{-2})\bbs_{k_2}\||^8\big)
$$$$\leq
Mv^{-4} \big( \E|\beta_{k_1k_2}|^8\E|(\beta^{\tr}_{k_1k_2})^2\bbs_{k_2}\bbA_{k_1k_2}^{-2}(z)\bbA_{k_1k_2}^{-2}(\bar z)\bbs_{k_2}|^8\big)^{1/2}\leq M/v^{16}
$$
These, together with (\ref{h32}),  Lemmas \ref{lem1} and \ref{lem15}, ensure that
$$
\E\big[u_{k_23}n^{-5/2}\sum\limits_{i_1\neq j_1}(\bbB_1)_{ij}(\bbB_2)_{j_1j_1}y_{i_1}\big]=O(n^{-2}v^{-4}).
$$
This argument also works for the remaining terms in (\ref{h23}) so that
$$
\E\big[u_{k_23}\times(\ref{h23})\big]=O(n^{-2}v^{-4}).
$$
As in (\ref{i1}), a direct calculation, together with (\ref{h42}), (\ref{h32}) and an estimate similar to (\ref{h43}), yields
$$\E\big[u_{k_23}\E_{k_2}\big(\E_{k_1-1}(\rho_{k_2k_1}\beta^2_{k_1k_2}\bbs_{k_2}^T\bbA_{k_1k_2}^{-2}(z)\bbs_{k_1})
\E_{k_1}(\beta^{\tr}_{k_1k_2}\gamma_{k_1k_2})\big)\big]$$
$$
=n^{-3/2} \E(X_{11}^3-X_{11}) \sum\limits_{i=1}^p\E\big[u_{k_23}E_{k_2}
\big(\E_{k_1-1}\big(\beta^2_{k_1k_2}(\bbs_{k_2}^T\bbA^{-1}_{k_1k_2}\bbe_i)^2\bbe_i^T\bbA^{-2}_{k_1k_2}\bbs_{k_2}\big)
\E_{k_1}(\beta^{\tr}_{k_1k_2}\gamma_{k_1k_2})\big)\big]
$$
$$
=O(n^{-2}v^{-4}).
$$These ensure
$$
\E\big[u_{k_23}\E_{k_2}\big((\E_{k_1}-\E_{k_1-1})
\big(\beta^2_{k_1k_2}(\bbs_{k_1}^T\bbA^{-1}_{k_1k_2}(z)\bbs_{k_2})^2\bbs_{k_2}^T\bbA^{-2}_{k_1k_2}(z)\bbs_{k_1}\big)
u_{k_14}\big)\big]=O(n^{-2}v^{-4}).
$$
Hence
$$
\E\big[u_{k_23}\E_{k_2}(u_{k_13}u_{k_14})\big]=O(n^{-2}v^{-4}).
$$
Thus the proof is completed.
\end{proof}

\section{The limit of mean function}\label{lim-mean}

The aim in the section is to find the limit of
$$
\frac{1}{2\pi i}\oint K(\frac{x-z}{h})n(\E m_n(z)-m_n^0(z))dz.
$$
It is thus sufficient to investigate the uniform convergence
$nh(\E\underline {m}_n(z)-\underline{m}_n^0(z))$ on the contour. In order to establish Theorem \ref{theo2} we instead apply the estimates in the last section to investigate
$n(\E\underline {m}_n(z)-\underline{m}_n^0(z))$.

Write
$$
\bbA-z\bbI=\sum_{j=1}^n\bbs_j\bbs_j^T-z\bbI.
$$
Multiplying both sides by $\bbA^{-1}(z)$, taking the trace and dividing by $n$ we obtain
\begin{equation}\label{m4}
c_n+zc_nm_n(z)=1-\frac{1}{n}\sum\limits_{j=1}^n\beta_j(z)
\end{equation}
(one may see the equality above (2.2) of \cite{s3}).
Taking expectation on both sides of the equality above and applying (\ref{b22}) we have
\begin{equation}
\label{m1}
c_n+zc_n \E m_n(z)=1-b(z)+b(z)D_n,
\end{equation}
where $$D_n=\E\big[\beta_1(z)(\bbs_1^T\bbA_1^{-1}(z)\bbs_1-\E n^{-1}\tr\bbA^{-1}(z))\big].$$
On the other hand, it follows from (\ref{m2}) and Lemma \ref{lem8} that
\begin{equation}
\label{m3}
c_n+zc_nm_n^0(z)=1-\big(1+c_nm_n^0(z)\big)^{-1}.
\end{equation}
 Taking the difference between (\ref{m1}) and (\ref{m3}), along with (\ref{m3}), yields
\begin{equation}\label{m8}
n\big(\E m_n(z)-m_n^0(z)\big)=nD_n/\big(c_ng(z)\big),
\end{equation}
where $g(z)$ is defined in Lemma \ref{lem11a}.

Considered $z\in\gamma_1\cup\gamma_2$ first. Applying (\ref{b22}) and (\ref{b23}) yields
\begin{eqnarray}\label{b28}
&&\E\big(\tr\bbA_1^{-1}(z)\big)-\E\big(\tr\bbA^{-1}(z)\big)=\E\big(\beta_1\bbs_1^T\bbA_1^{-2}(z)\bbs_1\big)
\non
&=&b_1\E\big([1-b_1\xi_1+b_1\beta_1\xi_1^2(z)]\bbs_1^T\bbA_1^{-2}(z)\bbs_1\big)\non
&=&b_1\E n^{-1}\tr
\bbA_1^{-2}(z)-d_{n1}+d_{n2}+d_{n3},
\end{eqnarray}
where
$$
d_{n1}=b_1^2\E\big[\eta_1\eta_1^{(2)}\big],\
d_{n2}= b_1^2\E\big[\Gamma_1\bbs_1^T\bbA_1^{-2}(z)\bbs_1\big]
=b_1^2\E\big[\Gamma_1\Gamma_1^{(2)}\big],\
d_{n3}=b_1\E\big[\beta_1\xi_1^2\bbs_1^T\bbA_1^{-2}\bbs_1\big].
$$
It follows from (\ref{f7}) and Lemma \ref{lem1} that
$$
|d_{nj}|\leq M/(nv^2),\quad j=1,2,3,
$$
which implies
\begin{equation}
\label{m14}
\E\big(\tr\bbA_1^{-1}(z)\big)-\E\big(\tr\bbA^{-1}(z)\big)=b_1\E n^{-1}\tr
\bbA_1^{-2}(z)+O(n^{-1}v^{-2}).
\end{equation}

Next by (\ref{b22}) \begin{eqnarray}
&n\E\big[\beta_1\bbs_1^T\bbA_1^{-1}(z)\bbs_1\big]-\E(\beta_1)\E\big(\tr\bbA_1^{-1}(z)\big)\label{f27}\\
&=-nb_1^2\E\big[\xi_1\bbs_1^T\bbA_1^{-1}(z)\bbs_1\big]+nb^2_1\E\big[\beta_1\xi_1^2\bbs_1^T\bbA_1^{-1}(z)\bbs_1\big]
-b^2_1\E(\beta_1\xi_1^2)\E\big[\tr\bbA_1^{-1}(z)\big]\non
&=f_{n1}+f_{n2}+f_{n3}+f_{n4},\nonumber
\end{eqnarray}
where
$$
f_{n1}=-nb_1^2\E\eta_1^2, \
f_{n2}=-nb_1^2\E\big(\Gamma_1\bbs_1^T\bbA_1^{-1}(z)\bbs_1\big)
=nb_1^2\E(\Gamma_1)^2, \ f_{n3}=nb^2_1\E\big(\beta_1\xi_1^2\eta_1\big),
$$
and
$$
f_{n4}=b_1^2\big[\E\big(\beta_1\xi_1^2\tr\bbA_1^{-1}\big)-\E(\beta_1\xi_1^2)\E\tr\bbA_1^{-1}\big].
$$

By (\ref{i1}) we have
\begin{equation}
f_{n1}=-nb_1^2\E\eta_1^2=-2b_1^2\E n^{-1}\tr\bbA_1^{-2}+O(n^{-1}v^{-2}).\label{g10}
\end{equation}
By Lemma \ref{lem11} and (\ref{h48**})
\begin{equation}\label{g11}
f_{n2}/g(z)=O(n^{-1}v^{-2}|z+c_n-1+2zc_nm_n^0(z)|^{-1}),
\end{equation}
where we use the fact that via (\ref{m12}) and (\ref{m13})
\begin{equation}\label{g12}
|g(z)|\geq M_2|z+c_n-1+2zc_nm_n^0(z)|, \ M_2>0.
\end{equation}

Consider $f_{n4}$ next. Apply (\ref{b22}) to further write $f_{n4}$ as
$$
f_{n4}=f_{n41}+f_{n42}+f_{n43},
$$
where
$$
f_{n41}=nb_1^3\E(\eta_1^2\Gamma_1),\
f_{n42}=nb_1^3\E(\Gamma_1)^3,
$$
and $$
f_{n43}=-b_1^3\big[\E\big(\beta_1\xi_1^3\tr\bbA_1^{-1}(z)\big)-\E(\beta_1\xi_1^3)\E\tr\bbA_1^{-1}(z)\big].
$$
By H\"older's inequality and Lemma \ref{lem1} we obtain
$$
|f_{n43}|\leq nM\big(\E|\beta_1|^2\E\big|\xi_1^3\Gamma_1\big|^2\big)^{1/2}\leq M/(nv^2).
$$
From (\ref{i1}), Lemmas \ref{lem15} and \ref{lem1} we conclude that
$$
f_{n41}=2b_1^3\E(\Gamma_1\Gamma_1^{(2)})+O(n^{-1}v^{-2})=O(n^{-1}v^{-2}).
$$
 In view of Lemma \ref{lem13} we also have $f_{n42}=O(n^{-1}v^{-2})$. Therefore $f_{n4}=O(n^{-1}v^{-2})$.

By (\ref{b22}) $f_{n3}$ may be further written as
$$
f_{n3}=f_{n31}+f_{n32}+f_{n33},
$$
where
$$
f_{n31}=nb^3_1\E (\eta_1^3 ),\ f_{n32}=2nb^3_1\E (\eta_1^2\Gamma_1 ),\
f_{n33}=nb^3_1\E (\beta_1\xi_1^3\eta_1 ).
$$
Note that $f_{n32}=2f_{n41}$. Lemmas \ref{lem1} and \ref{lem12} ensure, respectively, $f_{n33}=O(n^{-1}v^{-2})$ and
$f_{n31}=O(n^{-1}v^{-2}).$ We then conclude that $f_{n3}=O(n^{-1}v^{-2}).$

Summarizing the above argument (particularly (\ref{m8}), (\ref{m14}) and (\ref{g10}))  we obtain
\begin{equation}\label{f38}
nD_n/g(z)
=-\big(b_1^2/g(z)\big)\E n^{-1}\tr
\bbA_1^{-2}(z)+O(n^{-1}v^{-2}|z+c_n-1+2zc_nm_n^0(z)|^{-1}).
\end{equation}

We would point out that (\ref{f38}), Lemmas \ref{lem11a} and \ref{lem14} imply
\begin{proposition}\label{prop1}
For $v> M/\sqrt{n}$ and $u\in [a,b]$,
\begin{equation}\label{g14}
|\E m_n(z)-m(z)|\leq M/(nv).
\end{equation}
\end{proposition}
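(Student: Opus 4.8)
The plan is to read the estimate off directly from the expansion (\ref{f38}), feeding in the sharp bounds already established in Lemmas \ref{lem11a} and \ref{lem14}. The starting point is (\ref{m8}),
\[
n\big(\E m_n(z)-m_n^0(z)\big)=\frac{nD_n}{c_ng(z)},
\]
so by (\ref{f38}) it suffices to bound
\[
\frac{1}{c_n}\Big(-\frac{b_1^2(z)}{g(z)}\,\E n^{-1}\tr\bbA_1^{-2}(z)+O\big(n^{-1}v^{-2}|z+c_n-1+2zc_nm_n^0(z)|^{-1}\big)\Big).
\]
Since $c_n\to c\in(0,1)$, the prefactor $c_n^{-1}$ is bounded, and we handle the two terms in the bracket separately.

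For the first term, $|b_1(z)|\leq M$ by Lemma \ref{lem1}, the first inequality of (\ref{h48*}) gives $|\E n^{-1}\tr\bbA_1^{-2}(z)|\leq M/\sqrt v$, and (\ref{m10}) gives $|g(z)|\geq M\sqrt v$; multiplying these yields
\[
\Big|\frac{b_1^2(z)}{g(z)}\,\E n^{-1}\tr\bbA_1^{-2}(z)\Big|\leq\frac{M}{\sqrt v}\cdot\frac{1}{\sqrt v}=\frac{M}{v}.
\]
For the remainder term, (\ref{g18}) together with (\ref{m12}) gives $|z+c_n-1+2zc_nm_n^0(z)|=|\sqrt{(a_n-z)(b_n-z)}|\geq M\sqrt v$, so it is $O(n^{-1}v^{-5/2})$; and since $v\geq M_1/\sqrt n$ we may write $n^{-1}v^{-5/2}=v^{-1}\cdot n^{-1}v^{-3/2}\leq M_1^{-3/2}n^{-1/4}v^{-1}$, so the remainder is also $O(v^{-1})$. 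Combining, $|n(\E m_n(z)-m_n^0(z))|\leq M/v$ uniformly for $u\in[a,b]$ and $v\geq M_1/\sqrt n$, i.e. $|\E m_n(z)-m_n^0(z)|\leq M/(nv)$. Since $a_n\to a$, $b_n\to b$ and $m_n^0(z)\to m(z)$ as $c_n\to c$, this is (\ref{g14}) (the passage from $m_n^0$ to $m$ being immediate when $c_n=c$ and otherwise absorbed into the deterministic error).

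There is no serious obstacle here: all the difficult analysis has already been carried out in Lemmas \ref{lem11a}--\ref{lem13} and in the chain of identities leading to (\ref{f38}). The one point worth emphasizing is the exact cancellation that makes the bound work: the $v^{-1/2}$ \emph{growth} of $\E n^{-1}\tr\bbA_1^{-2}(z)$ near the spectral edges is matched precisely by the $v^{-1/2}$ \emph{lower bound} on $|g(z)|$ (both orders being sharp), so that their quotient stays of order $v^{-1}$, and the hypothesis $v\geq M_1/\sqrt n$ is exactly what is needed to fold the $O(n^{-1}v^{-5/2})$ error from (\ref{f38}) into $O(v^{-1})$.
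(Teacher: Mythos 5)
Your proposal is correct and follows essentially the same route as the paper: the authors obtain Proposition \ref{prop1} exactly by combining (\ref{m8}) and (\ref{f38}) with the bounds $|b_1(z)|\leq M$, $|\E n^{-1}\tr\bbA_1^{-2}(z)|\leq M/\sqrt{v}$ from (\ref{h48*}), $|g(z)|\geq M\sqrt{v}$ from (\ref{m10}), and $|z+c_n-1+2zc_nm_n^0(z)|\geq M\sqrt{v}$ from (\ref{m12}), absorbing the $O(n^{-1}v^{-5/2})$ remainder via $v\geq M_1/\sqrt{n}$, which is precisely your computation. The only point the paper leaves as implicit, as you do, is the identification of $m_n^0$ with $m$ in the statement.
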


From (\ref{f33*}) we have
\begin{equation}\label{g13}
n^{-1}\E\big[\tr\bbA_1^{-2}(z)\big]
=\frac{c_n}{z^2(1+\underline{m}_n^0(z))^2}\big(1-\frac{c_n(\underline{m}_n^0(z))^2}{(1+\underline{m}_n^0(z))^2}\big)^{-1}
+O(n^{-1}v^{-2}).
\end{equation}
We then conclude from (\ref{m8}), (\ref{g14}), (\ref{f38}), (\ref{g16}), (\ref{g36}) and (\ref{g13}) that
\begin{eqnarray}
n(\E\underline{m}_n(z)-\underline{m}_n^0(z))&=&\frac{c_n(\underline{m}_n^0(z))^3}{(1+\underline{m}_n^0(z))^3}
\big(1-\frac{c_n(\underline{m}_n^0(z))^2}{(1+\underline{m}_n^0(z))^2}\big)^{-2}\label{g32} \\
&&+O(n^{-1}v^{-2}|z+c_n-1+2zc_nm_n^0(z)|^{-1}). \nonumber
\end{eqnarray}
The case where $z$ lies in the vertical lines on the contour can be
handled similarly as pointed out in the last section with the truncation version of $X_n(z)$.

In view of (\ref{h19}) it remains to find the limit of the following
\begin{equation}\label{f50}
\frac{1}{4\pi i}\oint
K(\frac{x-z}{h})\frac{c_n(\underline{m}_n^0(z))^3}{(1+\underline{m}_n^0(z))^3}
\big(1-\frac{c_n(\underline{m}_n^0(z))^2}{(1+\underline{m}_n^0(z))^2}\big)^{-2}dz,
\end{equation}
which is done in Appendix 3.

\section{The proof of Theorems \ref{rem2} and \ref{theo2}}\label{proof-31}

{\bf Proof of Theorem \ref{rem2}.} Let $x\in (a,b)$. We claim that
\begin{eqnarray*}
&&nh\Big[h^{-1}\int^b_a
K(\frac{x-y}{h})d\F_{c_n}(y)-f_{c_n}(x)\Big]\\
&=&nh\Big[\int^{\frac{x-a}{h}}_{\frac{x-b}{h}}
K(y)f_{c_n}(x-yh)dy-f_{c_n}(x)\Big]\\
&=&nh\Big[f_{c_n}(x)\int^{\frac{x-a}{2h}}_{\frac{x-b}{2h}}
K(y)dy-f_{c_n}(x)\Big] +\text{remainder},
\end{eqnarray*}
where
$$
|\text{remainder}| \leq 4nh^3\big((x-a)^{-2}+(b-x)^{-2}\big)(\|f\|+M)
\int y^2|K(y)| dy \to 0 \ \ \text{as} \ \ n\to \infty.
$$
Indeed, by Taylor's expansion
$$
f_{c_n}(x-yh)=f_{c_n}(x)-f'_{c_n}(x)yh+f''_{c_n}(x-\theta yh)(yh)^2,
$$
where $\theta \in [0,1]$. Moreover note that
$$
nh^2\Big|\int^{+\infty}_{\frac{x-a}{2h}}yK(y)dy+\int^{\frac{x-b}{2h}}_{-\infty}
yK(y)dy\Big|\leq 4nh^3\big((x-a)^{-2}+(b-x)^{-2}\big) \int
y^2|K(y)|dy\rightarrow 0,$$
$$  nh\Big[1-\int^{\frac{x-a}{h}}_{\frac{x-b}{h}} K(y)dy\Big]\leq 4nh^3\big((x-a)^{-2}+(b-x)^{-2}\big) \int y^2|K(y)|dy \rightarrow 0,
$$
and $f''_{c_n}(x-\theta yh)$ is bounded above by a finite constant
depending only on $x$ when $y\in \big((x-b)/(2h), (x-a)/(2h)\big)$.
Thus the proof is complete. \qed

{\bf Proof of Theorem \ref{theo2}.}
Following the truncation steps in \cite{b2} we could
truncate and re-normalize the random variables so that
\begin{equation}\label{f46}
|X_{ij}|\leq \tau_nn^{1/2},\ \E X_{ij}=0,\  \E X_{ij}^2=1,
\end{equation}
where $\tau_nn^{1/3}\rightarrow\infty$ and $\tau_n\rightarrow 0$. Based on this one may then
verify that
\begin{equation}\label{f49}
\E X_{11}^4=3+O(\frac{1}{n}).
\end{equation}

 For any finite constants
$l_1,\cdots,l_d$, by Cauchy's theorem and Fubini's theorem we write
\begin{equation}
\frac{n}{\sqrt{\ln h^{-1} }}\sum\limits_{j=1}^d
l_j\Big(F_n(x_j)-\int^{x_j}_{-\infty}\frac{1}{h}\int
K(\frac{t-y}{h})d\F_{c_n}(y)dt\Big)\label{g42}
\end{equation}
$$=\frac{n}{\sqrt{\ln  h^{-1}
}}\sum\limits_{j=1}^d
l_j\Big(\int^{x_j}_{-\infty}f_n(t)dt-\int^{x_j}_{-\infty}\frac{1}{h}\int
K(\frac{t-y}{h})d\F_{c_n}(y)dt\Big)
$$
$$
=-\frac{n}{2h\pi i\sqrt{\ln
 h^{-1}}}\sum\limits_{j=1}^d l_j(\int^{x_j}_{-\infty}\oint_{\mathcal{C}_1}
K(\frac{t-z}{h})(\tr\bbA^{-1}(z)-nm^0_n(z))dzdt
$$
$$
=-\frac{n}{2h\pi i\sqrt{\ln
 h^{-1}}}\sum\limits_{j=1}^d l_j\oint_{\mathcal{C}_1}
\int^{x_j}_{-\infty}K(\frac{t-z}{h})dt(\tr\bbA^{-1}(z)-nm^0_n(z))dz,
$$
where the contour ${\mathcal{C}_1}$ is defined as before.

Furthermore, we conclude from (\ref{g39}) and integration by parts
that
$$
\frac{1}{2h\pi i\sqrt{\ln  h^{-1}}}\oint_{\mathcal{C}_1}
\int^{x}_{-\infty}K(\frac{t-z}{h})dt(\tr\bbA^{-1}(z)-\E\tr\bbA^{-1}(z))dz
$$
$$
=-\frac{1}{2h\pi i\sqrt{\ln
 h^{-1}}}\sum\limits_{k=1}^n(\E_k-\E_{k-1})\oint_{\mathcal{C}_1}
\int^{x}_{-\infty}K(\frac{t-z}{h})dt\big[\ln \beta_k(z)\big]'dz
$$
\begin{equation}
=\frac{1}{2h\pi i\sqrt{\ln
 h^{-1}}}\sum\limits_{k=1}^n(\E_k-\E_{k-1})\oint_{\mathcal{C}_1}
K(\frac{x-z}{h})\ln
\Big(\frac{\beta_k^{\tr}(z)}{\beta_k(z)}\Big)dz,\label{g40}
\end{equation}
where in the last step one uses the fact that via (\ref{a25})
\begin{equation}\label{g47}
\Big[\int^{x}_{-\infty}K(\frac{t-z}{h})dt\Big]'=K(\frac{x-z}{h}).
\end{equation}
It is observed that the unique difference between (\ref{g40}) and
(\ref{b12}) is that the test function $K'(\frac{x-z}{h})$ there is
replaced by $K(\frac{x-z}{h})$. Therefore, repeating the arguments
in Section 2 we obtain that (\ref{g40}) is asymptotically normal
with covariance (see (\ref{g33}) and (\ref{f62}))
\begin{equation}\label{g43}
-\frac{1}{2h^2\pi^2\ln
 h^{-1}}\oint_{\mathcal{C}_1}\oint_{\mathcal{C}_2}
K(\frac{x_1-z_1}{h})K(\frac{x_2-z_2}{h})\ln(\underline{m}_n^0(z_1)-\underline{m}_n^0(z_2))dz_1dz_2+O(\frac{1}{nh^2}).
\end{equation}

Also, for the nonrandom part we have
\begin{equation}\label{g48}
\frac{1}{2h\pi i\sqrt{\ln  h^{-1}}}\oint_{\mathcal{C}_1}
\Big[\int^{x}_{-\infty}K(\frac{t-z}{h})dt\Big]n(\E\tr\bbA^{-1}(z)-nm_n^0(z))dz.
\end{equation}
Note that
$$
|h^{-1}\int^{x}_{-\infty}K(\frac{t-z}{h})dt|<\infty.
$$
Likewise, repeating the arguments in Section 3 we see that
(\ref{g48}) becomes (see (\ref{g32}))
\begin{equation}\label{g44}
\frac{1}{4h\pi i\sqrt{\ln  h^{-1}}}\oint
\Big[\int^{x}_{-\infty}K(\frac{t-z}{h})dt\Big]\frac{c_n(\underline{m}_n^0(z))^3}{(1+\underline{m}_n^0(z))^3}
\big(1-\frac{c_n(\underline{m}_n^0(z))^2}{(1+\underline{m}_n^0(z))^2}\big)^{-2}dz
+O(\frac{1}{nh^2\sqrt{\ln h^{-1}}}).
\end{equation}
The limits of (\ref{g43}) and (\ref{g44}) are derived in Appendix 3.

Applying a change of variables and Fubini's theorem  we obtain
\begin{eqnarray}\label{v1}
\int^x_{-\infty}\Big[\frac{1}{h}\int
K(\frac{t-y}{h})d\F_{c_n}(y)\Big]dt&=&\int^x_{-\infty}\Big(\int
K(y)f_{c_n}(t-hy)dy\Big)dt\non =\int\Big(
K(y)\int^x_{-\infty}f_{c_n}(t-hy)dt\Big)dy&=&\int
K(y)\F_{c_n}(x-hy)dy.
\end{eqnarray}
By Taylor's expansion we have
$$
\F_{c_n}(x-hy)=\F_{c_n}(x)+hyf_{c_n}(x)+2^{-1}h^2y^2f'_{c_n}(x-\theta
hy),
$$
where $\theta\in (0,1)$. This, together with (\ref{v1}), yields that
\begin{eqnarray*}
&&\int^x_{-\infty}\Big[\frac{1}{h}\int
K(\frac{t-y}{h})d\F_{c_n}(y)\Big]dt\\
&=&\int_{|y| \leq x_0/(2h)} K(y)\big(\F_{c_n}(x)+hyf_{c_n}(x)+2^{-1}h^2y^2f'_{c_n}(x-\theta hy)\big)dy\\
&& \quad +\int_{|y| > x_0/(2h)} K(y)\F_{c_n}(x-hy)dy,
\end{eqnarray*}
where $x_0=\min(x, x-a,b-x)$ is positive since $x\in (a,b)$. Note that
\begin{eqnarray*}
\big|\int_{|y| > x_0/(2h)} K(y)\F_{c_n}(x-hy)dy\big| &\leq& (4h^2/x_0^2)\int y^2|K(y)|dy, \\
\big|\int_{|y| \leq x_0/(2h)} yK(y) dy\big| =\big|\int_{|y| >
x_0/(2h)} yK(y) dy\big| &\leq&( 2h/x_0)\int y^2|K(y)|dy
\end{eqnarray*}
and $f'_{c_n}(x-\theta hy)$ is bounded above by a finite constant
depending only on $x$, and that
$$
\frac{n}{\sqrt{\ln
h^{-1}}}\Big(F_n(x)-\int^x_{-\infty}\Big[\frac{1}{h}\int
K(\frac{t-y}{h})d\F_{c_n}(y)\Big]dt\Big)
$$$$=\frac{n}{\sqrt{\ln h^{-1}}}\Big(F_n(x)-\F_{c_n}(x)\Big)+O\big(\frac{nh^2}{2\sqrt{\ln h^{-1}}}\big).
$$
Hence the proof of Theorem \ref{theo2} is complete. \qed

\section{The proof of Theorem \ref{theo3}} \label{proof-2}

For any $x$, write
$$
\P\Big(\frac{n}{\sqrt{\ln n}}\big(x_{n,\alpha}-x_\alpha\big)\leq x\Big)=\P\Big(F_n\big(x_{\alpha}+\frac{x\sqrt{\ln n}}{n}\big)\geq \alpha\Big)
=\P\big(\hat{F}_n(x)\geq g_n(x)\big),
$$
where
$$
\hat{F}_n(x)=\frac{n}{\sqrt{\ln n}}\big[F_n\big(x_{\alpha}+\frac{x\sqrt{\ln n}}{n}\big)-\F_{c_n}\big(x_{\alpha}+\frac{x\sqrt{\ln n}}{n}\big) \big]
$$
and
$$
g_n(x)=\frac{n}{\sqrt{\ln n}}\big[\alpha-\F_{c_n}\big(x_{\alpha}+\frac{x\sqrt{\ln n}}{n}\big)\big].
$$
By Taylor's expansion we have
$$
g_n(x)\rightarrow -xf_{c}(x_\alpha)
$$
and
$$
\hat{F}_n(x)=\frac{n}{\sqrt{\ln n}}\big[F_n(x_{\alpha})-\F_{c_n}(x_{\alpha}) \big]+o_p(1),
$$
where we use Theorem \ref{rem2} and the fact that $F_n(x)$ and $\F_c(x)$ are both continuous. Theorem \ref{theo3} then follows the above and Theorem \ref{theo2}.

\section{Appendix 1} \label{appen1}

This Appendix collects some frequently used Lemmas.

\begin{lemma}
\label{lem8} (Lemma 2.2 of \cite{b2}) Suppose that $X_1,\cdots,X_n$
are i.i.d real random variables with $\E X_1=0$ and $\E X_1^2=1$. Let
$\bbx=(X_1,\cdots,X_n)^T$ and $\bbD$ be any $n\times n$ complex
matrix. Then for any $p\geq2$
$$
\E|\bbx^T\bbD\bbx-\tr\bbD|^p\leq
M_p\big[(\E|X_1|^4\tr\bbD\bbD^*)^{p/2}+\E|X_1|^{2p}\tr(\bbD\bbD^*)^{p/2}\big].
$$
\end{lemma}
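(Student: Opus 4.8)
The plan is to prove this classical quadratic-form moment bound by separating the diagonal and off-diagonal parts, estimating each by a Rosenthal/Burkholder inequality, and closing the off-diagonal estimate by an induction on $p$. Write $\bbx^T\bbD\bbx-\tr\bbD=\sum_i(X_i^2-1)D_{ii}+\sum_{i\ne j}X_iX_jD_{ij}=:S_1+S_2$ and bound $\E|S_1|^p$ and $\E|S_2|^p$ separately. Since $S_1$ is a sum of independent, mean-zero variables, Rosenthal's inequality gives $\E|S_1|^p\le M_p\big[(\sum_i\E(X_1^2-1)^2|D_{ii}|^2)^{p/2}+\sum_i\E|X_1^2-1|^p|D_{ii}|^p\big]$; using $\E(X_1^2-1)^2\le\E X_1^4$, $\E|X_1^2-1|^p\le M_p\E|X_1|^{2p}$, $\sum_i|D_{ii}|^2\le\tr\bbD\bbD^{*}$, and the fact that the diagonal entries of $\bbD$ are weakly majorized by its singular values (so $\sum_i|D_{ii}|^p\le\sum_i\sigma_i(\bbD)^p=\tr(\bbD\bbD^{*})^{p/2}$), this term is dominated by the right-hand side of the lemma.

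For $S_2$, introduce the filtration $\E_k(\cdot)=\E(\cdot\mid X_1,\dots,X_k)$; then $S_2=\sum_k\gamma_k$ is a martingale-difference sequence with $\gamma_k=X_kw_k$, $w_k=\sum_{j<k}(D_{kj}+D_{jk})X_j$. Burkholder's inequality gives $\E|S_2|^p\le M_p\big[\E(\sum_k\E_{k-1}|\gamma_k|^2)^{p/2}+\sum_k\E|\gamma_k|^p\big]$, where $\sum_k\E_{k-1}|\gamma_k|^2=\sum_k|w_k|^2=\bbx^T\bbM\bbx$ with $\bbM=\bbB_L^{*}\bbB_L$ the positive semidefinite matrix built from $\bbB_L$, the strictly lower-triangular part of $\bbB=\bbD+\bbD^T$, and $\E|\gamma_k|^p=\E|X_1|^p\,\E|w_k|^p$. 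A further use of Rosenthal bounds $\E|w_k|^p$ by $M_p\big[r_k^p+\E|X_1|^p\sum_{j<k}|B_{kj}|^p\big]$, $r_k$ being the Euclidean norm of the $k$th row of $\bbB$; since the squared row norms of $\bbB$ are the diagonal of $\bbB\bbB^{*}$ and hence majorized by its eigenvalues $\sigma_i(\bbB)^2$ (Schur--Horn), one gets $\sum_k r_k^p\le\tr(\bbB\bbB^{*})^{p/2}$, and likewise $\sum_{k,j}|B_{kj}|^p\le\tr(\bbB\bbB^{*})^{p/2}$ for $p\ge2$ by the standard entrywise-vs-Schatten inequality, while $\tr(\bbB\bbB^{*})^{p/2}\le2^p\tr(\bbD\bbD^{*})^{p/2}$ by the triangle inequality for the Schatten $p$-norm. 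Collecting the $\E|X_1|^p$ factors and using $\E|X_1|^p\le\E|X_1|^{2p}$ (as $\E X_1^2=1$), the term $\sum_k\E|\gamma_k|^p$ is at most $M_p\E|X_1|^{2p}\tr(\bbD\bbD^{*})^{p/2}$.

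Finally I would control $\E(\bbx^T\bbM\bbx)^{p/2}$ by induction on $p$, the exponent dropping to $p/2$: since $\bbM$ is positive semidefinite, $\E(\bbx^T\bbM\bbx)^{p/2}\le M_p\big[(\tr\bbM)^{p/2}+\E|\bbx^T\bbM\bbx-\tr\bbM|^{p/2}\big]$, and $\tr\bbM\le\tr\bbB\bbB^{*}\le4\tr\bbD\bbD^{*}$, so the first term is absorbed into $(\E X_1^4\tr\bbD\bbD^{*})^{p/2}$ (recall $\E X_1^4\ge1$); for $2\le p\le4$ the fluctuation term is at most $(\E|\bbx^T\bbM\bbx-\tr\bbM|^2)^{p/4}$ by Jensen, and the elementary variance estimate $\E|\bbx^T\bbM\bbx-\tr\bbM|^2\le M\E X_1^4(\tr\bbM)^2$ closes that range, while for $p>4$ one applies the lemma itself at the smaller exponent $p/2$ and checks that $\tr(\bbM\bbM^{*})$ and $\tr(\bbM\bbM^{*})^{p/4}$ are dominated by $(\tr\bbD\bbD^{*})^2$ and $\tr(\bbD\bbD^{*})^{p/2}$ up to $p$-dependent constants. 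I expect this recursion to be the main obstacle: the conditional variance of the off-diagonal part is again a quadratic form, so the estimate is genuinely self-referential, and at every stage the entrywise and row-wise $\ell^p$-sums coming out of Rosenthal's inequality must collapse onto the Schatten quantity $\tr(\bbD\bbD^{*})^{p/2}$ rather than the larger $(\tr\bbD\bbD^{*})^{p/2}$, which is exactly what forces the majorization and Schatten-norm comparisons (for diagonal entries, for row norms, and — inside the recursion — the boundedness of triangular truncation on the Schatten classes).
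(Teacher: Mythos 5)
Your argument is correct, but there is nothing in the paper to compare it with: the paper does not prove this lemma at all, it simply quotes it as Lemma 2.2 of Bai and Silverstein (2004), and the proof lives in that reference. Your reconstruction follows the same classical strategy as the cited literature — split $\bbx^T\bbD\bbx-\tr\bbD$ into the diagonal part and the off-diagonal martingale, apply Rosenthal resp.\ Burkholder--Rosenthal, and recurse on the exponent through the conditional square function $\bbx^T\bbB_L^{*}\bbB_L\bbx$ — and each step checks out: the weak majorization of the diagonal of $\bbD$ by its singular values and the Schur--Horn bound for the row norms are exactly what is needed to land on $\tr(\bbD\bbD^{*})^{p/2}$ rather than $(\tr\bbD\bbD^{*})^{p/2}$, the factor bookkeeping ($\E|X_1|^{p}\le\E|X_1|^{2p}$, $\E X_1^4\ge1$) is right, and the base case $2\le p\le4$ via the variance of the quadratic form is fine. (Minor simplification: the entrywise bound $\sum_{k,j}|B_{kj}|^p\le\tr(\bbB\bbB^{*})^{p/2}$ already follows from your row-norm majorization, since $\sum_{j}|B_{kj}|^p\le\bigl(\sum_j|B_{kj}|^2\bigr)^{p/2}$, so no interpolation is needed there.) The one place where your route genuinely differs from the classical write-up is the closing of the recursion for $p>4$: you control $\tr\bigl((\bbB_L^{*}\bbB_L)^{p/2}\bigr)=\|\bbB_L\|_{S_p}^p$ by the boundedness of triangular truncation on the Schatten class $S_p$, $1<p<\infty$. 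That theorem (Gohberg--Krein/Macaev) is dimension-free for fixed $p$, so your proof is complete as written; it is, however, a substantially heavier imported tool than the elementary majorization facts the rest of the argument uses, and the argument in the cited source is organized so that only those elementary bounds are required. It would be worth either citing the truncation theorem explicitly or rearranging the recursion to avoid it, since (as you note) cruder bounds such as $\|\bbB_L\|_{op}\le C\log n\,\|\bbB\|_{op}$ would ruin the dimension-free constant.
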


\begin{lemma} Assume that $v\geq M/\sqrt{n}$ and $u\in [a,b]$. Then
\label{lem1} $$|\underline{m}_n^0(z)|\leq M, \ |\E m_n(z)|\leq M,\
|b_1(z)|\leq M, \ \E|\beta_k^{\tr}(z)|^8\leq M,\
\E|\beta_k(z)|^8\leq M;$$
\begin{equation}\label{a39}
\E|\Gamma_k|^8 \leq M/(n^8v^{12}),\quad \E|\Gamma_k^{(2)}|^8\leq
M/(n^8v^{20}),\quad \E|\xi_k(z)|^8\leq
M/(n^4v^4);
\end{equation}
for $m_1=1,2$ and $m_2=0,1,2,m_3=0,1,2,$
\begin{equation}\label{b18}
\E\big|\centre^k\big(\bbs_k^T\bbA_k^{-m_1}(z)\underline{\bbA}_k^{-m_2}(z)\bbA_k^{-m_3}(z)\bbs_k\big)
\big|^8\leq M/(n^4v^{8m_1+8m_2+8m_3-4}),
\end{equation}
where $\centre^k$ is defined in (\ref{h24}) and $\underline{\bbA}_k^{-1}(z)$ defined right before Section 3.1;
\begin{equation}
|u_n(z)|=|z-(1-n^{-1})b_{12}(z)|^{-1}\leq M.\label{m44}
\end{equation}
\end{lemma}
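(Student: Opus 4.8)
My plan is to prove the eight bundled estimates in increasing logical order: (i) the purely analytic Marchenko--Pastur bounds on $\underline m_n^0,m_n^0,b_1,b_{12},u_n$ and $\E m_n$; (ii) the conditional quadratic-form bounds (\ref{b18}) and the $\eta_k,\xi_k$ estimates in (\ref{a39}) via Lemma \ref{lem8}; (iii) the martingale estimates (\ref{a39}) for $\Gamma_k,\Gamma_k^{(2)}$; (iv) $\E|\beta_k^{\tr}|^8\le M$ and $\E|\beta_k|^8\le M$; each step using only the earlier ones, and $v\ge M_1n^{-1/2}$ with $M_1$ large used to make errors of size $(nv)^{-1},(nv^{3/2})^{-1},(nv^2)^{-1},n^{-1}$ small. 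For (i): from (\ref{m2*}) and (\ref{h31}) one checks that for $u\in[a,b]$, $|v|\le v_0$ the quantities $|m_n^0(z)|,|\underline m_n^0(z)|,|1+c_nm_n^0(z)|$ and their reciprocals, together with $|z(1+\underline m_n^0(z))|^{-1}$, are bounded: $z$ is bounded away from $0$ (as $a>0$), the MP density is bounded and H\"older continuous on $[a,b]$ so $m_n^0$ has bounded boundary values there, and at the edges the discriminant $(z-1-c_n)^2-4c_n$ vanishes, leaving $1+c_nm_n^0$ bounded above and below since $c_n\to c\in(0,1)$. Using exchangeability, $1+n^{-1}\E\tr\bbA_1^{-1}(z)=b_1(z)^{-1}$; combining the rank-one bound (\ref{f10}), the classical rate $|\E m_n(z)-m_n^0(z)|\le M/(nv^{3/2})$ of \cite{g2}, and the identity $(1+c_nm_n^0)^{-1}=-z\underline m_n^0$ (which is (\ref{m3}) rewritten), I get $b_1(z)=-z\underline m_n^0(z)+o(1)$, hence $|\E m_n(z)|\le M$, $|b_1(z)|\le M$, and (via $|b_1-b_{12}|\le M/(nv)$) $|b_{12}(z)|\le M$; then $z-(1-n^{-1})b_{12}(z)=z(1+\underline m_n^0(z))+o(1)$ stays bounded below, which is (\ref{m44}).

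For (ii): since $\bbs_k$ is independent of $\bbA_k^{-1}(z)$ and $\underline{\bbA}_k^{-1}(z)$ and $\E[\bbs_k\bbs_k^T]=n^{-1}\bbI$, for a product $\bbM$ of these resolvents $\centre^k(\bbs_k^T\bbM\bbs_k)=\bbs_k^T\bbM\bbs_k-n^{-1}\tr\bbM$, and I apply Lemma \ref{lem8} with $\bbD=\bbM/n$, $p=8$. The exponent $8m_1+8m_2+8m_3-4$ in (\ref{b18}), rather than the naive $8(m_1+m_2+m_3)$, comes from the bound
$$
\tr(\bbM\bbM^*)\ \le\ v^{-2(m_2+m_3)}\,\tr\big(\bbA_k^{-m_1}(z)\bbA_k^{-m_1}(\bar z)\big)\ \le\ v^{-2(m_1+m_2+m_3)+1}\,\Im\tr\bbA_k^{-1}(z),
$$
which trades one resolvent pair for the single factor $v^{-1}\Im\tr\bbA_k^{-1}(z)$, whose expectation is only $O(n)$ by step (i) (not $O(n/v)$). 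Together with $\E X_{11}^{32}<\infty$ (so $\E|X_{11}|^{16}\le M$ after truncation, controlling the diagonal term $\E|X_1|^{16}\tr(\bbD\bbD^*)^4$ of Lemma \ref{lem8}), this yields (\ref{b18}) and the conditional estimate $\E\big(|\eta_k(z)|^{2m}\mid\bbA_k^{-1}\big)\le Mn^{-m}\big(n^{-1}\tr\bbA_k^{-1}(z)\bbA_k^{-1}(\bar z)\big)^m$ used throughout.

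For (iii): I use the martingale decomposition $n\Gamma_k=\tr\bbA_k^{-1}(z)-\E\tr\bbA_k^{-1}(z)=-\sum_{j\ne k}(\E_j-\E_{j-1})\big[\beta_{kj}(z)\bbs_j^T\bbA_{kj}^{-2}(z)\bbs_j\big]$ obtained from (\ref{f6}). The essential observation is that $\bbA_{kj}$ does not involve $\bbs_j$, so $(\E_j-\E_{j-1})\big[b_{12}\,n^{-1}\tr\bbA_{kj}^{-2}\big]=0$; expanding $\beta_{kj}$ by (\ref{f13}), only the centered quadratic form $b_{12}\big(\bbs_j^T\bbA_{kj}^{-2}\bbs_j-n^{-1}\tr\bbA_{kj}^{-2}\big)$ and the $O\big((nv^2)^{-1/2}\big)$-size remainder $(\beta_{kj}-b_{12})\bbs_j^T\bbA_{kj}^{-2}\bbs_j$ survive the difference, so by step (ii) each martingale difference has conditional $L^2$-norm $O(n^{-1/2}v^{-3/2})$ rather than the naive $O(v^{-1})$. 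Burkholder's inequality then gives $\E|n\Gamma_k|^8\le M\big(\sum_j n^{-1}v^{-3}\big)^4+(\text{l.o.})=Mv^{-12}$, i.e. $\E|\Gamma_k|^8\le M/(n^8v^{12})$, and repeating with $\bbA_k^{-2}$ in place of $\bbA_k^{-1}$ gives $\E|\Gamma_k^{(2)}|^8\le M/(n^8v^{20})$. Finally $\xi_k=\eta_k+\Gamma_k$, and feeding $\E|\Gamma_k|^4=o(1)$ into $n^{-1}\tr\bbA_k^{-1}(z)\bbA_k^{-1}(\bar z)=v^{-1}\big(n^{-1}\Im\E\tr\bbA_k^{-1}(z)+\Im\Gamma_k\big)$ upgrades the conditional $\eta_k$-bound to $\E|\eta_k(z)|^8\le M/(n^4v^4)$, whence $\E|\xi_k(z)|^8\le M/(n^4v^4)$.

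For (iv): on $\{\|\bbA_k\|\le\mu_1\}$, which has probability $1-o(n^{-l})$ by (\ref{h8}), the imaginary part of the denominator forces $|\beta_k|,|\beta_k^{\tr}|\le M/v$ pointwise, and the complement contributes negligibly. Writing $\beta_k^{\tr}=b_1/(1+b_1\Gamma_k)$ and splitting on $\{|b_1\Gamma_k|\le 1/2\}$ (where $|\beta_k^{\tr}|\le 2|b_1|\le M$) versus its complement (where $|\beta_k^{\tr}|^8\le(M/v)^8$ and, by Markov together with the $16$th-moment analogue of the $\Gamma_k$-bound, $\P(|b_1\Gamma_k|>1/2)\le M\,\E|\Gamma_k|^{16}\le M/(n^{16}v^{24})$, so the contribution is $\le M/(n^{16}v^{32})=M/(nv^2)^{16}\le M$), one gets $\E|\beta_k^{\tr}|^8\le M$, and the same argument gives $\E|\beta_k^{\tr}|^p\le M$ for every fixed $p$; $\E|\beta_k|^8\le M$ then follows from (\ref{b22}), the $\eta_k$-moments of step (iii), and $|\beta_k|\le M/v$ for the quadratic remainder term. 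The main obstacle is precisely this last point: $\beta_k$ and $\beta_k^{\tr}$ are \emph{not} bounded pointwise, only by $M/v$, which diverges as $v\downarrow M_1n^{-1/2}$, so uniform boundedness of their eighth moments is possible only because the fluctuation estimates (\ref{a39}) are sharp enough that $(M/v)^8\,\P(|\Gamma_k|\text{ large})=O\big((nv^2)^{-16}\big)$; and getting those sharp $v$-exponents in turn rests on the martingale cancellation above and on replacing the crude bound $n^{-1}\tr\bbA_k^{-1}(z)\bbA_k^{-1}(\bar z)\le v^{-2}$ by $v^{-1}\Im\tr\bbA_k^{-1}(z)$, whose expectation stays $O(1)$ --- which is exactly where the hypothesis $v\ge M_1/\sqrt n$ with $M_1$ sufficiently large is used in an essential way.
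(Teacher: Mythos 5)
Your proposal is correct in substance, but it is considerably more self-contained than what the paper actually does, because the paper's proof of this lemma is largely an assembly of citations. In the paper, $|\underline{m}_n^0(z)|\le M$ is taken from (6.1) of \cite{ker}; $|\E m_n(z)|\le M$ and $|b_1(z)|\le M$ are quoted from \cite{g2}; and, most importantly, the first bound in (\ref{a39}) --- the estimate $\E|\Gamma_k|^8\le M/(n^8v^{12})$, which is the engine of the whole lemma --- is simply cited from Lemma 6.2 of \cite{g2}, with $\E|\Gamma_k^{(2)}|^8\le M/(n^8v^{20})$ then deduced by Cauchy's integral formula on a circle of radius of order $v$, rather than by rerunning the martingale argument for $\bbA_k^{-2}(z)$ as you propose. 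What the paper proves directly are the short steps, and there your arguments coincide with its in all essentials: (\ref{b18}) via Lemma \ref{lem8} combined with the identity $n^{-1}\tr\bbA_k^{-1}(z)\bbA_k^{-1}(\bar z)=v^{-1}\Im\big(n^{-1}\tr\bbA_k^{-1}(z)\big)$, which is exactly your exponent-saving device; $\E|\beta_k^{\tr}(z)|^8\le M$ via the expansion $\beta_1^{\tr}=b_1-b_1^2\Gamma_1+\beta_1^{\tr}b_1^2\Gamma_1^2$ together with the sixteenth moment of $\Gamma_1$ (your event split on $\{|b_1\Gamma_k|>1/2\}$ is equivalent bookkeeping of the same input); $\E|\beta_k(z)|^8\le M$ via the expansion in $\eta_1$ where you use (\ref{b22}) in $\xi_1$; $\E|\xi_k|^8$ via $\xi_k=\eta_k+\Gamma_k$; and (\ref{m44}) via (\ref{m46}) and (\ref{g36}), matching your computation. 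Thus what your write-up adds relative to the paper is a sketch of the G\"otze--Tikhomirov moment bound itself (martingale decomposition, cancellation of $b_{12}\,n^{-1}\tr\bbA_{kj}^{-2}(z)$ under $\E_j-\E_{j-1}$, Burkholder's inequality) and direct derivations of the bounds on $\E m_n$, $b_1$, $b_{12}$ from the MP equations and the rate $|\E m_n(z)-m_n^0(z)|\le M/(nv^{3/2})$; both routes buy the same conclusion, yours at the cost of redoing \cite{g2}, the paper's at the cost of an external dependence.

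One caveat on your ordering. In step (ii) you assert that (\ref{b18}) needs only that the expectation of $\Im\tr\bbA_k^{-1}(z)$ is $O(n)$; but Lemma \ref{lem8} at $p=8$ requires $\E(\tr\bbD\bbD^*)^{4}$, i.e.\ a fourth moment of $\Im\big(n^{-1}\tr\bbA_k^{-1}(z)\big)$, whose pointwise bound is only $O(v^{-1})$. The paper handles this precisely by inserting $\E|\Gamma_k|^4+|\Im(n^{-1}\E\tr\bbA_k^{-1}(z))|^4$, so (\ref{b18}) at eighth order already consumes a fluctuation bound of the type (\ref{a39}). In your scheme, where (\ref{a39}) is proved after (ii) and its proof uses (ii), the dependence as written is circular at eighth order; it is repaired by the standard bootstrap in the moment order (second moments of $\Gamma_k$ from first-moment trace bounds, then fourth, then eighth and sixteenth), and your argument survives this, but the induction should be stated explicitly rather than left implicit.
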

\begin{remark}
Checking the argument of Lemma \ref{lem1} indicates
that all above estimates involving $\bbA_k^{-1}(z)$ (and $\underline{\bbA}_k^{-1}(z)$) still hold if replacing $\bbA_k^{-1}(z)$ (and
$\underline{\bbA}_k^{-1}(z)$) by $\bbA_{kj}^{-1}(z)$ (and
$\underline{\bbA}_{kj}^{-1}(z)$) respectively.
\end{remark}

\begin{proof} As pointed out in (6.1) in \cite{ker}, we obtain
 \begin{equation}\label{b15}
|\underline{m}_n^0(z)|\leq M,\quad |m_n^0(z)|\leq M.
\end{equation}
It was proved in \cite{g2} that
 \begin{equation}\label{b16}
|\E m_n(z)|\leq M, \quad  |\E\underline{m}_n(z)|\leq M,\quad  |b_1(z)|\leq M.
\end{equation}
See Lemma 6.2 of \cite {g2} for the first estimate of (\ref{a39}) and Cauchy's theorem ensures the second estimate of (\ref{a39}) via the first estimate of (\ref{a39}).

Write
\begin{equation}\label{b17}
\beta_1^{\tr}=b_1-\beta_1^{\tr}b_1\Gamma_1=b_1-b_1^2\Gamma_1+\beta_1^{\tr}b^2_1\Gamma_1^2.
\end{equation}
We then conclude from (\ref{h25}), (\ref{b17}) and Lemma 6.2 of \cite {g2} that
\begin{equation}
\E|\beta_1^{\tr}|^8\leq
M(1+v^{-8}E|\Gamma_1|^{16}) \leq M.\label{g15}
\end{equation}
Expand $\beta_1(z)$ as
\begin{equation}\label{g30}
\beta_1=\beta_1^{\tr}-\beta_1^{\tr}\beta_1\eta_1=\beta_1^{\tr}-(\beta_1^{\tr})^2\eta_1+(\beta_1^{\tr})^2\beta_1\eta^2_1.
\end{equation}
It follows from (\ref{g15}), (\ref{f44}) and Lemma \ref{lem8}
that
$$
\E|\beta_1(z)|^8\leq
M+ME|(\beta_1^{\tr})^2\eta_1|^8+ M v^{-8}\E|\eta_1(z)\beta_1^{\tr}(z)|^{16}\leq M.
$$

From (\ref{f10}) and (\ref{b16}) we have
\begin{equation}\label{c3}|n^{-1}\E\tr\bbA_k^{-1}(z)|\leq M.
\end{equation}
As for (\ref{b18}) by Lemma \ref{lem8}, (\ref{f44}), (\ref{a39}) and (\ref{c3}) we then obtain
$$
\E\big|\centre^k\big(\bbs_k^T\bbA_k^{-m_1}(z)\underline{\bbA}_k^{-m_2}(z)\bbA_k^{-m_3}(z)\bbs_k\big)\big|^8
$$$$\leq M n^{-4}\E( n^{-1}\tr\bbA_k^{-m_1}(z)\underline{\bbA}_k^{-m_2}(z)\bbA_k^{-m_3}(z)\bbA_k^{-m_3}(\bar z)\underline{\bbA}_k^{-m_2}(\bar z)\bbA_k^{-m_1}(z))^4
$$
$$
\leq M n^{-4}v^{-8m_1-8m_2-8m_3+4}\big(\E|\Gamma_k|^4+|\Im(n^{-1}
\E\tr\bbA_k^{-1}(z))|^4\big)\leq  M n^{-4}v^{-8m_1-8m_2-8m_3+4},
$$
where $\bbA_k^{-1}(\bar z)$ denotes the complex conjugate of
$\bbA_k^{-1}(z)$ and we also use the fact that
$$
n^{-1}\E\tr\bbA_k^{-1}(z)\bbA_k^{-1}(\bar{z})=v^{-1}\Im( n^{-1}\E\tr\bbA_k^{-1}(z)).
$$
This, together with (\ref{a39}), yields the estimate of $\xi_1(z)$.

Via (\ref{m45}) we have
\begin{equation}\label{m46}
|z+z\underline{m}_n^0(z)|^{-1}=|z+zc_nm_n^0(z)-1+c_n|^{-1}=|m_n^0(z)|\leq M
\end{equation}
which implies that $\big(z+(1-n^{-1})z\underline{m}_n^0(z)\big)^{-1}$ is bounded. By (\ref{m46}), (\ref{g36}) and the equality
$$
u_n(z)-\big(z+(1-n^{-1})z\underline{m}_n^0(z)\big)^{-1}=(1-n^{-1})u_n(z)(b_{12}(z)+z\underline{m}_n^0(z))\big(z+(1-n^{-1})z\underline{m}_n^0(z)\big)^{-1}
$$
we obtain
$$
|u_n(z)|\leq M(1-n^{-1}v^{-3/2})^{-1}|z+(1-n^{-1})z\underline{m}_n^0(z)|^{-1}\leq M.
$$
This implies (\ref{m44}).
\end{proof}

\begin{lemma}\label{lem7} Assume that $v\geq M_3/\sqrt{n}$ with $M_3$ being sufficiently large and $u\in [a,b]$. Then
\begin{equation}
n^{-2}\E\big|\tr\E_k\bbA_k^{-1}(z_1)
\underline{\bbA}_k^{-1}(z_2)-\E\tr\bbA_k^{-1}(z_1)
\underline{\bbA}_k^{-1}(z_2)\big|^2  \leq
  M/(n^2v^5),
\label{f5}
\end{equation}
\begin{equation}\label{f5a}
n^{-2}\E\big|\tr\E_k\bbA_k^{-1}(z)\underline{\bbA}_k^{-2}(z)-\E\tr\bbA_k^{-1}(z)\underline{\bbA}_k^{-2}(z)\big|^2
\leq M/(n^2v^7),
\end{equation}
\begin{equation}\label{f5b}
n^{-2}\E\big|\tr\bbA_{k}^{-2}(z)\underline{\bbA}_{k}^{-2}(z)-\E\tr\bbA_{k}^{-2}(z)\underline{\bbA}_{k}^{-2}(z)\big|^2
\leq M/(n^2v^9).
\end{equation}
and
\begin{equation}\label{g27}
|g(z)|^{-1}
\E|\Gamma_1|^8\leq M/(n^8v^{12}).
\end{equation}
\end{lemma}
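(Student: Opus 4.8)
The plan is to establish the three variance-type bounds (\ref{f5})--(\ref{f5b}) by one martingale-difference argument, and to deduce (\ref{g27}) by re-running the proof of the first estimate in (\ref{a39}) while keeping track of the extra factor $g(z)^{-1}$ through Lemma \ref{lem11a}.

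For (\ref{f5}), since neither $\bbA_k^{-1}(z_1)$ nor $\underline{\bbA}_k^{-1}(z_2)$ involves $\bbs_k$, with $W=\tr\big(\bbA_k^{-1}(z_1)\underline{\bbA}_k^{-1}(z_2)\big)$ we have $\E_kW=\E_{k-1}W$, so $\E_kW-\E W=\sum_{j=1}^{k-1}(\E_j-\E_{j-1})W$ is a sum of martingale differences and, by their orthogonality, $\E|\E_kW-\E W|^2=\sum_{j=1}^{k-1}\E|(\E_j-\E_{j-1})W|^2$. For fixed $j<k$ we subtract the $\bbs_j$-free quantity $\tr\big(\bbA_{kj}^{-1}(z_1)\underline{\bbA}_{kj}^{-1}(z_2)\big)$, which is annihilated by $\E_j-\E_{j-1}$, and expand the two rank-one corrections supplied by (\ref{f6}); the difference becomes a bounded sum of terms of the form $\beta\cdot\bbs_j^T\bbA_{kj}^{-m_1}(z_1)\underline{\bbA}_{kj}^{-m_2}(z_2)\bbA_{kj}^{-m_3}(z_1)\bbs_j$ (and products of two such quadratic forms), where $\beta$ stands for a product of one or two of $\beta_{kj}(z_1),\underline{\beta}_{kj}(z_2)$ and the matrix sandwiched between the $\bbs_j$'s is $\bbs_j$-free. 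Expanding $\beta$ by (\ref{f13}), writing each quadratic form as $\centre^j(\,\cdot\,)+n^{-1}\tr(\,\cdot\,)$ and each such trace as $\E n^{-1}\tr(\,\cdot\,)+$ fluctuation, the piece built from the deterministic $b_{12}$ and $\E n^{-1}\tr(\,\cdot\,)$ is $\bbs_j$-free and drops out; every surviving summand then carries one of the small factors $\centre^j(\,\cdot\,)$, $\xi_{kj}$ or a centered trace. Estimating with Hölder's inequality, (\ref{b18}), the $\beta_k,\beta_k^{\tr},\xi_k,\Gamma_k$-moment bounds of Lemma \ref{lem1}, and (\ref{f15}) — and at each stage subtracting means rather than bounding by sup-norms — each such contribution is $\le M/(nv^5)$, and summing over the $\le n$ values of $j$ gives (\ref{f5}). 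Estimates (\ref{f5a}) and (\ref{f5b}) run along the same lines: each extra resolvent factor $\underline{\bbA}_k^{-1}(z_2)$ raises the power of $v^{-1}$ by one via (\ref{b18}); for (\ref{f5b}) there is no $\E_k$-reduction and one decomposes $\tr\big(\bbA_k^{-2}(z)\underline{\bbA}_k^{-2}(z)\big)-\E\tr(\,\cdot\,)$ directly along the vectors $\bbs_j$ ($j\ne k$) and their independent copies, each removal again being a rank-one update.

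For (\ref{g27}) we re-run the martingale proof of $\E|\Gamma_1|^8\le M/(n^8v^{12})$ (Lemma 6.2 of \cite{g2}) carrying the factor $g(z)^{-1}$. The point, already used in the proof of Lemma \ref{lem14}, is that Lemma \ref{lem11a} yields $|g(z)|^{-1}\E n^{-1}\tr\bbA^{-1}(z)\bbA^{-1}(\bar z)\le M/v$ and, together with (\ref{f10}) and Lemma \ref{lem1}, $|g(z)|^{-1}\E|n^{-1}\tr\bbA^{-1}(z)\bbA^{-1}(\bar z)|^{k}\le M/v^{k}$ for $k=2,4,8$, whence $|g(z)|^{-1}\E|\eta_1|^m\le M/(n^{m/2}v^{m/2})$ — the same order as the bound without $g(z)^{-1}$. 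Hence at every step of the \cite{g2} argument where a power of $\Im\E n^{-1}\tr\bbA^{-1}(z)$ (equivalently of $\E n^{-1}\tr\bbA^{-1}(z)\bbA^{-1}(\bar z)$) or a moment of $\eta_1$ is bounded, dividing through by $|g(z)|$ costs nothing; since $M/(n^8v^{12})$ is obtained precisely by estimating those quantities with $\E n^{-1}\tr\bbA^{-1}(z)\bbA^{-1}(\bar z)\le M/v$ rather than the sharper $\le M|g(z)|/v$, the same chain of inequalities now produces $|g(z)|^{-1}\E|\Gamma_1|^8\le M/(n^8v^{12})$; the passage from $\bbA^{-1}(z)$ to $\bbA_1^{-1}(z)$ is absorbed by (\ref{f10}).

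The routine but delicate part is the bookkeeping. In (\ref{f5})--(\ref{f5b}) one must keep subtracting means of the ``$n^{-1}\tr$''-remainders, since merely bounding them by their sup-norms leaves individual summands at order $v^{-6}$ or worse, short of the claimed $v^{-5},v^{-7},v^{-9}$. The main obstacle is (\ref{g27}): one must verify that in the \cite{g2} proof every appearance of $|g(z)|$'s ``partner'' quantity $\E n^{-1}\tr\bbA^{-1}(z)\bbA^{-1}(\bar z)$ genuinely absorbs the $g(z)^{-1}$ — so the gain of Lemma \ref{lem11a} is never squandered — and this is exactly where the restriction $v\ge M_3/\sqrt n$ and a careful use of the expansion (\ref{b22}) of $\beta_1$ are needed.
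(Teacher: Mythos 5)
Your route is essentially the paper's: for (\ref{f5})--(\ref{f5b}) a martingale decomposition in $j$, removal of $\bbs_j$ via the rank-one identities (\ref{f6}), expansion of the $\beta$'s by (\ref{f13}), and centering of the quadratic forms and traces so that only terms carrying $\centre^j(\cdot)$, $\xi_{kj}$ or trace fluctuations survive, estimated through Lemma \ref{lem8}, Lemma \ref{lem1}, (\ref{f7}), (\ref{f15}) and the averaged bound (\ref{g8}); and for (\ref{g27}) the same key mechanism the paper isolates in (\ref{g29}), namely that Lemma \ref{lem11a} lets $|g(z)|^{-1}$ be absorbed by $\E n^{-1}\tr\bbA^{-1}(z)\bbA^{-1}(\bar z)$ and by the moments of $\eta_1$, so the eighth-moment martingale estimate for $\Gamma$ goes through unchanged. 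Three points of difference deserve attention. First, you telescope only over $j<k$, reading $\E_k$ as applied to the whole product so the statistic is $\mathcal{F}_{k-1}$-measurable; the paper decomposes over all $j\neq k$ because what its proof actually controls is the version with $\E_k$ acting only on the $z_2$-resolvent and, per the Remark following the lemma, the version with $\E_k$ removed altogether, which is what is used downstream (e.g. for $\tr\hat{\bbA}^{-1}_{kjk}(z_1,z_2)-\E\tr\hat{\bbA}^{-1}_{kjk}(z_1,z_2)$ around (\ref{m48})--(\ref{h26})). Your conditional bound follows from the unconditioned one by Jensen but not conversely, so to make the lemma serve its applications you must also treat the $j>k$ increments (in the paper these are handled by the same expansions, exploiting that $\E_k$ has already integrated out $\bbs_j$ in one factor). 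Second, the paper does not redo the martingale computation for (\ref{f5a}); it deduces it from (\ref{f5}) by Cauchy's integral formula on a circle of radius $v/2$, as in (\ref{g19}), which is cheaper than your direct argument; also note the variance bound worsens by $v^{-2}$, not $v^{-1}$, per extra resolvent ($v^{-5}\to v^{-7}\to v^{-9}$). Third, for (\ref{g27}) the paper argues self-containedly rather than re-running \cite{g2}: expanding $\beta_k$ by (\ref{b22}), the term involving $\beta_k\Gamma_k\bbs_k^T\bbA_k^{-2}(z)\bbs_k$ reproduces $|g(z)|^{-1}\E|\Gamma|^8$ on the right-hand side with coefficient $Mn^{-4}v^{-8}$, and one solves the resulting self-bounding inequality; this is exactly where $v\geq M_3/\sqrt n$ enters, a step your sketch gestures at but should make explicit.
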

\begin{remark}
Checking on the argument of (\ref{f5}) shows that (\ref{f5}) is still true when the notation $\E_k$ is removed.
\end{remark}

\begin{proof} We begin with a martingale decomposition of the random variable of
interest:
$$
n^{-1}\tr\bbA_k^{-1}(z_1)
\E_k\bbA_k^{-1}(z_2)-\E\big(n^{-1}\tr\bbA_k^{-1}(z_1)
\E_k \bbA_k^{-1}(z_2)\big)
$$
$$
=n^{-1}\sum\limits_{j\neq
k}^{n}(\E_j-\E_{j-1})\big[\tr\bbA_k^{-1}(z_1)
\E_k \bbA_k^{-1}(z_2)\big]
$$
$$
=n^{-1}\sum\limits_{j\neq
k}^{n}(\E_j-\E_{j-1})\big[\tr\bbA_k^{-1}(z_1)
\E_k\bbA_{k}^{-1}(z_2)-\tr\bbA_{kj}^{-1}(z_1)
\E_k\bbA_{kj}^{-1}(z_2)\big]
$$
$$
=n^{-1}\sum\limits_{j\neq
k}^{n}(\E_j-\E_{j-1})(\delta_1+\delta_2+\delta_3),
$$
where, via (\ref{f6}),
$$
\delta_1=\beta_{kj}(z_1)\bbs_j^T\bbA_{kj}^{-1}(z_1)
\E_k\big(\beta_{kj}(z_2)\bbA_{kj}^{-1}(z_2)\bbs_j\bbs_j^T\bbA_{kj}^{-1}(z_2)\big)\bbA_{kj}^{-1}(z_1)\bbs_j
$$
$$
\delta_2=-\beta_{kj}(z_1)\bbs_j^T\bbA_{kj}^{-1}(z_1)
\E_k\big(\bbA_{kj}^{-1}(z_2)\big)\bbA_{kj}^{-1}(z_1)\bbs_j
$$
and
$$
\delta_3=-\tr\bbA_{kj}^{-1}(z_1)
\E_k\big(\beta_{kj}(z_2)\bbA_{kj}^{-1}(z_2)\bbs_j\bbs_j^T\bbA_{kj}^{-1}(z_2)\big).
$$

It follows from (\ref{f7}) that
\begin{equation}\label{f12}
|\delta_1|\leq v^{-2}.
\end{equation}
This implies that when $j>k$,
$$
(\E_j-\E_{j-1})\delta_1=(\E_j-\E_{j-1})b_{12}(z_1)(\delta_{11}-\delta_{12}),
$$
where $\delta_{12}=\xi_{kj}(z_1)\delta_1$ and
$$
\delta_{11}=\bbs_j^T\bbA_{kj}^{-1}(z_1)
\E_k\big(\beta_{kj}(z_2)\bbG_k(z_2)\big)\bbA_{kj}^{-1}(z_1)\bbs_j
-n^{-1}\tr\bbA_{kj}^{-1}(z_1)
\E_k\big(\beta_{kj}(z_2)\bbG_k(z_2)\big)\bbA_{kj}^{-1}(z_1)
$$
 with
$\bbG_k(z_2)=\bbA_{kj}^{-1}(z_2)\bbs_j\bbs_j^T\bbA_{kj}^{-1}(z_2)$.
We conclude from (\ref{f7}), (\ref{f12}), (\ref{f15}) and Lemma \ref{lem8} that
$$
\E| n^{-1}\sum\limits_{j\neq
k}^{n}(\E_j-\E_{j-1})(\delta_{11}+\delta_{12})|^2\leq
n^{-2}\sum\limits_{j\neq
k}^{n}\E|\delta_{11}|^2+\E|\delta_{12}|^2\leq M/(n^2v^5).
$$

For handling the case $j<k$, let
$$\alpha_{k1}=\bbs_j^T\bbA_{kj}^{-1}(z_1)\underline{\bbA}_{kj}^{-1}(z_2)\bbs_j,\quad
\zeta_{kj1}=\alpha_{k1}-n^{-1}\tr\bbA_{kj}^{-1}(z_1)
\underline{\bbA}_{kj}^{-1}(z_2).
$$  Applying (\ref{f13}) and
the equality for $\underline{\beta}_{kj}(z_2)$ similar to
(\ref{f13}) yields
$$
(\E_j-\E_{j-1})\delta_1=(\E_j-\E_{j-1})\big[\beta_{kj}(z_1)\underline{\beta}_{kj}(z_2)\alpha_{k1}^2
\big]
$$
$$
=b_{12}(z_1)b_{12}(z_2)[\delta_{13}+2\delta_{14}+\delta_{15}+\delta_{16}+\delta_{17}],
$$
where
$$
\delta_{13}=(\E_j-\E_{j-1})\big(\zeta_{kj1}^2\big),
\delta_{14}=(\E_j-\E_{j-1})\big(\zeta_{kj1}n^{-1}\tr\bbA_{kj}^{-1}(z_1)
\underline{\bbA}_{kj}^{-1}(z_2)\big),
$$
$$
\delta_{15}=-(\E_j-\E_{j-1})\big[\beta_{kj}(z_1)\xi_{kj}(z_1)\alpha_{k1}^2\big],\
\delta_{16}=-(\E_j-\E_{j-1})\big[\underline{\beta}_{kj}(z_2)\underline{\xi}_{kj}(z_2)\alpha_{k1}^2\big]
$$
and
$$
\delta_{17}=(\E_j-\E_{j-1})\big[\beta_{kj}(z_1)\underline{\beta}_{kj}(z_2)\xi_{kj}(z_1)\underline{\xi}_{kj}(z_2)\alpha_{k1}^2\big].
$$

It follows from Lemma \ref{lem1} that
\begin{equation}\label{f16}
\E|\zeta_{kj1}|^4\leq
M/(n^2v^6).
\end{equation}
In view of (\ref{f16}) and (\ref{f3*}),
$$
\E|n^{-1}\sum\limits_{j\neq
k}^{n}(\E_j-\E_{j-1})b_{12}(z_1)b_{12}(z_2)(\delta_{13})|^2\leq
M/(n^3v^6).
$$
While (\ref{g8}) and (\ref{f3*}) yield
 $$
\E| n^{-1}\sum\limits_{j\neq
k}^{n}(\E_j-\E_{j-1})b_{12}(z_1)b_{12}(z_2)(\delta_{14})|^2\leq
 M/(n^2v^5).
$$
It follows from (\ref{f7}) that
$$
|\beta_{kj}(z_1)\alpha_{k1}\alpha_{k2}|\leq Mv^{-1}\|\underline{\bbA}_{kj}^{-1}(z_2)\bbs_j\|^2=Mv^{-1}\bbs_j^T\underline{\bbA}_{kj}^{-1}(\bar z_2)
\underline{\bbA}_{kj}^{-1}(z_2)\bbs_j.
$$
This, together with estimates similar to (\ref{g8}) and (\ref{f16}), ensures that
$$
\E|n^{-1}\sum\limits_{j\neq
k}^{n}(\E_j-\E_{j-1})b_{12}(z_1)b_{12}(z_2)(\delta_{15})|^2
\leq M/(n^2v^5).
$$
Obviously, this estimate applies to the term involving
$\delta_{16}$. From (\ref{f7}) and Lemma \ref{lem1} we obtain
$$
\E| n^{-1}\sum\limits_{j\neq
k}^{n}(\E_j-\E_{j-1})b_{12}(z_1)b_{12}(z_2)(\delta_{17})|^2\leq M/(n^3v^6).
$$
Summarizing the above we have
$$
\E|n^{-1}\sum\limits_{j\neq
k}^{n}(\E_j-\E_{j-1})b_{12}(z_1)b_{12}(z_2)(\delta_{1})|^2\leq M/(n^2v^5).
$$

Applying an argument similar to that for $\delta_1$ in the case of $j>k$ one may prove that
$$
\E| n^{-1}\sum\limits_{j\neq
k}^{n}(\E_j-\E_{j-1})b_{12}(z_1)b_{12}(z_2)(\delta_{2})|^2\leq M/(n^2v^5).
$$
When $j>k$
$$
n^{-1}\sum\limits_{j\neq
k}^{n}(\E_j-\E_{j-1})b_{12}(z_1)b_{12}(z_2)(\delta_{3})=0.
$$
When $j<k$, as in dealing with $\delta_1$ in the case of $j>k$ one may verify that
$$
\E| n^{-1}\sum\limits_{j\neq
k}^{n}(\E_j-\E_{j-1})b_{12}(z_1)b_{12}(z_2)(\delta_{3})|^2\leq M/(n^2v^5).
$$
Thus, the proof of (\ref{f5}) is complete.

As in (\ref{g19}), by Cauchy's theorem one may verify (\ref{f5a}). Following the proof of (\ref{f5}) one can prove (\ref{f5b}) and the details are omitted here.

Consider (\ref{g27}) next. Thanks to (\ref{f10}), it is enough to consider $\tr\bbA^{-1}(z)-\E\tr\bbA^{-1}(z)$ rather than $\Gamma_1$.  As in (\ref{g39}) write
$$
\tr\bbA^{-1}(z)-\E\tr\bbA^{-1}(z)=-\sum\limits_{k=1}^n (\E_k-\E_{k-1} )(\beta_k\bbs_k^T\bbA_k^{-2}(z)\bbs_k)
$$
$$
=-\sum\limits_{k=1}^n (\E_k-\E_{k-1} )(b_1\eta_k^{(2)})+\sum\limits_{k=1}^n (\E_k-\E_{k-1} )
(\beta_kb_1\xi_k\bbs_k^T\bbA_k^{-2}\bbs_k),
$$
where the last step uses (\ref{b22}). It follows from (\ref{g29}), Lemmas \ref{lem8}, \ref{lem1} and Burkholder's inequality that
$$
n^{-8}|g(z)|^{-1}\E\big|\sum\limits_{k=1}^n (\E_k-\E_{k-1} )(\eta_k^{(2)})\big|^8\leq n^{-12}v^{-8}|g(z)|^{-1}\E|M\tr\bbA_1^{-1}(z)\bbA_1^{-1}(\bar z)|^4\leq M/(n^8v^{12}).
$$
Similarly, via (\ref{m17}) we obtain
$$
n^{-8}|g(z)|^{-1}\E\big|\sum\limits_{k=1}^n (\E_k-\E_{k-1} )(\beta_k\eta_k\bbs_k^T\bbA_k^{-2}\bbs_k)\big|^8
\leq M/(n^8v^{12})
$$
and via (\ref{b23}), (\ref{m17}) and Lemma \ref{lem11a}
$$
n^{-8}|g(z)|^{-1}\E\big|\sum\limits_{k=1}^n (\E_k-\E_{k-1} )(\beta_k\Gamma_k\bbs_k^T\bbA_k^{-2}\bbs_k)\big|^8\leq Mn^{-4}v^{-8}|g(z)|^{-1}E|\Gamma_1|^8
$$$$\leq Mn^{-4}v^{-8}|g(z)|^{-1}\E|n^{-1}\tr\bbA^{-1}(z)-n^{-1}\E\tr\bbA^{-1}(z)|^8+M/(n^8v^{12}).
$$
Summarizing the above we have
$$
(1-Mn^{-4}v^{-8})\E|n^{-1}\tr\bbA^{-1}(z)-n^{-1}\E\tr\bbA^{-1}(z)|^8\leq M/(n^8v^{12}),
$$
which implies (\ref{g27}).
\end{proof}

\section{Appendix 2} \label{appen2}

The aim in this section is to develop the asymptotic means and variances in Theorem \ref{theo1} and Theorem \ref{theo2}. Consider (\ref{f62}) first. Note that
\begin{eqnarray*}
(\ref{f62})
&=&-\frac{1}{2h^2\pi^2}\oint_{\mathcal{C}_1}\oint_{\mathcal{C}_2}
K'(\frac{x_1-z_1}{h})K'(\frac{x_2-z_2}{h})\label{e1}\\
&&\qquad\quad\times\big[\ln\big|\underline{m}_n^0(z_1)-\underline{m}_n^0(z_2)\big|+i
\arg(\underline{m}_n^0(z_1)-\underline{m}_n^0(z_2))\big]dz_1dz_2,
\end{eqnarray*}
where the contours $\mathcal{C}_1$ and $\mathcal{C}_2$ are two
rectangles defined in (\ref{a3*}) and (\ref{g53}), respectively.

As in Section 5 of \cite{b2} one may prove that
\begin{equation}\label{f58}
\inf\limits_{z\in S,n}|\underline{m}_n^0(z)|>0,\quad
\Big|\frac{\underline{m}_n^0(z_1)-\underline{m}_n^0(z_2)}{z_1-z_2}\Big|\geq\frac{1}{2}|\underline{m}_n^0(z_1)\underline{m}_n^0(z_1)|,
\end{equation}
where $S$ is any bounded subset of $\mathbb{C}$.

To facilitate statements, denote the real parts of $z_j$ by
$u_j$,$j=1,2$. In what follows, let $n\rightarrow\infty$ first and
then $v_0\rightarrow 0$. Then, as argued in \cite{b2}, the integrals
in (\ref{e1}) involving the arg term and the vertical sides approach
zero.

Define
$$
K^{(1)}_{ri}=K_r'(\frac{x_1-z_1}{h})K_r'(\frac{x_2-z_2}{h})-K_i'(\frac{x_1-z_1}{h})K_i'(\frac{x_2-z_2}{h}),
$$
$$
K^{(2)}_{ri}=K_r'(\frac{x_1-z_1}{h})K_r'(\frac{x_2-z_2}{h})+K_i'(\frac{x_1-z_1}{h})K_i'(\frac{x_2-z_2}{h}).
$$
 Therefore it is enough to investigate the following integrals
$$
-\frac{1}{h^2\pi^2}\int_{a_l}^{a_r}\int_{a_l-\varepsilon}
^{a_r+\varepsilon}
[K^{(1)}_{ri}\ln|\underline{m}_n^0(z_1)-\underline{m}_n^0(z_2)|\non
-K^{(2)}_{ri}\ln|\underline{m}_n^0(z_1)-\overline{\underline{m}_n^0}(z_2)|]du_1du_2\non
$$
\begin{eqnarray}
&&=\frac{1}{h^2\pi^2}\int_{a_l}^{a_r}\int_{a_l-\varepsilon}
^{a_r+\varepsilon}
(K_r'(\frac{x_1-z_1}{h})K_r'(\frac{x_2-z_2}{h})\ln\Big|\frac{\underline{m}_n^0(z_1)-\overline{\underline{m}_n^0}(z_2)}{\underline{m}_n^0(z_1)-\underline{m}_n^0(z_2)}\Big|du_1du_2
\label{e4}\\
&&+\frac{1}{h^2\pi^2}\int_{a_l}^{a_r}\int_{a_l-\varepsilon}
^{a_r+\varepsilon}
(K_i'(\frac{x_1-z_1}{h})K_i'(\frac{x_2-z_2}{h})\label{e2}\\
&&\qquad\qquad\times\ln\Big|(\underline{m}_n^0(z_1)-\underline{m}_n^0(z_2))(\underline{m}_n^0(z_1)-\overline{\underline{m}_n^0}(z_2))\Big|du_1du_2\nonumber
,\end{eqnarray} where $K_r'(h^{-1}(x-z))$ and
$K_i'(h^{-1}(x-z))$, respectively, represent the real part and
imaginary part of $K'(h^{-1}(x-z))$,
$\overline{\underline{m}_n^0}(z)$ stands for the complex conjugate
of $\underline{m}_n^0(z)$.

We develop the limit of (\ref{e4}) and (\ref{e2}) below. To this
end, we list some facts below.  By (\ref{a25}) and (\ref{a26}) one
may verify that
\begin{equation}\label{a13*}
\int_{-\infty}^{+\infty} \int_{-\infty}^{+\infty}\Big|K'(u_1)
K'(u_2)\ln (u_1-u_2)^2\Big|du_1du_2<\infty.
\end{equation}
In addition, it follows from (\ref{a25}) that
$$
(\ln h^{-2})\int_{\frac{x-b}{h}}^{\frac{x-a}{h}}
K_r'(u_1)\int_{\frac{x-b-\varepsilon}{h}}
^{\frac{x-a+\varepsilon}{h}} K_r'(u_2)du_1du_2\rightarrow 0.
$$
This, together with (\ref{a13*}), implies that as
$n\rightarrow\infty$
\begin{eqnarray}
&&\frac{1}{h^2}\int_{x_1-a_r}^{x_1-a_l}\int_{x_2-a_r-\varepsilon}
^{x_2-a_l+\varepsilon} K'(\frac{u_1}{h})K'(\frac{u_2}{h})\ln
(u_1-u_2)^2du_1du_2\non
&=&\int_{\frac{x_1-a_r}{h}}^{\frac{x_1-a_l}{h}}\int_{\frac{x_2-a_r-\varepsilon}{h}}
^{\frac{x_2-a_l+\varepsilon}{h}} K'(u_1)K'(u_2)\Big[\ln
(u_1-u_2)^2-\ln \frac{1}{h^2}\Big]du_1du_2 \non
&\rightarrow&\int_{-\infty}^{+\infty}
\int_{-\infty}^{+\infty}K'(u_1) K'(u_2)\ln
(u_1-u_2)^2du_1du_2.\label{f75}
\end{eqnarray}
By (\ref{f63}) and the continuity property of $K''(u+iv_0)$ and
$K'(u+iv_0)$ in $u$ and $v_0$ it is not difficult to prove that
\begin{equation}
\label{f60}
\lim\limits_{v_0\rightarrow0}\int^{+\infty}_{-\infty}|K''(u+iv_0)|du=\int^{+\infty}_{-\infty}|K''(u)|du
\end{equation}
and
\begin{equation}\label{f72}
\lim\limits_{v_0\rightarrow0}\int^{+\infty}_{-\infty}K^{(j)}(u+iv_0)du=\int^{+\infty}_{-\infty}K^{(j)}(u)du,\ j=0,1,
\end{equation}
where $K^{(j)}$ is the $j$-th derivative of $K$.

By complex Roller's theorem
\begin{equation}\label{e5}
K_i'(\frac{x-z_1}{h})=K_i'(\frac{x-u_1}{h}+iv_0)=vK_r''(\frac{x-u}{h}+iv_1)
\end{equation}
because $K_i'(h^{-1}(x-z))=0$, where $v_1$ lies in $(0,v_0))$.
Thus we conclude from (\ref{f58}) and (\ref{f60}) that
$$
|\frac{1}{h}\int_{a_l}^{a_r}
(K_i'(\frac{x_1-z_1}{h})\ln\Big|(\underline{m}_n^0(z_1)-\underline{m}_n^0(z_2))(\underline{m}_n^0(z_1)-\overline{\underline{m}_n^0}(z_2))\Big|du_1|
$$
$$
\leq v_0h\ln
(v_0^{-1}h)\frac{1}{h}\int_{a}^b|K''(\frac{x-u}{h}+iv_1)|du_1\rightarrow
0,
$$
as $n\rightarrow\infty,\ v_0\rightarrow 0$, which implies that
(\ref{e2}) converges to zero.

Consider (\ref{e4}) next. As in (\ref{m2*}) from (\ref{h31}) one may solve
$$
\underline{m}(z)=\frac{-(z+1-c)+\sqrt{(z+1-c)^2-4z}}{2z}.
$$
Note that the above equality still holds when $c$ and $\underline{m}(z)$ are, respectively replaced by $c_n$ and $\underline{m}_n^0(z)$. Also, when $z\rightarrow x\in [a,b]$ we have
\begin{equation}\label{g31}
\underline{m}(x)=\frac{-(x+1-c)+i\sqrt{(x-a)(b-x)}}{2x}.
\end{equation}
It follows that for $u\in
[(x-b)/h,(x-a)/h]$, as $n\rightarrow \infty$,
\begin{equation}\label{f66}
|\underline{m}_n^0(z_n)-\underline{m}(u_n)|\rightarrow 0,
\end{equation}
where $z_n=u_n-iv_0h$ with $u_n=x-uh$.

Now, as in \cite{b2}, for (\ref{e4}) write
\begin{equation}\label{e8}
\ln\Big|\frac{\underline{m}_n^0(z_1)-\overline{\underline{m}_n^0}(z_2)}{\underline{m}_n^0(z_1)-\underline{m}_n^0(z_2)}\Big|
=\frac{1}{2}\ln\Big(1+\frac{4\underline{m}_{ni}^0(z_1)\underline{m}_{ni}^0(z_2)}{|\underline{m}_n^0(z_1)-\underline{m}_n^0(z_2)|^2}\Big),
\end{equation}
where $\underline{m}_{ni}^0(z)$ denotes the imaginary part of
$\underline{m}_{n}^0(z)$. By (\ref{f58})
\begin{equation}\label{f74}
\ln\Big(1+\frac{4\underline{m}_{ni}^0(z_1)\underline{m}_{ni}^0(z_2)}{|\underline{m}_n^0(z_1)-\underline{m}_n^0(z_2)|^2}\Big)\leq
\ln\Big(1+\frac{16\underline{m}_{ni}^0(z_1)\underline{m}_{ni}^0(z_2)}{(u_1-u_2)^2|\underline{m}_n^0(z_1)\underline{m}_n^0(z_2)|^2}\Big).
\end{equation}
In view of (\ref{f58}) and Lemma \ref{lem1}
\begin{equation}\label{f73}
\sup\limits_{u_1,u_2\in
[a,b],v_1,v_2\in[v_0h,1]}\Big|\frac{\underline{m}_{ni}^0(z_1)\underline{m}_{ni}^0(z_2)}{|\underline{m}_n^0(z_1)\underline{m}_n^0(z_2)|^2}\Big|<\infty.
\end{equation}
By the generalized dominated convergence theorem we then conclude
from (\ref{f75}), (\ref{f72}), (\ref{f66}), (\ref{f74}), (\ref{f73})
that as $n\rightarrow\infty$
$$\int_{\frac{x_1-a_r}{h}}^{\frac{x_1-a_l}{h}}\int_{\frac{x_2-a_r-\varepsilon}{h}}
^{\frac{x_2-a_l+\varepsilon}{h}}K_r'(z_1)
K_r'(z_2)\Big[\ln\Big|\frac{\underline{m}_n^0(u_{n1}-iv_0h)-\overline{\underline{m}_n^0}(u_{n2}-iv_0h/2)}
{\underline{m}_n^0(u_{n1}-iv_0h)-\underline{m}_n^0(u_{n2}-iv_0h/2)}\Big|$$$$-\ln\Big|\frac{\underline{m}(u_{n1})-\overline{\underline{m}}(u_{n2})}
{\underline{m}(u_{n1})-\underline{m}(u_{n2})}\Big|\Big]du_1du_2\longrightarrow
0,
$$
where $u_{nj}=x_j-u_jh$, $j=1,2$. In addition, it follows from
(\ref{f75}), (\ref{f72}), and inequalities similar to (\ref{f74})
and (\ref{f73}) that as $n\rightarrow\infty$ and then
$v_0\rightarrow0$
\begin{eqnarray*}
\int_{\frac{x_1-a_r}{h}}^{\frac{x_1-a_l}{h}}\int_{\frac{x_2-a_r-\varepsilon}{h}}
^{\frac{x_2-a_l+\varepsilon}{h}}(K_r'(z_1) K_r'(z_2)-K_r'(u_1)
K_r'(u_2))\ln\Big|\frac{\underline{m}(u_{n1})-\overline{\underline{m}}(u_{n2})}
{\underline{m}(u_{n1})-\underline{m}(u_{n2})}\Big|du_1du_2
\rightarrow
0.
\end{eqnarray*}
Therefore (\ref{e4}) can be reduced to the following
\begin{eqnarray}
\qquad \int_{\frac{x_1-a_r}{h}}^{\frac{x_1-a_l}{h}}\int_{\frac{x_2-a_r-\varepsilon}{h}}
^{\frac{x_2-a_l+\varepsilon}{h}}K'(u_1)
K'(u_2)\ln\Big|\frac{\underline{m}(u_{n1})-\overline{\underline{m}}(u_{n2})}
{\underline{m}(u_{n1})-\underline{m}(u_{n2})}\Big|du_1du_2+o(1),\label{f71}
\end{eqnarray}
which turns to be
$$\frac{1}{h^2}\int_{x_1-a_r}^{x_1-a_l}\int_{x_2-a_r-\varepsilon}
^{x_2-a_l+\varepsilon}
K'(\frac{u_1}{h})K'(\frac{u_2}{h})\ln\Big|\frac{\underline{m}(x_1-u_1)-\overline{\underline{m}}(x_2-u_2)}
{\underline{m}(x_1-u_1)-\underline{m}(x_2-u_2)}\Big|du_1du_2+o(1).
$$

To handle (\ref{f71}), we need one more lemma:
\begin{lemma}\label{lem4} Suppose that the function $g(x_1,x_2)$ is continuous in $x_1$ and $x_2$,
\begin{equation}\label{f64}
\int_{x_1-a_r}^{x_1-a_l}\int_{x_2-a_r}^{x_2-a_l}|g(x_1-u_1,x_2-u_2)|du_1du_2<\infty
\end{equation}
and
\begin{equation}\label{f64}
\int_{x_1-a_r}^{x_1-a_l}|g(x_1-u_1,x_2)|du_1<\infty,\quad
\int_{x_2-a_r}^{x_2-a_l}|g(x_1,x_2-u_2)|du_2<\infty.
\end{equation} Then, as $n\rightarrow\infty$
\begin{equation}\label{e7}
\frac{1}{h^2}\int_{x_1-a_r}^{x_1-a_l}\int_{x_2-a_r-\varepsilon}
^{x_2-a_l+\varepsilon}
K'(\frac{u_1}{h})K'(\frac{u_2}{h})g(x_1-u_1,x_2-u_2)du_1du_2\rightarrow
0,
\end{equation}
where $x_1\neq a_l, a_r$ and $x_2\neq a_l, a_r$.
\end{lemma}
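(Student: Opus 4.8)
The plan is to rescale and then apply dominated convergence. First I would change variables $u_i=ht_i$ in (\ref{e7}), turning its left-hand side into
$$
\int_{I_1(n)}\int_{I_2(n)}K'(t_1)K'(t_2)\,g(x_1-ht_1,x_2-ht_2)\,dt_1\,dt_2,
$$
where $I_1(n)=\big[(x_1-a_r)/h,(x_1-a_l)/h\big]$ and $I_2(n)=\big[(x_2-a_r-\varepsilon)/h,(x_2-a_l+\varepsilon)/h\big]$. Since $a_l<a<x_i<b<a_r$ we have $x_i-a_r<0<x_i-a_l$, so the intervals $I_1(n)$ and $I_2(n)$ increase to $\R$ as $h\to0$. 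Two elementary facts will be used repeatedly: $K'\in L^1(\R)$, because $|K'(t)|\le|tK'(t)|$ for $|t|\ge1$ gives $\int_{|t|\ge1}|K'|\le\int|tK'|<\infty$ by (\ref{a26}) while $K'$ is continuous on $[-1,1]$; and $\int_c^dK'(t)\,dt=K(d)-K(c)\to0$ as $c\to-\infty$ and $d\to+\infty$, by (\ref{a25}).

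If $g$ is bounded on the compact rectangle $[a_l,a_r]\times[a_l-\varepsilon,a_r+\varepsilon]$ — which holds automatically when $g$ is genuinely continuous there — then the proof is immediate: the integrand, extended by $0$ outside $I_1(n)\times I_2(n)$, converges pointwise to $K'(t_1)K'(t_2)g(x_1,x_2)$ by continuity of $g$, and is dominated, uniformly in $n$, by $\|g\|_\infty\,|K'(t_1)K'(t_2)|\in L^1(\R^2)$; hence the limit is $g(x_1,x_2)\big(\int_\R K'\big)^2=0$.

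The case where $g$ is continuous only off a thin set and is unbounded near the boundary of that rectangle — which is what occurs in the situations where the lemma is invoked — is the one I expect to be the main obstacle, since after rescaling $g$ is no longer uniformly in $L^1$ and the cancellation $\int K'=0$ must be retained. There I would decompose
$$
g(x_1-ht_1,x_2-ht_2)=g(x_1,x_2)+\big[g(x_1,x_2-ht_2)-g(x_1,x_2)\big]+\big[g(x_1-ht_1,x_2-ht_2)-g(x_1,x_2-ht_2)\big]
$$
and split the integral into three parts. The first part is $g(x_1,x_2)\big(\int_{I_1(n)}K'\big)\big(\int_{I_2(n)}K'\big)\to0$; the second factors as $\big(\int_{I_1(n)}K'\big)\cdot\int_{I_2(n)}K'(t_2)\big[g(x_1,x_2-ht_2)-g(x_1,x_2)\big]\,dt_2$, whose first factor tends to $0$ and whose second factor is bounded in $n$ because $g(x_1,\cdot)$ is continuous, hence bounded, near $x_2$ and $K'\in L^1$; and the third part is $\int_{I_1(n)}K'(t_1)\Phi_n(t_1)\,dt_1$ with $\Phi_n(t_1):=\int_{I_2(n)}K'(t_2)\big[g(x_1-ht_1,x_2-ht_2)-g(x_1,x_2-ht_2)\big]\,dt_2$, which tends to $0$ for each fixed $t_1$ (one-variable dominated convergence) and satisfies $\sup_{n,t_1}|\Phi_n(t_1)|<\infty$, so a last dominated convergence with dominating function a constant times $|K'(t_1)|$ yields $0$. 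The genuinely delicate point is precisely the uniform-in-$n$ bound on these innermost one-variable integrals: from $\int|g|<\infty$ alone, $\tfrac1h\int|K'(u/h)|\,|g(\cdot)|\,du$ need not stay bounded, so one must use that the singular set of $g$ keeps away from the slices $\{x_1\}\times\R$ and $\R\times\{x_2\}$, splitting the $u$-integration into a neighbourhood of $0$, where the kernel weight $\tfrac1h|K'(u/h)|$ is large but $g$ is bounded, and its complement, where $\tfrac1h|K'(u/h)|\le C|u|^{-1}$ is bounded and $|g|$ is integrable with any logarithmic singularity of uniformly controlled strength; this is the only place where hypotheses (\ref{f64}) are actually needed.
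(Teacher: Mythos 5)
Your reduction of the easy case (when $g$ is bounded on the whole rectangle) is fine, and you have correctly located where the real difficulty sits: the lemma is invoked for $g$'s that blow up (logarithmically) inside the integration rectangle, with only the double integrability and the two slice conditions in (\ref{f64}) available. But your treatment of that case has a genuine gap at exactly the step you flag. In your third term you need $\sup_{n,t_1}|\Phi_n(t_1)|<\infty$, where $\Phi_n(t_1)=\int_{I_2(n)}K'(t_2)\big[g(x_1-ht_1,x_2-ht_2)-g(x_1,x_2-ht_2)\big]dt_2$. As $t_1$ ranges over $I_1(n)$ the first argument $x_1-ht_1$ sweeps the whole interval $[a_l,a_r]$, so it can land on (or near) the singular set of $g$ precisely where the rescaled kernel $h^{-1}K'(u_2/h)$ concentrates: for the applications, $g(y,w)\sim\ln|y-w|^{-1}$ and the choice $x_1-ht_1=x_2$ gives $|\Phi_n(t_1)|\asymp\ln h^{-1}\to\infty$. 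The hypotheses (\ref{f64}) control only the two slices through $(x_1,\cdot)$ and $(\cdot,x_2)$, not slices through nearby first arguments, so the uniform bound neither follows from the statement nor holds for the intended $g$; your suggested patch ("singular set away from the slices", "logarithmic singularity of uniformly controlled strength") imports extra structural assumptions that are not part of the lemma. The failure also shows the fix cannot be a constant dominating function: the smallness of $|K'(t_1)|$ at the bad $t_1$ must be coupled with the inner integral, i.e.\ the two variables cannot be decoupled the way your iterated dominated-convergence scheme requires.

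The paper's proof avoids this issue by never asking for a uniform one-variable bound. It subtracts $g(x_1,x_2)$, splits the $(u_1,u_2)$-region by thresholds $\delta_1,\delta_2$ into a central box and three outer regions, and uses continuity of $g$ only on the central box ($I_1$). On every region where some $|u_i|>\delta_i$ it extracts a factor $\sup_{|u|>\delta_i/h}|uK'(u)|\rightarrow0$ from (\ref{a25}) — not merely the boundedness $h^{-1}|K'(u/h)|\le C|u|^{-1}$ that you use — and multiplies it against the finite double integral of $|g|$ (term $I_5$) or against the slice integrals in (\ref{f64}) for the mixed regions ($I_3$, $I_4$, where a dominated-convergence step in the single small variable is performed inside the double integral). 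The subtracted constant is then killed by $h^{-1}\int_{x_1-a_r}^{x_1-a_l}K'(u/h)du=K(u)\big|_{(x_1-a_r)/h}^{(x_1-a_l)/h}\rightarrow0$, which is the analogue of your $\int_{I}K'\to0$ observation. If you want to salvage your telescoping decomposition, you would have to treat the third term as a genuine double integral and reproduce this pairing of the vanishing tail weight $|uK'(u)|$ with (\ref{f64}), at which point you are essentially back to the paper's argument.
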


\proof
 Define the sets $G_1=(|u_1|\leq\delta_1)\cap (|u_2|>\delta_2)$, $G_2=(|u_1|>\delta_1)\cap (|u_2|\leq\delta_2)$ and $G_3=(|u_1|>\delta_1)\cap (|u_2|>\delta_2)$. Splitting the region of integration into the union of the sets $(|u_1|\leq\delta_1)\cap (|u_2|\leq\delta_2)$, $G_1$, $G_2$ and $G_3$ gives
$$\Big|\frac{1}{h^2}\int_{x_1-a_r}^{x_1-a_l}\int_{x_2-a_r-\varepsilon}
^{x_2-a_l+\varepsilon}
K'(\frac{u_1}{h})K'(\frac{u_2}{h})\Big[g(x_1-u_1,x_2-u_2)-g(x_1,x_2)\Big]du_1du_2\Big|$$
\begin{eqnarray}
\leq I_1+I_2+I_3+I_4+I_5\label{f70},
\end{eqnarray}
where
$$I_1= \sup\limits_{|u_1|\leq\delta_1,|u_2|\leq\delta_2}\Big|g(x_1-u_1,x_2-u_2)-g(x_1,x_2)\Big|\int_{-\infty}^{+\infty}
|K'(u)|du\Big|^2,
$$
$$I_2= |g(x_1,x_2)|\Big|\frac{1}{h^2}\int_{x_1-a_r}^{x_1-a_l}\int_{x_2-a_r-\varepsilon}
^{x_2-a_l+\varepsilon}I(G_1\cup G_2\cup G_3)
K'(\frac{u_1}{h})K'(\frac{u_2}{h})du_1du_2\Big|,
$$$$I_3=\Big|\frac{1}{h^2}\int_{x_1-a_r}^{x_1-a_l}\int_{x_2-a_r-\varepsilon}
^{x_2-a_l+\varepsilon}I(G_1)
K'(\frac{u_1}{h})K'(\frac{u_2}{h})g(x_1-u_1,x_2-u_2)du_1du_2\Big|,
$$
$$I_4=\Big|\frac{1}{h^2}\int_{x_1-a_r}^{x_1-a_l}\int_{x_2-a_r-\varepsilon}
^{x_2-a_l+\varepsilon}I(G_2)
K'(\frac{u_1}{h})K'(\frac{u_2}{h})g(x_1-u_1,x_2-u_2)du_1du_2\Big|
$$
and
$$I_5=\Big|\int_{x_1-a_r}^{x_1-a_l}\int_{x_2-a_r-\varepsilon}
^{x_2-a_l+\varepsilon}I(G_3)
\frac{u_1u_2}{h^2}K'(\frac{u_1}{h})K'(\frac{u_2}{h})\frac{g(x_1-u_1,x_2-u_2)}{u_1u_2}du_1du_2\Big|.
$$
Evidently, $I_1\rightarrow 0$ due to the continuity property of $g(x_1,x_2)$ when $\delta_1$ and $\delta_2$ converge to zero. As $n\rightarrow\infty$, for $I_2$ we have
$$
I_2\leq M|g(x_1,x_2)|\int\limits_{|u|>\delta/h}|
K'(u)|du\int_{-\infty}^{+\infty}
|K'(u)|du\rightarrow 0,
$$
and for $I_5$ by (\ref{f64}) we obtain
\begin{eqnarray*}
&&I_5\leq(\delta_1\delta_2)^{-1}\sup\limits_{|u_1|>\delta_1/h}|u_1K'(u_1)|\sup\limits_{|u_2|>\delta_2/h}|u_2K'(u_2)|
\\
&&\qquad\qquad\times\int_{x_1-a_r}^{x_1-a_l}\int_{x_2-a_r-\varepsilon}
^{x_2-a_l+\varepsilon}|g(x_1-u_1,x_2-u_2)|du_1du_2\rightarrow 0.
\end{eqnarray*}
Consider $I_3$. Similar to $I_5$,
$$
I_3\leq \delta_2^{-1}\sup\limits_{|u_2|>\delta_2/h}|u_2K'(u_2)|\int_{|u_1|\leq\delta_1/h}\int_{x_2-a_r-\varepsilon}
^{x_2-a_l+\varepsilon}|K'(u_1)g(x_1-u_1h,x_2-u_2)|du_1du_2.
$$
While, as $n\rightarrow\infty$ and then $\delta_1\rightarrow 0$, by the dominated convergence theorem
$$
h^{-1}\int_{|u_1|\leq\delta_1}|K'(\frac{u_1}{h})|\int_{x_2-a_r-\varepsilon}
^{x_2-a_l+\varepsilon}|(g(x_1-u_1,x_2-u_2)-g(x_1,x_2-u_2))|du_1du_2\rightarrow 0.
$$
From (\ref{f64}) we then see that $I_3\rightarrow 0$. One may
similarly prove that $I_4$ converges to zero as well. We summarize
the above that (\ref{f70}) converges to zero as $n\rightarrow\infty$
first and then both $\delta_1\rightarrow0$ and
$\delta_2\rightarrow0$. In addition, apparently,
\begin{eqnarray}
g(x_1,x_2)h^{-1}\int_{x_1-a_r}^{x_1-a_l}
K'(\frac{u}{h})du&=&g(x_1,x_2)\int_{\frac{x_1-a_l}{h}}^{\frac{x_1-a_r}{h}}
K'(u)du\label{f78}\\
&=&g(x_1,x_2)K(u)\Big|_{\frac{x_1-a_l}{h}}^{\frac{x_1-a_r}{h}}\rightarrow 0. \nonumber
\end{eqnarray}
Thus (\ref{e7}) is proved. \qed

We are now in a position to apply Lemma \ref{lem4} to (\ref{f71}).
It follows from (\ref{g31}) that $\underline{m}(x_1)\neq
\underline{m}(x_2)$ and $\underline{m}(x_1)\neq
\overline{\underline{m}}(x_2)$ whenever $x_1\neq x_2$. Also, note
(5.1) in \cite{b2}. Therefore
$g(x_1,x_2)=\ln\big( |\underline{m}(x_1)-\overline{\underline{m}}(x_2)|
|\underline{m}(x_1)-\underline{m}(x_2)|^{-1}\big)$ is continuous in $x_1$
and $x_2$. Furthermore, it is straightforward to show that $ \ln
\big(1+M\big((x_1-x_2)-(u_1-u_2)\big)^{-1}\big) $ for $u_1,u_2\in
[a_l-\varepsilon,a_r+\varepsilon]$ is Lebesgue integrable and $\ln
\big(1+M\big((x_1-x_2)-u_1\big)^{-1} \big)$ for $u_2\in
[a_l-\varepsilon,a_r+\varepsilon]$ is Lebesgue integrable. Thus, in
view of inequalities similar to (\ref{e8})-(\ref{f73}) and applying
(\ref{e7}) we have
\begin{equation}\label{f67}
\frac{1}{h^2}\int_{x_1-a_r}^{x_1-a_l}\int_{x_2-a_r-\varepsilon}
^{x_2-a_l+\varepsilon}
K'(\frac{u_1}{h})K'(\frac{u_2}{h})\ln\Big|\frac{\underline{m}(x_1-u_1)-\overline{\underline{m}}(x_2-u_2)}
{\underline{m}(x_1-u_1)-\underline{m}(x_2-u_2)}\Big|du_1du_2\rightarrow
0,
\end{equation}
which is the limit of (\ref{e4}) due to (\ref{f71}) when $x_1\neq
x_2$.

 When $x_1=x_2=x$ taking
$g(x_1,x_2)=\ln |\underline{m}(x)-\overline{\underline{m}}(x) |$
and applying (\ref{e7}) we obtain
\begin{equation}\label{f67}
\frac{1}{h^2}\int_{x-a_r}^{x-a_l}\int_{x-a_r-\varepsilon}
^{x-a_l+\varepsilon}
K'(\frac{u_1}{h})K'(\frac{u_2}{h})\ln\big|\underline{m}(x-u_1)-\overline{\underline{m}}(x-u_2)\big|du_1du_2\rightarrow 0.
\end{equation}
Here we keep in mind that the boundary points are not considered when
investigating the case $x_1=x_2=x$. Consider next
\begin{equation}\label{a12}
\frac{1}{h^2}\int_{x-b}^{x-a}
K'(\frac{u_1}{h})\int_{x-b-\varepsilon} ^{x-a+\varepsilon}
K'(\frac{u_2}{h})\ln\big|\underline{m}(x-u_1)-\underline{m}(x-u_2)\big|du_1du_2.
\end{equation}
By complex Roller's theorem we have
\begin{eqnarray}
&&\ln\big|\underline{m}(x-u_1)-\underline{m}(x-u_2)\big|\non&=&2^{-1}\ln
\big((u_1-u_2)^2[|\underline{m}'_r(x-u_3)|^2+|\underline{m}'_i(x-u_4)|^2]\big)\non
&=&2^{-1}\ln (u_1-u_2)^2
+2^{-1}g_{ri}(x-u_1,x-u_2),\label{f65}
\end{eqnarray}
where $g_{r}(x-u_1,x-u_2)=\ln
\big(|\underline{m}'_r(t_1(x-u_1)+(1-t_1)(x-u_2))|^2+|\underline{m}'_i(t_2(x-u_1)+(1-t_2)(x-u_2))|^2\big),$
$u_3=t_1u_1+(1-t_1)u_2,\ u_4=t_2u_1+(1-t_2)u_2$ and $t_1,t_2\in
(0,1)$. It follows from inequalities for $\underline{m}(x)$ similar
to (\ref{f58}) that
$$
\big|\int_{x-b}^{x-a} \int_{x-b-\varepsilon} ^{x-a+\varepsilon}
\ln |\underline{m}(x-u_1)-\underline{m}(x-u_2) |du_1du_2\big|<\infty.
$$
This, together with (\ref{f65}), ensures that
$$
\big|\int_{x-b}^{x-a} \int_{x-b-\varepsilon} ^{x-a+\varepsilon}
g_{r}(x-u_1,x-u_2)du_1du_2\big|<\infty.
$$
Similarly, one may check the remaining conditions in Lemma
\ref{lem4}. Therefore, using Lemma \ref{lem4} with
$g(x_1,x_2)=\ln|\underline{m}'(x)|^2$ gives
\begin{equation}\label{f68}
\frac{1}{h^2}\int_{x-b}^{x-a}
K'(\frac{u_1}{h})\int_{x-b-\varepsilon} ^{x-a+\varepsilon}
K'(\frac{u_2}{h})g_{r}(x-u_1,x-u_2)du_1du_2\rightarrow 0.
\end{equation}
We then conclude from (\ref{f65}), (\ref{f68}) and (\ref{f75}) that
$$
(\ref{a12})=\frac{1}{2}\frac{1}{h^2}\int_{x-b}^{x-a}
K'(\frac{u_1}{h})\int_{x-b-\varepsilon} ^{x-a+\varepsilon}
K'(\frac{u_2}{h})\ln (u_1-u_2)^2du_1du_2+o(1)
$$
\begin{equation}\label{a13}
\rightarrow\frac{1}{2}\int_{-\infty}^{+\infty}
\int_{-\infty}^{+\infty}K'(u_1) K'(u_2)\ln (u_1-u_2)^2du_1du_2.
\end{equation}
which is minus the limit of (\ref{e4}) due to (\ref{f67}) and (\ref{f71})
when $x_1=x_2$.

\noindent
{\bf Limit of (\ref{f50})}.  From an expression similar to (\ref{h31})
we obtain
$$
\frac{d}{dz}\underline{m}_n^0(z)=(\underline{m}_n^0(z))^2\big(1-c\frac{(\underline{m}_n^0(z))^2}{(1+\underline{m}_n^0(z))^2}\big)^{-1}.
$$
It follows that (\ref{f50}) becomes
$$
\frac{1}{4\pi i}\oint K(\frac{x-z}{h})\frac{d}{dz}\ln
\Big[1-c\frac{(\underline{m}_n^0(z))^2}{(1+\underline{m}_n^0(z))^2}\Big]dz
$$
\begin{equation}\label{f52}
=\frac{1}{4\pi hi}\oint K'(\frac{x-z}{h})\ln
\Big[1-c\frac{(\underline{m}_n^0(z))^2}{(1+\underline{m}_n^0(z))^2}\Big]dz.
\end{equation}
In view of (\ref{g16}) and (\ref{m12}) we see that the integrals on the two vertical lines in
(\ref{f52}) are bounded by $Mv\ln v^{-1}$, which converges to zero as $v\to 0$.
The integrals on the two horizontal lines are equal to
\begin{eqnarray}
&&\frac{1}{2\pi h}\int K_i'(\frac{x-z}{h})\ln \Big
|1-c\frac{(\underline{m}_n^0(z))^2}{(1+\underline{m}_n^0(z))^2}\Big
|du\label{f54}
\\ &+& \frac{1}{2\pi h}\int K_r'(\frac{x-z}{h})\arg
\Big
[1-c\frac{(\underline{m}_n^0(z))^2}{(1+\underline{m}_n^0(z))^2}\Big
]du.\label{f55}
\end{eqnarray}
By (\ref{g16}), (\ref{m12}) and (\ref{e5}), (\ref{f54}) is bounded by $Mv\ln v^{-1}$, converging to zero. It follows from (\ref{f66})
that
$$
\frac{(\underline{m}_n^0(z_n))^2}{(1+\underline{m}_n^0(z_n))^2}-\frac{(\underline{m}(u_n))^2}{(1+\underline{m}(u_n))^2}\rightarrow0.
$$
We then conclude from the dominated convergence theorem that
$$
\int K_r'(z)\Big[\arg \Big
(1-c\frac{(\underline{m}_n^0(z_n))^2}{(1+\underline{m}_n^0(z_n))^2}\Big)-\arg \Big
(1-c\frac{(\underline{m}(u_n))^2}{(1+\underline{m}(u_n))^2}\Big)\Big]du\rightarrow0.
$$
Moreover, by (\ref{f72}) we obtain
$$
\int (K_r'(z)-K_r'(u))
\arg \Big
[1-c\frac{(\underline{m}(u_n))^2}{(1+\underline{m}(u_n))^2}\Big
]du\rightarrow0.
$$
By (\ref{f78}) and Theorem 1A in \cite{p1} (replacing $K(x)$ there by $K'(x)$) we see that
$$
\int K_r'(u)
\arg \Big
[1-c\frac{(\underline{m}(u_n))^2}{(1+\underline{m}(u_n))^2}\Big
]du\rightarrow0.
$$
Summarizing the above yields that (\ref{f50}) converges to zero.

\noindent
{\bf Limits of (\ref{g43}) and (\ref{g44})}. Repeating the argument leading to (\ref{f71}) yields that (\ref{g43}) becomes
\begin{equation}\label{g45}\frac{1}{h^2\ln h^{-1}}\int_{x_1-a_r}^{x_1-a_l}\int_{x_2-a_r-\varepsilon}
^{x_2-a_l+\varepsilon}
K(\frac{u_1}{h})K(\frac{u_2}{h})\ln\Big|\frac{\underline{m}(x_1-u_1)-\overline{\underline{m}}(x_2-u_2)}
{\underline{m}(x_1-u_1)-\underline{m}(x_2-u_2)}\Big|du_1du_2+o(1).
\end{equation}
The argument of (\ref{f70}) in Lemma \ref{lem4} indeed also, together with (\ref{a26}),  gives
\begin{equation}\label{g46}
\frac{1}{h^2}\int_{x_1-a_r}^{x_1-a_l}\int_{x_2-a_r-\varepsilon}
^{x_2-a_l+\varepsilon}
K(\frac{u_1}{h})K(\frac{u_2}{h})g(x_1-u_1,x_2-u_2)du_1du_2-g(x_1,x_2)\rightarrow 0.
\end{equation}
This ensures that (\ref{g45}) converges to zero when $x_1\neq x_2$. When $x_1= x_2=x$, by (\ref{g46}) we have
$$
\frac{1}{h^2\ln h^{-1}}\int_{x_1-a_r}^{x_1-a_l}\int_{x_2-a_r-\varepsilon}
^{x_2-a_l+\varepsilon}
K(\frac{u_1}{h})K(\frac{u_2}{h})\ln\Big|\underline{m}(x_1-u_1)-\overline{\underline{m}}(x_2-u_2)\Big|du_1du_2\rightarrow 0.
$$
Applying (\ref{g46}) and replacing $K'(x)$ in (\ref{f75}), (\ref{f65}), (\ref{f68}) and (\ref{a13}) by $K(x)$, we can prove that
$$
-\frac{1}{h^2\ln h^{-1}}\int_{x_1-a_r}^{x_1-a_l}\int_{x_2-a_r-\varepsilon}
^{x_2-a_l+\varepsilon}
K(\frac{u_1}{h})K(\frac{u_2}{h})\ln\Big|\underline{m}(x_1-u_1)-\underline{m}(x_2-u_2)\Big|du_1du_2\rightarrow 1.
$$
Moreover, from the conditions on $h$ one may show that
$$
\frac{\ln n}{\ln h^{-1}}\rightarrow 2.
$$

Checking on the argument of (\ref{f50}) and replacing $K'(x)$ there
with $K(x)$, along with(\ref{g47}), we have
$
(\ref{g44})\rightarrow 0.
$
Thus the proof is complete.

\end{document}